\documentclass[12pt]{article}
\usepackage[top=2.5cm,left=2cm,right=2cm]{geometry}

\usepackage{hyperref}
\title{%Dirichlet Problem for Schr\"odinger Equations with Carleson measure Coefficinets and Reverse H\"older Potentials
Solvability of boundary value problem for Schr\"odinger Equations with Reverse H\"older Potentials on  $L^p$ and endpoint spaces \footnote{MSC: primary 35J25; secondary 35J10, 42B37}}
\author{Xiao Botian, Tang Lin}
\date{}

\usepackage{appendix}
\usepackage{amsfonts}
\usepackage{amsmath}
\usepackage{amsthm}
\usepackage{amssymb}
\usepackage{esint}
\newtheorem{definition}{Definition}[section]
\newtheorem{theorem}{Theorem}[section]
\newtheorem{lemma}[theorem]{Lemma}

\newtheorem{remark}{Remark}[section]

\newcommand{\reals}{\mathbb{R}}

\newcommand{\rn}{\mathbb{R}^n}
\newcommand{\rr}{\mathbb{R}^{n+1}}

\newcommand{\rnp}{\mathbb{R}^{n+1}}

\newcommand{\lnorm}{\left\|}
\newcommand{\rnorm}{\right\|}
\newcommand{\labs}{\left|}
\newcommand{\rabs}{\right|}
\newcommand{\defeq}{\overset{\textup{def}}{=}}
\newcommand{\supp}{\textup{supp}\,}

\newcommand{\brackets}[1]{\left(#1\right)}
\newcommand{\absvalue}[1]{\left|#1\right|}
\newcommand{\rref}[1]{(\ref{#1})}

\newcommand{\nbltg}{\nabla_{\tan}}
\newcommand{\DIV}{\,\textup{div}\,}
\newcommand{\tilN}{\tilde{\mathcal{N}}}
\newcommand{\calS}{\mathcal{S}}
\newcommand{\calN}{\mathcal{N}}
\newcommand{\average}{\textup{Avg}\,}
\newcommand{\Avg}[1]{\underset{#1}{\average}}
\begin{document}
\allowdisplaybreaks[4]
\maketitle
\begin{abstract}
    %In this article we discuss the solvability on $\Omega$ a domain above a Lipschitz graph in $\rnp$, of the Dirichlet boundary value problem  of elliptic Schr\"odinger-type equation $-\DIV(A(x,t)\nabla u(x,t))+V(x,t)u(x,t)=0$ with boundary data $\restrict{u}_{\partial\Omega}=f\in L^p$ for $p>1$, with the assumption on the coefficient matrix $A$ that $|\nabla A|^2 t$ is a Carleson measure on $\omega$ and $V\in \mathcal{B}_q$ a non-negative funuction of Reverse H\"older class with $q>\frac{n}{2}$. We consider the $L^p$ norm equivalence of the $p$-adapted square function $\calS_p(u)$ and the nontangential maximal function $\tilN(u)$, and derive the $L^p$ norm controlling relation of $\tilN(u)$ by the data $f$ via a similar inequality.
    In this paper we discuss the solvability of the Neumann and Regularity boundary value problem  of elliptic Schr\"odinger-type equation $-\DIV(A(x)\nabla u(x,t))+V(x)u(x,t)=0$ with bounded measurable uniformly elliptic coefficinets $A(x)$ independent of $t$ and $V$ in Reverse H\"older class $\mathcal{B}_q$, and Neumann boundary data $\partial_{\nu_A}u(x,0)=f(x)\in  H^p_{\mathcal{L}}(\rn)$, or Regularity data $u(x,0)=g\in H^{1,p}_V(\rn)$, utilizing the method of layer potential. The solvability is proved when $A$ is a small $L^\infty$ perturbation of a matrix satisfying De Giorgi-Nash-Moser bounds. For $1-\varepsilon^\prime<p\le 1$ we give a new molecular decomposition of $H^p_\mathcal{L}$ and proved the solvability of Neumann and Regularity problem by establishing the Rellich inequalities on the boundary. Besides we also give the Campanato norm estimate of the double layer potential related to the Dirichlet problem with boundary data in certain Campanato-type spaces.\\
    \noindent{\textbf{Keywords:} Schr\"odinger equation, reverse H\"older potential, Neumann problem, Regularity problem, layer potential}
\end{abstract}

\section{Introduction}
Let $n\ge 3$, and write $(x,t)\in\rn\times \reals$ for an arbitrary point in the space. Consider the Schr\"odinger equation in the following form:
\begin{equation}\label{mainequation}
    \mathcal{L}u=-\DIV(A(x)\nabla u(x,t))+V(x)u(x,t)=0
\end{equation}
with $A(x)\in M^{(n+1)\times (n+1)}(\rn)$ being a bounded measurable coefficient matrix satisfying uniform elliptic condition 
\begin{align*}
    &\frac{1}{\Lambda}|\xi|^2\le \Re(\xi^\dag A\xi), &|\xi^\dag A \eta|\le \Lambda|\xi||\eta|,
\end{align*}
and $V(x)\ge 0$ being the potential function, both independent of $t$. Moreover we assume that $V\in \mathcal{B}_q(\rn)$ being a reverse H\"older potential with $q>\frac{n}{2}$. Such type of equation is a generalization of the classical Schr\"odinger equation $-\Delta u+Vu=0$, whose boundary value problems were widely studied in \cite{NeumannSchShen,NeumannSchTao,RegSchrodingerTao}. The equation \rref{mainequation} can arise from several different problems, such as solving the classical Schr\"odinger equation on a domain with Lipschitz boundary. 

 Studies of boundary value problems associated with \rref{mainequation} mainly focus on Neumann problems $(N)_p $ and Regularity problems $(R)_p$, as Dirichlet problems for positive potential are relatively easy to be solved by comparison with the elliptic equation $-\DIV(A\nabla u)=0$. The problems are formulated by
 \begin{equation*}
     \begin{array}{cc}
         (N)_p\left\{\begin{aligned}
             &-\DIV(A(x)\nabla u)+V(x)u=0,\\&\partial_\nu u(\cdot,0)=g\in H_{\mathcal{L}}^p,\\&\left\|\tilN(\nabla_V u)\right\|_p<\infty,
         \end{aligned}\right. &  (R)_p\left\{\begin{aligned}
             &-\DIV(A(x)\nabla u)+V(x)u=0,\\&u(\cdot,0)=F\in H_V^{1,p},\\&\left\|\tilN(\nabla_V u)\right\|_p<\infty,
         \end{aligned}\right.
     \end{array}
 \end{equation*} 
 where $\nu(x) =-\mathbf{e}_{n+1}\cdot A(x) $ is the outer normal vector associated with matrix $A$ and $\nu^\ast$ with $A^\ast$, the conjugate matrix of $A$. Note that the conormal derivative $\partial_\nu=\partial_\nu^+$ and $\partial_\nu^-$ are defined in the variational sense, namely
 \begin{align*}
            \int_{\rnp_\pm}(A\nabla u\nabla\Phi+Vu\Phi)dxdt=\int_{\rn}\partial_\nu^\pm u\cdot\varphi dx
 \end{align*} for every solution $u$ to \rref{mainequation} and $\varphi\in C_c^\infty(\rn)$ and $\Phi(x,t)\in C_c^\infty(\rnp)$ such that $\Phi(x,0)=\varphi(x)$. 
 $\nabla _Vu=\left(\nabla u,V^\frac{1}{2}u\right)$ denotes the total derivative associated with Schr\"odinger operator $\mathcal{L}$. The nontangential maximal function is defined by taking average on Whitney cubes, as
 \begin{equation*}
     \tilN(w)(x)=\sup_{\left|x^\prime -x\right|<\gamma t^\prime  }\brackets{\fint_{Q_{\theta t^\prime}\left(x^\prime,t^\prime\right)} |w(\xi,\tau)|^2 d\xi d\tau}^\frac{1}{2}
 \end{equation*}
where $\gamma>0$ and $0<\theta<1$ are suitable fixed parameters, and we take $\gamma=1$ and $\theta=\frac{1}{4}$ throughout the paper for convenience. The spaces $H_{\mathcal{L}}^p$ and $H^{1,p}_V$ are defined in Section \ref{endpointspace}. The boundary value is interpreted as convergence non-tangentially when $p\ge 1$, and convergence in the distribution sense when $p<1$.

 Given the existence of fundamental solution $\Gamma(x,t|\xi ,\tau)$, layer potentials then become a useful tool to formulate the solution of \rref{mainequation}. As is widely known, the single layer potential 
 \begin{equation*}
     \calS f(x,t)=\int_{\rn}\Gamma(x,t|\xi,0)f(\xi)d\xi
 \end{equation*}
and the double layer potential $(t\ne 0)$
\begin{equation*}
    \mathcal{D} f(x,t)=\int_{\rn}\overline{\partial_{\nu^\ast,\xi}\Gamma^\ast(\xi,0|,x,t)}f(\xi)d\xi
\end{equation*}
are both solutions on both the upper and lower half space, where $\partial_{\nu^\ast}$ denotes the adjoint exterior conormal derivative.

As is customary, we then define the single- and double-layer potential operators, associated $\mathcal{L}$, in the upper and lower half-spaces, $\rr_+$ and $\rr_-$, respectively, by
 \begin{align*}
     \calS^\pm f(x,t)=&\int_{\rn}\Gamma(x,t|\xi,0)f(\xi)d\xi,\quad (x,t)\in\rr_\pm,\\
    \mathcal{D}^\pm f(x,t)=&\int_{\rn}\overline{\partial_{\nu^\ast,\xi}^\pm\Gamma^\ast(\xi,0|x,t)}f(\xi)d\xi,\quad (x,t)\in\rr_\pm,
\end{align*}
and
$$\tilde{\mathcal{K}}f=-\frac I 2+\partial_{\nu}\calS^+f(\cdot,0)=\frac I 2-\partial_{\nu}^-\calS^-f(\cdot,0).$$

For elliptic equation with non-smooth coefficients satisfying the De Giorgi-Nash-Moser local boundedness property, layer potential method produces positive results when the coefficient is in real symmetric, up to a small perturbation.

 By block form we refer to matrices like \begin{equation*}
     A(x)=\begin{pmatrix}
         A_{\parallel\parallel}(x) & 0 \\
         0 & A_{\perp \perp}(x)
     \end{pmatrix}
 \end{equation*}
 where  $A_{\parallel\parallel}(x)\in M^{(n-1)\times (n-1)}(\reals)$, equivalently saying, the term $\partial^2_t u$  can be separated from other second order terms. $L^2$ Schr\"odinger boundary problems with this kind of coefficients and $A_{\perp\perp}=1$ can be solved by referring to the Kato problem, see Bailey \cite{BaileyPotential}. For block or complex Hermitian coefficients, $L^2$ boundary value problem is solved by first-order method, provided by Morris-Turner \cite{MorrisTurner}, where small $L^\infty$ perturbations of second-order coefficients were studied as well; see also \cite{CriticalPerturbationHofmann1,CriticalPerturbationHofmann2}. 

 Research on $L^p$ boundary value problems of second-order linear elliptic equations with non-smooth coefficients can be dated back to Dahlberg-Kenig \cite{DahlbergKenigLaplace}, where $L^p$ and $H^p$ Neumann and Regularity problem of Laplace equation on Lipschitz domains were proved to be well-posed. Then in Kenig-Pipher \cite{KenigPipher} Neumann and Regularity problems on unit ball were considered, and their arguments were based on estimates on the harmonic measure and Green's function. Alfonseca-Angles-Axselsson-Hofmann-Kim \cite{AAAHK} used the layer potential method to solve $L^2$ Neumann and Regularity problem, with the assumption that $A(x)$ is independent of $t$ with $A$ being complex constant, real symmetric or in the block form. With the help of layer potential, the solvability is converted into the invertibility of boundary layer potential operators as a type of singular integral operators, which can be estimated by analyzing their kernel functions. In \cite{AAAHK}, perturbations of coefficients were also discussed, and the $L^2$ boundedness of square function estimate were proved to be stable under  $L^\infty$ perturbation. Hofmann-Mitrea-Morris \cite{MethodOfLayerPotential} discussed the $H^p$ estimations of layer potential operators and solved the $L^p$ Neumann and Regularity problem within $1-\varepsilon^\prime<p<2+\varepsilon$ and Dirichlet problem in $\Lambda^\beta$ within $0\le \beta <\alpha$ for some small $\varepsilon, \varepsilon^\prime $ and $\alpha$. 
 
 Other methods such as first order method are mainly used for the study of elliptic systems, with examples such as \cite{LayerPotentialsBeyond}. Their arguments can also be generalized for $L^p$ situation, see Auscher-Mourgoglou \cite{RepresentationFirstOrder} and Auscher-Stahlhut \cite{FunctionalFirstOrderDirac}. These methods influenced \cite{MorrisTurner}, where the authors constructed a Dirac-type operator $DB$ associated with $\mathcal{L}$ that admits a bounded functional calculus. Ros\'en \cite{LayerPotentialsBeyond} also provided a different perspective to study layer potential operators, following which we can get 
 \begin{align*}
     \hat{A}\nabla_V\calS f=&\brackets{ e^{-tDB}\chi_{+}(DB)\begin{pmatrix}
          f  \\
          0 
     \end{pmatrix}}_\perp,\\
     \mathcal{D} f=&\brackets{ e^{-tBD}\chi_{+}(BD)\begin{pmatrix}
          f  \\
          0 
     \end{pmatrix}}_\perp,
 \end{align*}
 as a corollary we have
 \begin{equation}
 \sup_{t>0}\brackets{\|\nabla_V\calS f(\cdot,t)\|_2+\|\mathcal{D}f(\cdot,t)\|_2}\le C\|f\|_2.\label{L^2 boundedness of slices} 
 \end{equation}
 Several other authors such as Borts-Hofmann-Luna Garcia-Mayboroda-Poggi \cite{CriticalPerturbationHofmann1,CriticalPerturbationHofmann2} also presented $L^p \,({2-\varepsilon^\prime<p<2+\varepsilon})$ well-posedness of boundary value problems for equations with lower-order coefficients in $L^{n}$ and $L^\frac{n}{2}$ via abstract layer potential operators and square function estimates.

For Schr\"odinger equation of non-smooth coefficients, it still remains a question: Is it possible to give its layer potential solution to $(N)_p$ and $(R)_p$,  what conditions should the coefficient $A$ satisfy, and in what range should the index $p$ take? 

To handle the solvability when $1-\varepsilon^\prime<p<2$, we need to set up the boundedness and invertibility for the boundary layer potential operators at endpoint spaces $H^1_\mathcal{L}$. Fortunately, boundedness on $H^1_\mathcal{L}$ of various singular integral operators derived from Classical Schr\"odinger operator $-\Delta+V$, such as Riesz transform and $(-\Delta+V)^{it}$, were discussed widely in \cite{DongHardy,CZParabolicSchrodingerTrong,NonCancelledAtom}, and it is new and very interesting for us to investigate the boundedness of boundary layer operators associated with the generalized Schr\"odinger operator with variable coefficient $-\DIV(A\nabla)+V$, which lacks smoothness in the $x$ direction, especially on the space $H^p_\mathcal{L}$ when $\frac{n}{n+1}<p<1$.

A great difference between Schr\"odinger equation \rref{mainequation} and elliptic equation without lower order terms $-\DIV(A \nabla u)=0$, is that $u-a$ is no longer a solution to the equation if the constant $a\ne 0$. Nevertheless, the usage of local $L^p$ Fefferman-Phong inequality given in \cite{BenAli,MorrisTurner} may assist us to overcome this difficulty.

Now we can introduce the main result of this paper.

\begin{theorem}\label{mainthm}
    Given $A_0(x)$ a uniformly elliptic matrix and the potential $V\in \mathcal{B}_q(\rn)$ with  $q\ge\frac {n+1}2$. 
    
    Suppose $A_0$ is real symmetric, or $A_0$ is of  the block form and any all the weak solutions to both $\DIV(A_0\nabla u)=0$ and $\DIV(A_0^\ast\nabla u)=0$ possess De Giorgi-Nash-Moser property(see\rref{boundednesselliptic} and \rref{holderelliptic}).
    
    Then there exists an $\epsilon_0>0$ sufficiently small depending on dimension, ellipticity constant, DG-N-M constants and $[\![V]\!]_q$, such that if the coefficient $A(x)=A_0(x)+\epsilon_0 \tilde{A}(x)$ where $ \lnorm \tilde{A}(x)\rnorm_\infty \le 1$, and there exists $0<\varepsilon^\prime <\frac{1}{n+1}$ depending on dimension, ellipticity constant and $[\![V]\!]_q$ such that $\left(N\right)_p $ and $(R)_p$ are respectively uniquely solvable by layer potential in $H^p_\mathcal{L}$ and $H^{1,p}_V$ for $1-\varepsilon^\prime <p\le 2$.
\end{theorem}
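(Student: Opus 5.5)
The approach is the classical layer potential scheme of Hofmann--Mitrea--Morris \cite{MethodOfLayerPotential}, adapted to the Schr\"odinger setting. We look for the solution of $(N)_p$ as $u=\calS^+f$, so that $\partial_\nu u(\cdot,0)=(\frac I2+\tilde{\mathcal K})f$. For $(R)_p$ we take $u=\calS^+ f$ with $\calS f(\cdot,0)=F$. Solvability and uniqueness in the stated range then reduce to three facts:
\begin{enumerate}
\item[(i)] the bounds $\|\tilN(\nabla_V\calS^\pm f)\|_p\lesssim\|f\|_{H^p_\mathcal{L}}$ hold, together with the jump relations;
\item[(ii)] the boundary operators $\pm\frac I2+\tilde{\mathcal K}:H^p_\mathcal{L}\to H^p_\mathcal{L}$ and $\calS(\cdot,0):H^p_\mathcal{L}\to H^{1,p}_V$ are bounded and invertible for $1-\varepsilon'<p\le2$;
\item[(iii)] the representation of any solution with $\|\tilN(\nabla_V u)\|_p<\infty$ via Green's formula gives uniqueness.
\end{enumerate}

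\textbf{Step 1: the case $p=2$.} For $A_0$ real symmetric or of block form, the $L^2$ results of \cite{MorrisTurner} apply through the Dirac operator $DB$ and its bounded functional calculus. They give \rref{L^2 boundedness of slices}, square function bounds, and invertibility of $\pm\frac I2+\tilde{\mathcal K}$ and of $\calS(\cdot,0):L^2\to H^{1,2}_V$. For real symmetric $A_0$, invertibility comes from the Rellich identity $\|\partial_\nu \calS f\|_2\approx\|\nabla_V\calS f(\cdot,0)\|_2$ combined with a continuity argument. Stability of the functional calculus under small $L^\infty$ perturbations (analytic dependence of $\chi_+(DB)$ on $B$) then transfers boundedness and invertibility to $A=A_0+\epsilon_0\tilde A$, provided $\epsilon_0$ is small.

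\textbf{Step 2: the case $p=1$.} Here I prove boundedness from $H^1_\mathcal{L}$ to $L^1$ of $\tilN(\nabla_V\calS f)$, and of $\tilde{\mathcal K}$ on $H^1_\mathcal{L}$. Because $A(x)$ is independent of $t$, the De Giorgi--Nash--Moser property \rref{boundednesselliptic}, \rref{holderelliptic} for $A_0$ persists for $t$-independent small perturbations. This gives H\"older and pointwise bounds on $\Gamma$, and the reverse H\"older hypothesis $V\in\mathcal{B}_q$ with $q\ge\frac{n+1}2$ adds the extra decay $(1+|x-y|/\rho(x))^{-N}$ in terms of the critical radius $\rho$. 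I would then use the molecular decomposition of $H^p_\mathcal{L}$. For atoms with cancellation (balls with radius $r<\rho$), the H\"older continuity of the kernel gives off-diagonal decay. For the non-cancelled atoms supported on balls of radius $\sim\rho$, the decay supplied by $V$, via the local Fefferman--Phong inequality of \cite{BenAli,MorrisTurner}, plays the role of the cancellation. On the atom's dilate, the $L^2$ estimate from Step 1 together with H\"older's inequality handles the local part.

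For $p<1$, I interpolate with the $L^2$ results. The H\"older exponent $\alpha$ of the kernel yields molecule bounds down to $p>\frac{n}{n+\alpha}$, which fixes $\varepsilon'$.

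\textbf{Step 3: invertibility at $p\le1$ (the main obstacle).} This requires a Rellich-type estimate $\|\nabla_V\calS f(\cdot,0)\|_{H^p}\approx\|(\pm\frac I2+\tilde{\mathcal K})f\|_{H^p_\mathcal{L}}$ for atoms. I would prove it on each molecule by localizing the $L^2$ Rellich identity on a Carleson box. The error terms are controlled by the decay from Step 2 and by the Fefferman--Phong inequality, which is needed because constants are not solutions. This gives an a priori lower bound on $H^1_\mathcal{L}$. Surjectivity follows from a method-of-continuity argument in $\epsilon_0$ starting from the $A_0$ case, and the range $1-\varepsilon'<p<1$ follows by the same molecular argument.

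The intermediate range $1<p<2$ follows by complex interpolation between $H^{1-\varepsilon'}_\mathcal{L}$ and $L^2$. Invertibility extends to this range via \v{S}ne\u{\i}berg's stability theorem, possibly after shrinking $\varepsilon'$. Finally, uniqueness follows from the Green formula representation $u=\calS(\partial_\nu u)-\mathcal D(u(\cdot,0))$, applied on truncated domains and justified by $\tilN(\nabla_V u)\in L^p$ and the decay of $\Gamma$.
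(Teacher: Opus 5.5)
\textbf{Main gap: Step 3.} Your overall scheme matches the paper's: layer potentials, the $L^2$ theory of \cite{MorrisTurner}, approximate molecules, $H^p$ Rellich estimates, and perturbation in $\epsilon_0$. But Step 3, which is the heart of the theorem, does not work as written.

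A Rellich-type \emph{lower} bound $\|(\pm\frac I2+\tilde{\mathcal K})f\|_{H^p_\mathcal{L}}\gtrsim\|\nabla_V\calS f(\cdot,0)\|$ cannot be proved for atoms $f$ and then summed. Lower bounds do not pass through an atomic decomposition of the density $f$, because the images $(\pm\frac I2+\tilde{\mathcal K})a_j$ can cancel each other. The decomposition has to be made on the \emph{boundary data}. You must show that the $L^2$ solution $u$ whose Neumann data is an atom $a$ (resp.\ whose Dirichlet data is an $H^{1,p}_V$ atom) is a molecule.

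The decay from your Step 2 does not help here. That solution is $\calS^+((\frac I2+\tilde{\mathcal K})^{-1}a)$, and $(\frac I2+\tilde{\mathcal K})^{-1}$ is nonlocal. Likewise, localizing an $L^2$ Rellich inequality to a box produces side terms that again require decay of this $u$.

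The paper obtains this decay as follows.
\begin{itemize}
\item It uses the boundary pointwise and H\"older bounds \rref{estiamteGreenNeumann}--\rref{differenceGreenNeumann} for the Neumann and Green functions, writing $u=\int N(\cdot|\xi,0)a(\xi)\,d\xi$ (resp.\ $u=\int\partial_{\nu^\ast,\xi}G(\cdot|\xi,0)a(\xi)\,d\xi$).
\item It applies the $L^2$ Rellich estimate on the cone domains $\Omega_R$ and averages in $R\in[2^{k-2}r,2^{k-1}r]$.
\item Boundary Caccioppoli then gives $\int_{\Theta_{2^kr}}\tilN(\nabla_Vu)^2\lesssim 2^{-2k(\alpha+n-\frac np)}(2^kr)^{n-\frac{2n}p}$.
\item In the regularity direction, it checks the approximate cancellation of $\partial_\nu u$ through $\int_{\rn}\partial_\nu u=\int_{\rnp_+}Vu$. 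This is where $q\ge\frac{n+1}2$ enters.
\end{itemize}
Those Green/Neumann estimates (reflection for block form; comparison with $G_0$ and harmonic measure for real symmetric $A_0$) are exactly where the ``real symmetric or block'' hypothesis is used. Your proposal never uses that hypothesis beyond $p=2$, which signals that the key input is missing.

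Surjectivity is also circular as stated. Continuity in $\epsilon_0$ ``starting from the $A_0$ case'' presupposes invertibility for $A_0$ on $H^p_\mathcal{L}$ with $p\le1$, which is what needs proving. The paper anchors the argument at $-\Delta+V$ via the path $\mu A_0+(1-\mu)I$, with uniform Rellich bounds along real symmetric matrices plus density of range. In the block case it uses $\tilde{\mathcal K}=0$ and $\calS(\cdot,0)=(-\DIV(A\nabla)+V)^{-1/2}$ with \cite{BaileyPotential}. It then uses $\|(\tilde{\mathcal K}_1-\tilde{\mathcal K}_2)f\|_{H^p_\mathcal{L}}\lesssim\|A_1-A_2\|_\infty\|f\|_{H^p_\mathcal{L}}$ to reach $A_0+\epsilon_0\tilde A$.

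\textbf{Further steps needing repair.}
\begin{itemize}
\item In Step 2, the difficulty is not the cancellation of the input atom but condition (iv) for the output. For a cancelled atom with $r\ll\rho(x_0)$ one has $\int_{\rn}\partial_\nu\calS a(\cdot,t)=\int_{\rn\times(t,\infty)}V\calS a\neq0$. For $p<1$ you need $|\int\partial_\nu\calS a|\lesssim (r/\rho(x_0))^{\min\{\alpha,\delta_0\}}r^{n-\frac np}$, as in Theorem \ref{H^p estimate on slices}. Fefferman--Phong does not supply this.
\item ``For $p<1$, I interpolate with the $L^2$ results'' cannot work: nothing below $p=1$ is reached by interpolating between $1$ and $2$. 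The range $p<1$ must come from the direct molecular estimates.
\item \v{S}ne\u{\i}berg's theorem only gives invertibility near exponents where it is already known, so it does not fill $1<p<2$. You need either Rellich bounds at each $p$ or interpolation of compatible inverses.
\item For $(N)_p$ with $\partial_\nu u=0$, Green's formula only expresses $u$ as a double layer potential of $u(\cdot,0)$. That gives $u=0$ only if the boundary double layer is injective on $H^{1,p}_V$, which is obtainable from its intertwining with $\tilde{\mathcal K}$ through an invertible $\calS$. The paper instead represents $u_\epsilon\eta_R$ via the Neumann function and lets $\epsilon\to0$, $R\to\infty$, using $\calN(u)\in L^{\frac{np}{n-p}}$.
\item For $(R)_p$ your formula does suffice, given $\partial_\nu u\in H^p_\mathcal{L}$ (Theorem \ref{boundaryvaluedefinition}) and injectivity of $\calS$.
\end{itemize}
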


%\ifdef

%As a special case of Theorem \ref{mainthm}, we have the following corollary for small complex perturbation near real coefficients.

%\begin{corol}
%and the potential $V\in \mathcal{B}_q(\rn)$ with  $q\ge\frac {n+1}2$.  
    
  %  Then there exists an $\epsilon_0>0$ sufficiently small depending on dimension,  ellipticity constant and $[\![V]\!]_q$, such that if the coefficient $A(x)=A_0(x)+\epsilon_0 \tilde{A}(x)$ where $\tilde{A}$ is a complex matrix satisfying $ \lnorm \tilde{A}(x)\rnorm_\infty \le 1$, and $0<\varepsilon^\prime <\frac{1}{n+1}$ depending on dimension, ellipticity constant and $[\![V]\!]_q$ such that $\left(N\right)_p $ and $(R)_p$ are respectively uniquely solvable by layer potential in $H^p_{\mathcal{L}}$ and $H^{1,p}_V$ for $1-\varepsilon^\prime <p\le 2$.
%\end{corol}
%\fi

Moreover, by substitution of variables, the above conclusions can be generalized to domains above a Lipschitz graph $\Omega=\{(x,t)\mid t>\phi(x)\}$ where $|\phi(x)-\phi(y)|\le \lambda|x-y|$, as long as the parameters $\gamma$ and $\theta$ for the maximal function are proper.

The organization of this paper is as follows. In Section \ref{section:preliminaries} we give the bounds for the fundamental solution $\Gamma(x,t|\xi,\tau)$ and the integral of its spatial derivative on dyadic annuli, and define the endpoint spaces  $H^p_{\mathcal{L}}$ and $\Lambda ^\beta_V$, as well as some properties of approximate molecules in $H^p_{\mathcal{L}}$.   Section \ref{section: estimates for layer potentials} is the main part of the article,  we give some $H^p_{\mathcal{L}}(1-\varepsilon<p\leq 1 )$ estimates of the single layer potential and its maximal function, as well as $L^p\ (2\le p< 2+\varepsilon)$ estimates for single layer potential and and $\Lambda^\beta_V$ properties for double layer potential respectively in Section \ref{layer potential when 2+epsilon} and \ref{double layer potential lambda}. In Section \ref{section: rellich estimates on the boundary} we make a further step towards the invertibility of layer potential operators at the boundary, where new Rellich type inequalities in Hardy spaces are established. Section \ref{section: solution of equation} makes the final conclusion about the existence of layer potential solution and the uniqueness of solutions with bounded maximal function norm, proving our main Theorem \ref{mainthm} and the convergence of solutions to their boundary value. Although in this paper we follow some ideas in \cite{MethodOfLayerPotential}, we need new techniques  to overcome the influences of the potential $V$.

Without special announcements, all the constants written by $C$, $c$ or other letters are dependent on the dimension $n$, and dependent on the ellipticity constant of matrix $A$ and the constant $[\![V]\!]_q$ of potential $V$ if the context refers to the equation. As the tradition shared by most papers on analysis, the constants denoted by $C$ or $c$ may take different values in different inequalities. Also we use subscripts on $C$ to indicate the parameters that the constant depends on.

\section{Preliminaries}\label{section:preliminaries}%Basic Definitions and Preliminaries}

\subsection{Consequences of Reverse H\"older Inequalities}

Let $|S|$ represent the Lebesgue measure of a point set $S$, and $\Avg{S}f=\fint_S f=\frac{1}{|S|}\int f$. Suppose $V(x)\in \mathcal{B}_q(\rn)$ with $q>1$ is a reverse H\"older potential, i.e. $V$ satisfies the inequality

\begin{equation*}
    \brackets{\fint_Q V(x)^q dx}^\frac{1}{q}\le C\fint_Q V(x) dx
\end{equation*}
for every cube $Q=Q_r(x)\subset \rn$ of radius $r$ and centered at $x$. The infimum for such constant $C$ is denoted by $[\![V]\!]_q$. By the property of reverse H\"older functions, we know $V^\delta$ are all reverse H\"older functions in $\mathcal{B}_{\frac{q}{\delta}}$ for $0<\delta<q$, with constants depending on $[\![V]\!]_q$ and $\delta$, and  if $V(x)\in \mathcal{B}_q(\rn)$ with $q>1$, then $V(x)\in \mathcal{B}_{q+\epsilon}(\rn)$  for some  $\epsilon>0$.

In the situation when $q\ge \frac{n}{2}$ we recall the well-known maximal function $m_V(x)$ in \cite{LpShen} defined by \begin{align*}
    m_V(x)=\inf\left\{\frac{1}{r}>0\Big| \int_{Q_r(x)} V(\xi)d\xi\le  r^{n-2}\right\},
\end{align*}and 
\begin{equation}\label{property of mv}
    \frac{c m_V(x)}{\left(1+|x-y|m_V(x)\right)^{\frac{\kappa_0}{\kappa_0+1}}}\le m_V(y)\le Cm_V(x)\left(1+|x-y|m_V(x)\right)^{\kappa_0},
\end{equation}
for some constant $\kappa_0>0 $,implying that $m(x)\sim m(y) $ for $x\sim y$. We also write $\rho(x)=\frac{1}{m_V(x)}$ for the critical radius function. By \cite{LpShen} we know that the function \begin{equation}
    r^2\fint_{Q_r(x)}V(\xi)d\xi\le C(rm_V(x))^{\delta_0}, \label{zeta}
\end{equation} if $r<C\rho(x)$, where $\delta_0 =2-\frac{n}{q}$.

As is known, the Fefferman-Phong inequality for $V\in \mathcal{B}_{q}(\rn)$ with $q\geq n/2$ on Lipschitz domain $\Omega$
\begin{equation}\label{globalfeffermanphong}
 \int_{\Omega} |F(x)|^2m_V(x)^2 dx \le C_{\Omega}\int_{\Omega}\brackets{|\nabla F|^2+V(x)|F(x)|^2}dx,
\end{equation}
where $C_\Omega$ means a constant dependent on the Lipschitz character of $\Omega$, and its local version for $1\leq p<\infty$  and $V^\frac{p}{2}\in \mathcal{B}_{q}(\rn)$ with $q>1$
 \begin{equation}\label{localfeffermanphong}
     \min\left\{\frac{1}{r^p},\fint_{Q_r(x_0)}V^\frac{p}{2}(\xi)d\xi \right\}\int_{Q_r(x_0)} |F(x)|^pdx\le C \int_{Q_r(x_0)} \brackets{|\nabla F|^p+V^\frac{p}{2}|F|^p}dx
 \end{equation}
 describe the relations between the $L^p$ norm of a $W^{1,p}_V$ function $F$ and its total derivative $\nabla_V F$. where the constant $C$ depends on $n, p$ and $q$.

Let  $q>s\ge 0, \alpha>0$,\ $q\ge \max\left\{1,\frac{ns}{\alpha}\right\}$, we also have the following integral estimate for some constant $N>0$ and $C$,
\begin{align}
    \int_{\rn}\frac{V(x)^s \, dx}{(1+m_V(x_0)|x-x_0|)^N|x-x_0|^{n-\alpha}}\le Cm_V(x_0)^{2s-\alpha}.\label{integrability of V}
\end{align}
This illustrates that $V^s$ can be regarde as a tempered distribution.

 %For $V\in \mathcal{B}_q$ and $q>\frac{n+1}{2}$, i
 It is well-known (see for example \cite{MoserHarnackElliptic}) that if the coefficient $A$ is real, then the weak solution $u$ of %$\mathcal{L}u=0$ or $\mathcal{L}^\ast u=0$ 
 $-\DIV(A^\ast\nabla w)=0$ satisfies De Giorgi-Nash-Moser local boundedness and local H\"older continuity with exponent $0<\alpha_0\le 1$ in a domain $\Omega\subset\rnp$  such that for any cube $Q=Q_r(x_0,t_0)\subset\rnp$, of radius $r$ and centered $(x_0,t_0)\in\rn\times\reals$, whose concentric $\sigma Q=Q_{\sigma r}(x_0,t_0)\subset\Omega\, (\sigma>1)$ as follows:
\begin{align}
        \sup_{Q_r(x_0,t_0)} |w| &\le C_\sigma \brackets{\fint_{Q_{\sigma r}(x_0,t_0)} |w|^2}^\frac{1}{2},\label{boundednesselliptic}\\
         |w(X)-w(Y)| &\le C_\sigma\frac{|X-Y|^{\alpha_0}}{r^{\alpha_0}} \brackets{\fint_{Q_{\sigma r}(x_0,t_0)} |w|^2}^\frac{1}{2},\label{holderelliptic}
    \end{align}
 wherever $X,Y\in Q_r(x_0,t_0)$.

 Considering the Schr\"odinger equation \rref{mainequation} with potential $V\in \mathcal{B}_q(\rn)$ and $q>\frac{n}{2}$, by the arguments in Section \ref{Appendix A}, if the coefficient $A$ is complex, and \rref{boundednesselliptic} and \rref{holderelliptic} hold for both $-\DIV(A\nabla w)=0$ and its adjoint equation $-\DIV(A^\ast\nabla w)=0$  in a domain $\Omega\subset\rnp$, then  the following DG-N-M properties also hold for weak solutions to $\mathcal{L}u=0$ and $\mathcal{L}^\ast u=0$ for some constant $N_0>0$, with a uniform H\"older exponent $0<\alpha\le\min\{\alpha_0,\delta_0\}$, such that $\sigma B=Q_{\sigma r}(x_0,t_0)\subset\Omega\,(\sigma>1)$, 
    \begin{align}
        \sup_{Q_r(x_0,t_0)} |u| &\le C_{\sigma} \brackets{\fint_{Q_{\sigma r}(x_0,t_0)} |u|^2}^\frac{1}{2}\brackets{1+r^2\fint _{Q_{\sigma r}(x_0)}Vd\xi}^{N_0},\label{degiorginash}\\
         |u(X)-u(Y)| &\le C_\sigma\frac{|X-Y|^\alpha}{r^\alpha} \brackets{\fint_{Q_{\sigma r}(x_0,t_0)} |u|^2}^\frac{1}{2}\brackets{1+r^2\fint _{Q_{\sigma r}(x_0)}Vd\xi}^{N_0+1},\label{moser}
    \end{align}
 wherever $X,Y\in Q_r(x_0,t_0)$. Note that in \cite{JiangLiSchrodinger} a proof is provided for real symmetric coefficients.
        
Furthermore we can combine \rref{property of mv}, \rref{globalfeffermanphong} and \rref{degiorginash}, \rref{moser} to get 
\begin{align}
    \sup_{Q_r(x_0,t_0)} |u| \le& \frac{C_N}{(1+rm_V(x_0) )^{N}} \brackets{\fint_{ Q_{\sigma r}(x_0,t_0)} |u|^2}^\frac{1}{2}\label{boundedness with decay}\\
    \sup_{X,Y\in Q_r(x_0,t_0)} |u(X)-u(Y)| \le& \frac{C_N|X-Y|^\alpha}{(1+rm_V(x_0) )^{N}r^\alpha} \brackets{\fint_{Q_{\sigma r}(x_0,t_0)} |u|^2}^\frac{1}{2}\label{holder with decay}
\end{align}
for every $N\ge 0$. 

In the following discussions we always assume $V\in\mathcal{B}_q(\rn)$ where $q \ge\max\{2,n/2\}$.  We also make the assumption that any weak solution of $-\DIV(A\nabla u)=0$ and its conjugate equation $-\DIV(A^\ast\nabla u)=0$ always satisfy DG-N-M properties \rref{boundednesselliptic} and \rref{holderelliptic}.  Then by Section \ref{Appendix A}, we know that \rref{boundedness with decay} and \rref{holder with decay}  hold for weak solutions to $\mathcal{L}u=0$ and $\mathcal{L}^\ast u=0$. Therefore, from \cite{LayerPotentialsBeyond,MorrisTurner} we know that the fundamental solutions $\Gamma(x,t|\xi,\tau)$ and $\overline{\Gamma(\xi,t|x,\tau)}$ of  $\mathcal{L}$ and $\mathcal{L}^\ast$ satisfy

 \begin{align} 
         |\Gamma(x,t|\xi,\tau)|+|\Gamma(\xi,\tau|x,t)|&\le \frac{C_N}{(1+(|x-\xi|+|t-\tau|)m_V(x))^N (|x-\xi|+|t-\tau|)^{n-1}}.
\end{align}

\subsection{Estimates on Fundamental Solutions and Green Functions}\label{estimates fundamental solutions green}

First we need a Caccioppoli-type inequality to bound the $L^2$ average of total derivative of a solution on hyperplane cubes parallel to $\rn\times 0$.
\begin{lemma}\label{basiclemma}There exists an $\varepsilon>0$ such that for every $0\le \tilde{\varepsilon}<\varepsilon$, if $\mathcal{L}u=0$ on $Q_{2r}(x)\times(t-r,t+r)$, we have
    \begin{align*}
        \brackets{\fint_{Q_r(x)}\labs \nabla_V u(\xi,t)\rabs ^{2+\tilde{\varepsilon}} d\xi}^\frac{1}{2+\tilde{\varepsilon}}\le C\brackets{\fint_{t-2r}^{t+2r}\fint_{Q_{2r}}|\nabla_V u|^2d\xi d\tau}^\frac{1}{2},\\
        \brackets{\fint_{Q_r(x)}\labs \nabla_V u(\xi,t)\rabs ^{2+\tilde{\varepsilon}} d\xi}^\frac{1}{2+\tilde{\varepsilon}}\le \frac{C}{r}\brackets{\fint_{t-2r}^{t+2r}\fint_{Q_{2r}}|u|^2d\xi d\tau}^\frac{1}{2}.
    \end{align*}
\end{lemma}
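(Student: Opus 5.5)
The plan is to exploit the $t$-independence of $A$ and $V$: derivatives in $t$ of solutions are again solutions. This lets us trade the slice estimate at fixed height $t$ for a full-dimensional estimate on a cube, in the spirit of \cite{AAAHK,MethodOfLayerPotential}. The second inequality will follow from the first together with a Caccioppoli inequality for $\mathcal{L}$.

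\textbf{Step 1 (Caccioppoli).} Test $\mathcal{L}u=0$ against $\eta^2\bar u$ with a cutoff $\eta$. Since $V\ge 0$ and $\Re(\xi^\dag A\xi)\ge\Lambda^{-1}|\xi|^2$, this gives
\begin{equation*}
\int\eta^2\brackets{|\nabla u|^2+V|u|^2}\le C\int|\nabla\eta|^2|u|^2 .
\end{equation*}
Hence $\fint_{\text{small}}|\nabla_V u|^2\le Cr^{-2}\fint_{\text{large}}|u|^2$. Combined with Step 3, this reduces the second inequality to the first.

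\textbf{Step 2 (Meyers/Gehring self-improvement).} The Caccioppoli inequality applied to $u-c$ is not available, since $u-c$ is not a solution. I would instead pair the $|\nabla u|$ part with $u-(u)_Q$ and handle the extra term $\int V u\,\eta^2(\bar u-\overline{(u)_Q})$ by absorbing it. Alternatively, treat $(\nabla u, V^{1/2}u)$ jointly and use Sobolev--Poincaré for $\nabla u$. For the $V^{1/2}u$ part, use $V\in\mathcal{B}_q$ with $q>1$ together with H\"older and Sobolev, in the form $\brackets{\fint V^{q}}^{1/q}$ times an $L^{2q'}$-norm of $u$.

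This yields a reverse H\"older inequality
\begin{equation*}
\brackets{\fint_{Q}|\nabla_Vu|^2}^{1/2}\le C\brackets{\fint_{2Q}|\nabla_Vu|^{2_*}}^{1/2_*}+\text{small}\cdot\brackets{\fint_{2Q}|\nabla_Vu|^2}^{1/2},
\end{equation*}
and Gehring's lemma gives higher integrability $L^{2+\varepsilon}$ in $(n+1)$ dimensions. I expect this to be the main obstacle: the potential term must be controlled without constants blowing up in $r^2\fint V$. The local Fefferman--Phong inequality \rref{localfeffermanphong} and the reverse H\"older property of $V$ should handle this.

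\textbf{Step 3 (passing to slices).} Since $\partial_t u$ solves $\mathcal{L}\partial_t u=0$, apply Step 1 to $\partial_t u$ and then Step 2. Also apply Step 2 to $\partial_t^2u$ via Caccioppoli again. This bounds $\nabla_V\partial_t u$ in $L^{2+\tilde\varepsilon}$ on a slightly smaller cylinder by $r^{-1}$ times the $L^2$ average of $\nabla_V u$.

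Now write, for $|s-t|<r/2$,
\begin{equation*}
\nabla_V u(\xi,t)=\nabla_V u(\xi,s)+\int_s^t\partial_\tau\nabla_V u(\xi,\tau)\,d\tau .
\end{equation*}
Average over $s$, take the $L^{2+\tilde\varepsilon}(Q_r(x),d\xi)$ norm, and use Minkowski/H\"older in $\tau$. This gives
\begin{equation*}
\|\nabla_Vu(\cdot,t)\|_{L^{2+\tilde\varepsilon}(Q_r)}\lesssim \fint_{t-r/2}^{t+r/2}\|\nabla_Vu(\cdot,s)\|_{2+\tilde\varepsilon}\,ds+r\brackets{\fint\fint|\nabla_V\partial_\tau u|^{2+\tilde\varepsilon}}^{1/(2+\tilde\varepsilon)}|Q_r|^{1/(2+\tilde\varepsilon)} .
\end{equation*}
Both terms are then controlled by the $(n+1)$-dimensional $L^{2+\tilde\varepsilon}$ bounds from Steps 1--2, and hence by $\brackets{\fint_{t-2r}^{t+2r}\fint_{Q_{2r}}|\nabla_Vu|^2}^{1/2}$. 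This is the first inequality; Step 1 then gives the second.

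The cube enlargements are handled by standard covering. The only additional care is that $V^{1/2}\partial_t u=\partial_t(V^{1/2}u)$ holds because $V$ is independent of $t$, so the slice identity above is legitimate for the potential component as well.
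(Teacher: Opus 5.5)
Your proposal is correct and follows essentially the argument the paper defers to, namely \cite[Prop.~2.1]{AAAHK} together with the fundamental-theorem-of-calculus/Caccioppoli computation in the proof of Lemma~\ref{differencefundamentalsolution}: higher integrability for $u$ and for the solution $\partial_t u$, Caccioppoli for $\partial_t u$, and integration in $t$, all legitimate because $A$ and $V$ are $t$-independent (the detour through $\partial_t^2 u$ is unnecessary). The step you flag as the main obstacle is closed by the same case split used in the proof of Theorem~\ref{maximalgradientlayerpotential}, rather than by your H\"older alternative, which on its own reintroduces a factor $r^2\fint V$. If $r\fint_{2Q}V^{1/2}\le 1$, take $c=(u)_{2Q}$ and bound $|c|^2\fint_{2Q}V\lesssim\brackets{\fint_{2Q}V^{1/2}\fint_{2Q}|u|}^2\lesssim\brackets{\fint_{2Q}|\nabla_V u|}^2$ using $V^{1/2}\in\mathcal{B}_2$ and \rref{localfeffermanphong} with $p=1$. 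Otherwise take $c=0$ and combine Sobolev--Poincar\'e with \rref{localfeffermanphong}, which gives $r^{-1}\fint_{2Q}|u|\lesssim\fint_{2Q}|\nabla_V u|$. In both cases no power of $r^2\fint V$ enters the reverse H\"older constant.
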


The proof of this lemma is basically the same as in \cite[Prop. 2.1]{AAAHK} since $V(x)$ is independent of $t$, or we refer the readers to the proof of Lemma \ref{differencefundamentalsolution} below.

Write $\Theta_r(x,t)=Q_{2r}(x,t)\setminus Q_r(x,t)$ the dyadic annulus with radius from $r$ to $2r$ and $\tilde{\Theta}_{r}(x,t)=Q_{4r}(x,t)\setminus Q_{\frac{r}{2}}(x,t)$. For the fundamental solution $\Gamma(x,t|\xi,\tau)$ we have the following estimates derived from Lemma \ref{basiclemma} by elementary calculations.
\begin{lemma} For any $N>0$ and $m\geq 0$, if $ \gamma_1\left(|x-x^\prime|+|t-t^\prime|\right)\le |x-\xi|+|t-\tau| \quad (\gamma_1>1)$, we then have
    \begin{align} 
                &|\partial_t^m\Gamma(x,t|\xi,\tau)|\le \frac{C_N}{(1+(|x-\xi|+|t-\tau|)m_V(x))^N (|x-\xi|+|t-\tau|)^{n-1+m}},\\
        &\labs\partial_t^m\brackets{\Gamma(x,t|\xi,\tau)-\Gamma(x^\prime,t^\prime|\xi,\tau)}\rabs\\&\le \frac{C_N (|x-x^\prime|+|t-t^\prime|)^\alpha}{(1+(|x-\xi|+|t-\tau|)m_V(x))^N(|x-\xi|+|t-\tau|)^{n-1+m+\alpha}},\\
        &\int_{\Theta_{2^kr}(x)}|\nabla_V\Gamma(x,t|\xi,\tau)|^2d\xi\le \frac{C_N2^{-kn}r^{-n}}{(1+2^krm_V(x))^N}, \\
        &\int_{\Theta_{2^kr}(x)}\labs \nabla _V \brackets{\Gamma(x^\prime,t^\prime|\xi,\tau)-\Gamma(x,t|\xi,\tau)}\rabs ^2 d\xi
        \le \frac{C_N 2^{-2k\alpha-kn}r^{-n}}{(1+2^krm_V(x))^N}.\label{gradientdifferenceholder}
    \end{align}
\end{lemma}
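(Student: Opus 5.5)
The four estimates follow from three facts: the pointwise decay bound on $\Gamma$ stated just before Section \ref{estimates fundamental solutions green}, the interior estimates \rref{boundedness with decay} and \rref{holder with decay}, and the Caccioppoli-type Lemma \ref{basiclemma}. Throughout, write $R=|x-\xi|+|t-\tau|$.

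\emph{$t$-derivatives.} Fix the pole $(\xi,\tau)$. Since $A$ and $V$ do not depend on $t$, every $\partial_t^m\Gamma(\cdot,\cdot|\xi,\tau)$ is again a solution of $\mathcal{L}w=0$ away from the pole.

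\begin{itemize}
\item For the first estimate, work in the cube $Q_{R/2}(x,t)$, which stays at distance comparable to $R$ from the pole. Apply the $L^2$ form of Caccioppoli $m$ times on nested cubes of radii between $R/4$ and $R/2$. Each application gains a factor $R^{-1}$, so the $L^2$ average of $\partial_t^m\Gamma$ on $Q_{R/4}(x,t)$ is at most $CR^{-m}$ times the $L^2$ average of $\Gamma$ on $Q_{R/2}(x,t)$.
\item Then apply \rref{boundedness with decay} to $\partial_t^m\Gamma$ to pass to the pointwise bound, and insert the pointwise decay bound on $\Gamma$. Since $m_V(y)\sim m_V(x)$ for $|y-x|\lesssim R$ up to factors $(1+Rm_V(x))^{\kappa_0}$ by \rref{property of mv}, the loss is absorbed by raising $N$.
\item For the Hölder difference with $\gamma_1(|x-x'|+|t-t'|)\le R$, apply \rref{holder with decay} to $\partial_t^m\Gamma$ on $Q_{R/(2\gamma_1)}(x,t)$ and bound the $L^2$ average by the first estimate. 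This gives the factor $(|x-x'|+|t-t'|)^\alpha R^{-\alpha}$.
\end{itemize}

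\emph{Annular estimates.} Here the integral is over an $n$-dimensional slice in $\xi$ at fixed $\tau$. Use the adjoint symmetry, so that $\xi\mapsto\Gamma(x,t|\xi,\tau)$ solves $\mathcal{L}^\ast w=0$ away from $x$, with the same decay and Hölder bounds.

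\begin{itemize}
\item Cover $\Theta_{2^kr}(x)$ by boundedly many cubes $Q_\rho(\xi_j)$ with $\rho\sim 2^kr/10$. Each such cube, thickened to height $\rho$ around $\tau$, stays at distance $\gtrsim 2^kr$ from the pole.
\item On each cube, the second inequality of Lemma \ref{basiclemma} (the case $\tilde\varepsilon=0$ suffices) bounds the slice integral of $|\nabla_V\Gamma|^2$. The bound is $\rho^n\cdot\rho^{-2}\cdot\sup|\Gamma|^2\lesssim (2^kr)^{n-2}(2^kr)^{-2(n-1)}(1+2^krm_V)^{-N}=(2^kr)^{-n}(1+2^krm_V)^{-N}$. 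This is the third estimate.
\item For the last estimate, apply the same slice Caccioppoli to the difference $w=\Gamma(x',t'|\cdot,\tau)-\Gamma(x,t|\cdot,\tau)$, which is an adjoint solution in $\xi$. Its sup on the thickened cubes is controlled by the second pointwise estimate, used in the roles exchanged via the adjoint fundamental solution. That sup is $(|x-x'|+|t-t'|)^\alpha(2^kr)^{-(n-1+\alpha)}$ times decay.
\item Squaring and multiplying by $(2^kr)^{n-2}$ gives $(|x-x'|+|t-t'|)^{2\alpha}(2^kr)^{-n-2\alpha}$. When $|x-x'|+|t-t'|\lesssim r$ this is bounded by $2^{-2k\alpha-kn}r^{-n}$, as stated in \rref{gradientdifferenceholder}.
\end{itemize}

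\emph{Main obstacle.} The delicate point is the Hölder regularity in the first variable with the pole held fixed. It requires the DG-N-M property for $\mathcal{L}^\ast$ as well as for $\mathcal{L}$, together with the identity $\Gamma(x,t|\xi,\tau)=\overline{\Gamma^\ast(\xi,\tau|x,t)}$. Both are available from \cite{MorrisTurner} under the standing assumptions.

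A secondary issue is the $V^{1/2}$ part of $\nabla_V$. Its contribution $\int V|\Gamma|^2$ over the slice is handled by the sup bound together with \rref{zeta} when $2^kr\lesssim\rho(x)$. In the complementary regime one uses \rref{integrability of V}, with the extra polynomial growth absorbed into the decay factor $(1+2^krm_V(x))^{-N}$ by enlarging $N$.
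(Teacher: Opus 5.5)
Your proposal is correct and follows the route the paper intends; the paper only says these estimates are ``derived from Lemma \ref{basiclemma} by elementary calculations.'' You iterate Caccioppoli for the solutions $\partial_t^m\Gamma$ (solutions because $A,V$ are $t$-independent) and combine this with \rref{boundedness with decay} and \rref{holder with decay} for the pointwise bounds. You then apply the slice estimate of Lemma \ref{basiclemma} on a bounded cover of the annulus, fed by those sup bounds, for the two integral bounds.

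Some minor tidying is needed:
\begin{itemize}
\item The $V^{1/2}$ part needs no separate treatment, because Lemma \ref{basiclemma} already controls the full $\nabla_V$.
\item The H\"older step should be split into two cases, since a cube of radius $R/(2\gamma_1)$ need not contain $(x',t')$ or keep its dilate away from the pole. If $|x-x'|+|t-t'|\le cR$, use \rref{holder with decay} on a cube of radius $cR$ with $c$ small. If $|x-x'|+|t-t'|>cR$, use the triangle inequality and the size bound at both points, with a constant depending on $\gamma_1$.
\item The last estimate needs $|x-x'|+|t-t'|\le cr$ with $c$ small relative to the annulus. There it is just the second estimate with $m=0$ in the first variable, so no exchange of roles is required.
\end{itemize}
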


Note that $1+(|x-\xi|+|t-\tau|)m_V(\xi) $ in the denominators on the right hand side are interchangeable with $1+(|x-\xi|+|t-\tau|)m_V(x) $ due to the property \rref{property of mv}. 

Denote $\Gamma_0(x,t|\xi,\tau)$ be the fundamental solution of the elliptic equation $-\DIV(A\nabla u)=0$, then we can estimate the difference between these two fundamental solutions with the help of Lemma \ref{basiclemma}.

\begin{lemma}\label{differencefundamentalsolution}
Let  $\delta_0 =2-\frac{n}{q}$ as in \rref{zeta}.
For $|x-\xi|+|t-\tau|<\rho(x)$ and $k\ge 0$ we have
  \begin{align}
      &\labs\Gamma(x,t|\xi,\tau)-\Gamma_0(x,t|\xi,\tau)\rabs\le \frac{Cm_V(x)^{\delta_0}  }{(|x-\xi|+|t-\tau|)^{n-1-\delta_0 }},\label{gamma - gamma0 estimate}\\
      &\int_{\Theta_{2^{-k}r}(x)}\labs \nabla _V \brackets{\Gamma(x,t|\xi,\tau)-\Gamma_0(x,t|\xi,\tau)}\rabs ^2 d\xi
        \le C 2^{-2k\delta_0 +kn}r^{2\delta_0 -n}m_V(x)^{2\delta_0} ,\label{difference gradient fundamental solution}\\
        &\labs\brackets{\Gamma(x,t|\xi,\tau)-\Gamma_0(x,t|\xi,\tau)}-\brackets{\Gamma(x^\prime,t^\prime|\xi,\tau)-\Gamma_0(x^\prime,t^\prime|\xi,\tau)} \rabs\\&\le\frac{Cm_V(x)^{\delta_0} \brackets{|x-x^\prime|+|t-t^\prime|}^\alpha }{(|x-\xi|+|t-\tau|)^{n-1-\delta_0 +\alpha}},\label{double difference fundamental solution} 
        \end{align} 
       and for $|x-x^\prime|+|t-t^\prime|\le r$ and $2^{k+1} r\le\rho(x)$ with  $k\ge 0$, we have
       \begin{align}
        &\int_{\Theta_{2^kr}(x)}\labs \nabla _V \brackets{\brackets{\Gamma(x,t|\xi,\tau)-\Gamma_0(x,t|\xi,\tau)}-\brackets{\Gamma(x^\prime,t^\prime|\xi,\tau)-\Gamma_0(x^\prime,t^\prime|\xi,\tau)}}\rabs ^2 d\xi\nonumber
        \\&\le C 2^{2k\delta_0 -2k\alpha-kn}r^{2\delta_0 -n}m_V(x)^{2\delta_0}.\label{double difference integral gradient fundamental solution}
  \end{align}   
  
\end{lemma}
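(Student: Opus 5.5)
Fix the pole $Y=(\xi,\tau)$ and write $w(X)=\Gamma(X|Y)-\Gamma_0(X|Y)$ as a function of $X=(x,t)$. Then away from the pole
\[
-\DIV(A\nabla w)+Vw=-V\Gamma_0(\cdot|Y),
\]
in the distributional sense. Equivalently, $w=-\int\Gamma(X|Z)V(z)\Gamma_0(Z|Y)\,dZ$ with $Z=(z,s)$. This is the second resolvent identity, obtained by testing the two defining identities of $\Gamma$ and $\Gamma_0$ against each other. Since $V\ge 0$, all the potentials involved are nonnegative and the manipulation is justified by the pointwise bounds on $\Gamma$ and $\Gamma_0$.

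\textbf{Step 1: proof of \rref{gamma - gamma0 estimate}.} Let $d=|x-\xi|+|t-\tau|$.
\begin{itemize}
\item Bound $|\Gamma(X|Z)|$ by the decaying estimate for $\Gamma$, and $|\Gamma_0(Z|Y)|\le C|Z-Y|^{1-n}$ by DG-N-M.
\item Integrate in $s$ first. This gives
\[
\int_{\mathbb R}|X-Z|^{1-n}|Z-Y|^{1-n}\,ds\lesssim |x-z|^{2-n}\,|z-\xi|^{1-n}
\]
in the region $|x-z|\le |z-\xi|$, and symmetrically in the other region.
\item Split $\rn$ into three regions: $\{|z-x|<d/2\}$, $\{|z-\xi|<d/2\}$, and the rest. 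Decompose the rest dyadically up to scale $\rho(x)$, and treat scales beyond $\rho(x)$ using the factor $(1+|x-z|m_V)^{-N}$ together with \rref{integrability of V}.
\item On each dyadic ball use \rref{zeta}, i.e. $\int_{Q_r}V\le C r^{n-2}(r/\rho)^{\delta_0}$.
\item Summing gives at most $C d^{1-n}(d/\rho)^{\delta_0}$ for the near regions, and a geometric sum up to $\rho$ dominated by the top scale for the far region. The far region's top scale yields at worst $\rho^{1-n}\le d^{1-n}(d/\rho)^{\delta_0}$ as long as $n-1>\delta_0$. This proves the bound.
\end{itemize}

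\textbf{Step 2: proofs of \rref{double difference fundamental solution} and \rref{difference gradient fundamental solution}.}
\begin{itemize}
\item For \rref{double difference fundamental solution}: if $|X-X'|\ge d/\gamma_1$, the claim follows from Step 1 applied at $X$ and at $X'$. Otherwise $w$ solves $\mathcal L w=-V\Gamma_0$ in a ball $B$ of radius $\sim d$ around $X$ avoiding $Y$. Split $w=w_1+w_2$, where $w_1$ is the $\mathcal L$-solution in $B$ with the boundary values of $w$, and $w_2=-\int_B G_B(\cdot,Z)V\Gamma_0\,dZ$. Apply \rref{moser} to $w_1$ together with the sup bound from Step 1. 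Estimate $w_2$ by the same potential computation as in Step 1, now using the Hölder estimate for the Green function, with $\alpha\le\delta_0$ keeping the exponents compatible.
\item For \rref{difference gradient fundamental solution}: apply the Caccioppoli inequality of Lemma \ref{basiclemma} for the inhomogeneous equation on the slab around the annulus $\Theta_{2^{-k}r}$. The right-hand side is controlled by the $L^2$ average of $w$, bounded via Step 1, plus an $\ell^2$ contribution of the form $(r')^2\|V\Gamma_0\|$. The latter is handled by Hölder's inequality with $V\in\mathcal B_q$ and \rref{zeta}. Raising to the square and integrating over the slice $\{\tau\text{ fixed}\}$ gives the factor $2^{kn}r^{-n}$ times $(2^{-k}r\,m_V)^{2\delta_0}$.
\end{itemize}

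\textbf{Step 3: proof of \rref{double difference integral gradient fundamental solution}.} Apply the same slice Caccioppoli argument to $w(\cdot|Y)-w(\cdot|Y')$, viewed as a function of the pole variable. This is legitimate by the symmetry $\Gamma(X|Y)=\overline{\Gamma^\ast(Y|X)}$, which turns the pole difference into a solution of the adjoint problem. The $L^2$ input is \rref{double difference fundamental solution} at scale $2^kr$. The result is the gain $2^{-2k\alpha}$ against the growth $2^{2k\delta_0}$, giving the stated power.

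\textbf{Main obstacle.} I expect the main difficulty to be the inhomogeneous Hölder estimate in Step 2, namely controlling $w_2$ without losing the factor $(d/\rho)^{\delta_0}$. If the Green-function route becomes messy, I would try first an alternative: differentiate the resolvent representation directly, writing $w(X)-w(X')=-\int(\Gamma(X|Z)-\Gamma(X'|Z))V\Gamma_0\,dZ$. Then use the Hölder bound on $\Gamma$ in its first variable for $|Z-X|\ge 2|X-X'|$, and the crude bound otherwise. The region near $X$ contributes $|X-X'|^{\delta_0}\le |X-X'|^{\alpha}d^{\delta_0-\alpha}$, which is consistent with the target.
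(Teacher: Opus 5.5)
Your plan follows the paper's route for three of the four estimates. For \rref{gamma - gamma0 estimate} you use the resolvent identity; the paper simply cites Shen's argument, which is of this type. For \rref{difference gradient fundamental solution} and \rref{double difference integral gradient fundamental solution} you use the slice bound from the fundamental theorem of calculus in $\tau$ plus Caccioppoli for the inhomogeneous adjoint equation, as the paper does. In that slice argument, $\partial_\tau w$ solves an equation with source $V\partial_\tau\Gamma_0$, and this term must be carried along; the paper does so using the pointwise bound \rref{partial t gamma - gamma 0}. Your sketch lists only the $V\Gamma_0$ term.

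For \rref{double difference fundamental solution} you are more careful than the paper, which asserts it follows from \rref{holder with decay} and \rref{gamma - gamma0 estimate}. Since $w$ solves $\mathcal{L}w=-V\Gamma_0$ rather than $\mathcal{L}w=0$, and separate H\"older bounds for $\Gamma$ and $\Gamma_0$ lose the factor $(d\,m_V(x))^{\delta_0}$, an inhomogeneous argument is genuinely needed. Either your local Green function splitting or the differentiated resolvent identity supplies it. One caveat: the range $2h\le|Z-X|\le d$ produces $m_V(x)^{\delta_0}\sum_j(2^jh)^{\delta_0-\alpha}$, which becomes a $\log(d/h)$ when $\alpha=\delta_0$, so take $\alpha<\delta_0$.

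The step that fails as written is the computation in Step 1. Bounding the $s$-integral by $|x-z|^{2-n}|z-\xi|^{1-n}$ throws away the vertical separation $|t-\tau|$.
\begin{itemize}
\item When $|x-\xi|\ll|t-\tau|\approx d$ (say $x=\xi$), your regions $\{|z-x|<d/2\}$ and $\{|z-\xi|<d/2\}$ coincide.
\item There the integrand is bounded only by $V(z)|x-z|^{3-2n}$. This is not integrable in general; already for $V\equiv 1$ it diverges, since $n\ge 3$.
\item Correspondingly, the dyadic bounds from \rref{zeta}, namely $(2^{-j}d)^{1-n}(2^{-j}d\,m_V(x))^{\delta_0}$, grow as $j\to\infty$, because $\delta_0<2\le n-1$.
\end{itemize}
So your near-region bound $Cd^{1-n}(d/\rho)^{\delta_0}$ is not obtained in a regime the lemma covers.

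The fix is to split in $\mathbb{R}^{n+1}$ before integrating in $s$.
\begin{itemize}
\item On $\{|Z-X|<d/2\}$, use $|\Gamma_0(Z|Y)|\le Cd^{1-n}$. Then $\int_{\mathbb{R}}|X-Z|^{1-n}\,ds\le C|x-z|^{2-n}$ together with \rref{zeta} gives $Cd^{1-n}(d\,m_V(x))^{\delta_0}$.
\item The region $\{|Z-Y|<d/2\}$ is handled symmetrically.
\item On the rest, $|X-Z|^{1-n}|Z-Y|^{1-n}\le C(d+|X-Z|)^{2-2n}$. The resulting dyadic series has ratio $2^{1-n+\delta_0}<1$, so it is dominated by its smallest scale $d$, not the top scale as you wrote. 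The factor $(1+|X-Z|m_V(x))^{-N}$ handles scales beyond $\rho(x)$.
\end{itemize}
Alternatively, keep $(|z-\xi|+|t-\tau|)^{1-n}$ in the $s$-integral.

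Finally, for complex $A$ the kernels are not nonnegative. The resolvent identity should therefore be justified by absolute convergence of $\int|\Gamma(X|Z)|V(z)|\Gamma_0(Z|Y)|\,dZ$, which your pointwise bounds give, rather than by positivity.
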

\begin{proof}
    By a similar argument in \cite{NeumannSchShen} we can prove \rref{gamma - gamma0 estimate}, as well as
 \begin{align}\label{partial t gamma - gamma 0}
     \labs\partial_t\left(\Gamma(x,t|\xi,\tau)-\Gamma_0(x,t|\xi,\tau)\right)\rabs&\le \frac{Cm_V(x)^{\delta_0}  }{(|x-\xi|+|t-\tau|)^{n-\delta_0 }}.
 \end{align}
 for $|x-\xi|+|t-\tau|<\rho(x)$.
 
 We then proceed to prove \rref{difference gradient fundamental solution}. Let $u(\xi,\tau)=\Gamma(x,t|\xi,\tau)-\Gamma_0(x,t|\xi,\tau)$, then by basic calculation we know that $\mathcal{L}^\ast u=V(\xi)\Gamma_0(x,t|\xi,\tau), \, \mathcal{L}^\ast \partial_t u=V(\xi)\partial_t\Gamma_0(x,t|\xi,\tau)$, and by Fundamental Theorem of Calculus we have
 \begin{align*}
     &\int_{\Theta_{2^k r}(x)}|\nabla_V u(\xi,\tau_0)|^2dx\\ \le &\frac{C}{2^k r}\int_{\tau_0-2^{k} r}^{\tau_0+2^{k} r}\int_{\Theta_{2^k r}(x)}|\nabla_Vu(\xi,\tau)|^2 d\xi d\tau+C2^k r\int_{\tau_0-2^{k} r}^{\tau_0+2^{k} r}\int_{\Theta_{2^k r}}|\nabla_V \partial_\tau u(\xi,\tau)|^2 d\xi d\tau \\
     \le &\frac{C}{2^{3k}r^3}\int_{\tau_0-2^{k+1} r}^{\tau_0+2^{k+1} r}\int_{\tilde{\Theta}_{2^k r}(x)}|u(\xi,\tau)|^2 d\xi d\tau+\frac{C}{2^{k}r}\int_{\tau_0-2^{k+1} r}^{\tau_0+2^{k+1} r}\int_{\tilde{\Theta}_{2^k r}}|\partial_\tau u(\xi,\tau)|^2 d\xi d\tau \\
     &+{C}{2^{k}r}\int_{\tau_0-2^{k+1} r}^{\tau_0+2^{k+1} r}\int_{\tilde{\Theta}_{2^k r}(x)}|V(\xi)\Gamma_0(x,t|\xi,\tau)|^2 d\xi d\tau\\&+{C}2^{3k} r^3\int_{\tau_0-2^{k+1} r}^{\tau_0+2^{k+1} r}\int_{\tilde{\Theta}_{2^{k }r}(x)}|V(\xi)\partial_t\Gamma_0(x,t|\xi,\tau)|^2 d\xi d\tau 
 \end{align*}
 by referring to Caccioppoli's inequality. Using \rref{gamma - gamma0 estimate}, \rref{partial t gamma - gamma 0} and the reverse H\"older inequality for $V$ we have
 \begin{align*}
     &\int_{\Theta_{2^k r}(x)}|\nabla_V u(\xi,\tau_0)|^2dx\\
     \le & C\frac{m_V(x)^{2\delta_0}(2^k r)^{n+1}}{(2^k r)^{2n+1-2\delta_0}}+C\frac{(2^k r)^{n+1}}{(2^k r)^{2n-3}}\left(\fint_{Q_{2^k r}} Vd\xi \right)^2\\
     \le& Cm_V(x)^{2\delta_0}(2^k r)^{-n+2\delta_0}+C(2^k r)^{-n+4}(2^k r)^{-4}(2^{-k} r^{-1}m_V(x))^{2\delta_0}\\
     =&Cm_V(x)^{2\delta_0} 2^{-nk+2k\delta_0}r^{-n+2\delta_0}.
 \end{align*}
 
 The estimate \rref{double difference fundamental solution} is a consequence of \rref{holder with decay} and \rref{gamma - gamma0 estimate}, and the deduction of \rref{double difference integral gradient fundamental solution} is similar to \rref{difference gradient fundamental solution}.
\end{proof}

By the solvability of $(N_2)$ and $(R_2)$ and De Giorgi-Nash-Moser property of solutions, Green functions and Neumann functions can be constructed, and it can be verified using arguments resembling \cite{GlobalGreenFunctionKimKang} that the following pointwise estimates hold, by referring to local boundedness and H\"older continuity near the boundary.

\begin{lemma}%\cite{}
    Suppose $A_0$ is of block form or real symmetric and $A$ satisfies the assumptions above for $\epsilon_0>0$ sufficiently small dependent on elliptic constants, DG-N-M constants and $[\![V]\!]_q$. Then the Neumann function and Green function on $\rnp_+$ satisfy the following estimates, 
    \begin{align}
         |N(x,t|\xi,\tau)|+|G(x,t|\xi,\tau)|&\le \frac{C_N}{(1+(|x-\xi|+|t-\tau|)m_V(\xi))^N(|x-\xi|+|t-\tau|)^{n-1}}\label{estiamteGreenNeumann}\\
        |N(x,t|\xi,\tau)-N(x,t|\xi_0,\tau_0)|&+|G(x,t|\xi,\tau)-G(x,t|\xi_0,\tau_0)|\nonumber\\&\le \frac{C_N (|\xi-\xi_0|+|\tau-\tau_0|)^\alpha}{(1+(|x-\xi|+|t-\tau|)m_V(\xi))^N(|x-\xi|+|t-\tau|)^{n-1+\alpha}}\label{differenceGreenNeumann}
    \end{align}
    for $ \gamma_1(|\xi-\xi_0|-|\tau-\tau_0|)\le (|x-\xi_0|+|t-\tau_0|) \quad (\gamma_1>1)$.
\end{lemma}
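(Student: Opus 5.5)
The plan is to construct $G$ and $N$ on $\rnp_+$ from the whole-space fundamental solution by reflection-type corrections. Then I will derive the size and H\"older bounds \rref{estiamteGreenNeumann}, \rref{differenceGreenNeumann} from the boundary De Giorgi-Nash-Moser estimates, together with the decay mechanism that produced \rref{boundedness with decay}.

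\textbf{Construction.} I write
\[
G(x,t|\xi,\tau)=\Gamma(x,t|\xi,\tau)-w^{D}_{(\xi,\tau)}(x,t),\qquad N(x,t|\xi,\tau)=\Gamma(x,t|\xi,\tau)-w^{N}_{(\xi,\tau)}(x,t).
\]
Here $w^D$ solves $\mathcal{L}w=0$ in $\rnp_+$ with $w=\Gamma(\cdot|\xi,\tau)$ on $t=0$. Similarly, $w^N$ solves the problem with conormal data $\partial_\nu\Gamma(\cdot|\xi,\tau)$. Both are solved through the $L^2$ solvability of $(R)_2$ and $(N)_2$, which holds for $A$ a small perturbation of block/real symmetric $A_0$ by the first-order results of \cite{MorrisTurner} and the perturbation argument of \cite{AAAHK}. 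Following \cite{GlobalGreenFunctionKimKang}, I would alternatively get $G$ and $N$ as limits of approximate Green functions $G^\rho$, where $\mathcal{L}^\ast G^\rho=\fint_{B_\rho}$, by Lax--Milgram. The Lax--Milgram step uses the energy space with norm $\|\nabla_V u\|_2$, which is a genuine norm for Neumann by \rref{globalfeffermanphong}. Uniform weak-type bounds on $G^\rho$ then follow from duality with $L^2$ solvability.

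\textbf{Pointwise size.} Fix $X=(x,t)$ and $Y=(\xi,\tau)$ with $d=|X-Y|$. The function $u=G(X|\cdot)$ solves $\mathcal{L}^\ast u=0$ in $B_{d/2}(Y)\cap\rnp_+$, and it either vanishes on the boundary or has zero conormal derivative there. I extend the interior estimates \rref{degiorginash} and \rref{moser} up to the flat boundary. For Dirichlet data this is done by odd-type comparison. For Neumann data I use boundary Moser iteration with test functions not vanishing on $t=0$, which works because $V\ge0$ and $V$ is independent of $t$. The $L^2$ average of $u$ on the annulus is bounded by $Cd^{1-n}$ using the global energy bound and Sobolev embedding. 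Finally, the factor $(1+d\,m_V(\xi))^{-N}$ comes from iterating the boundary version of \rref{degiorginash} with \rref{localfeffermanphong} on a chain of $\sim d\,m_V$ scales, exactly as for \rref{boundedness with decay}. By \rref{property of mv}, $m_V(\xi)$ and $m_V(x)$ are interchangeable here.

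\textbf{H\"older bound and main obstacle.} Once the size bound holds, \rref{differenceGreenNeumann} follows by applying the boundary H\"older estimate on the ball $B_{|X-Y_0|/\gamma_1}(Y_0)$, whose doubled ball avoids $X$. The main obstacle is the Neumann boundary regularity for complex $A$, which is a perturbation of block form. There DG-N-M at the boundary is not automatic. I would try to obtain it from the $t$-independence: $\partial_t u$ is again a solution, and Lemma \ref{basiclemma} on horizontal slices, combined with the $L^2$ Neumann solvability estimate for $\tilN(\nabla_V u)$, gives H\"older control of traces. Smallness of $\epsilon_0$ transfers the estimates from $A_0$ to $A$.
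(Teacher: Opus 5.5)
Your architecture matches the paper's. The paper only sketches this lemma: it constructs $G$ and $N$ from the $L^2$ solvability of $(R)_2$, $(N)_2$ and DG-N-M, then appeals to Kang--Kim type arguments ``by referring to local boundedness and H\"older continuity near the boundary''. That is your skeleton: Lax--Milgram approximants, the size bound from boundary local boundedness, the decay factor as for \rref{boundedness with decay}, and the H\"older bound from boundary H\"older continuity.

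The weak link, in both, is that boundary DG-N-M for the complex matrix $A=A_0+\epsilon_0\tilde A$ is never actually produced, and the tools you name would not produce it.
\begin{itemize}
\item Moser iteration and comparison principles require real coefficients. Positivity and $t$-independence of $V$ do not help with that.
\item Odd or even reflection turns $A$ into $PAP$, $P=\mathrm{diag}(1,\dots,1,-1)$, in $t<0$. Nothing is known about that piecewise matrix when $A$ is complex and not of block form.
\item The $\partial_t u$/slice idea gives no H\"older modulus. Lemma \ref{basiclemma} and $\tilN(\nabla_V u)\in L^2$ only control gradients in $L^{2+\varepsilon}$, far from what a Morrey embedding needs, and give no decay of $\int_{B_r^+}|\nabla u|^2$ as $r\to 0$ (here $B_r^+=B_r\cap\rnp_+$).
\end{itemize}
Also, in the Kang--Kim scheme the uniform bounds on the approximants $G^\rho$ come from boundary local boundedness, not from $L^2$ solvability. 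So the whole lemma hinges on this step.

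What does work is your closing sentence made precise.

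\textbf{Step 1: boundary DG-N-M for $A_0$ and $A_0^\ast$.} Here reflection is legitimate. Odd (Dirichlet) or even (Neumann) reflection gives a full-space solution for the matrix equal to $A_0$ in $t>0$ and $PA_0P$ in $t<0$. In the block case this is $A_0$ itself, so the assumed interior DG-N-M applies. In the real symmetric case it is a real elliptic matrix, so De Giorgi--Nash applies.

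\textbf{Step 2: passage to $A$ via the Morrey--Campanato form of DG-N-M.} Let $-\DIV(A\nabla u)=0$ in $B_R^+$ with zero Dirichlet or zero conormal data on the flat part. Let $v$ solve the $A_0$-problem with $v=u$ on $\partial B_R\cap\rnp_+$ and the same flat condition. Then $\|\nabla(u-v)\|_{L^2(B_R^+)}\le C\epsilon_0\|\nabla u\|_{L^2(B_R^+)}$; in the Neumann case this holds because $\int A\nabla u\cdot\nabla\bar\phi=0$ for every $\phi$ vanishing on the curved part. Hence
\[
\int_{B_r^+}|\nabla u|^2\le C\Bigl[\Bigl(\frac rR\Bigr)^{n-1+2\alpha_0}+\epsilon_0^2\Bigr]\int_{B_R^+}|\nabla u|^2,\qquad 0<r<R,
\]
which gives boundary H\"older continuity and local boundedness once $\epsilon_0$ is small.

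\textbf{Step 3: adding the potential.} Use the representation argument the paper uses for \rref{degiorginash} and \rref{moser}, with the half-space Green or Neumann function of $-\DIV(A\nabla\cdot)$ in place of $\Gamma_0$.

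After that, your construction, the decay argument and the H\"older step go through as you describe.
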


All the above estimates are applicable for the conjugate operator $\mathcal{L}^\ast$, hence they are still true with the two arguments of $\Gamma ,N$ or $G$ interchanged.

\subsection{Function Spaces}\label{endpointspace}

For $p\le 1$, Hardy space is a reasonable alternative for the $L^p$ space in the sense of interpolation. In the situation where Schr\"odinger equations are considered,  we must modify the space such that the integrals of its functions do not vanish. As in \cite{Dziubanski,YangDachunHardySpace,BuiDuongHardySpace}, for $\frac{n}{n+1}<p\le 1$ we define a $2$-atom for $H^p_\mathcal{L}(\rn)$ space as a function $a(x)$ satisfying the following three assumptions:
\begin{itemize}
    \item[(i)] $\supp a(x) \subset Q_r(x_0)$ with $r\le \gamma_0\rho(x_0)$ for some $\gamma_0>1$,
    \item[(ii)] $\|a\|_2\le C r^{\frac{n}{2}-\frac{n}{p}} $,
    \item[(iii)] $ \int_{\rn} a(x)dx=0$ for $r\le \rho(x_0)$.
\end{itemize}
for some positive constant $C$. The atomic local Hardy space $H^p_{\mathcal{L},\textup{at}}(\rn)$ consists of sums of atoms with certain coefficients, taken in the sense of distribution on $\mathcal{D}=C_c^\infty(\rn)$, given by
\begin{equation*}
    H^p_{\mathcal{L},\textup{at}}(\rn)=\left\{f=\sum_{k=1}^{\infty}\lambda_ka_k(x)\in\mathcal{D}^\prime(\rn)\mid\|f\|_{H^p_\mathcal{L},\textup{at}}^p\defeq\sum_{k=1}^{\infty}|\lambda_k|^p<\infty\right\}.
\end{equation*}

Also the $H^p_\mathcal{L}$ space defined by $\rho$-local maximal function
\begin{align}
        \mathcal{P}_\rho f(x)=\sup_{0<t<\rho(x)}\absvalue{\int_{Q_r(x_0)}\varphi_t(x-\xi)f(\xi)d\xi}\label{definitionofPV}
    \end{align}
    where $\varphi\in C_c^\infty(\rn)$ with $\int_{\rn}\varphi=1$, and $H^p_\mathcal{L}=\left\{f\Big|\|f\|_{H^p_\mathcal{L}}\defeq \|\mathcal{P}_{\rho} f\|_{p}<\infty\right\}$. In fact $H^p_{\mathcal{L},\textup{at}}=H^p_{\mathcal{L}}$.
    
    For the space $H^p_\mathcal{L}$ we would mention a special case when $V\equiv 0$, the function $\rho(x)\equiv +\infty$, then the space degenerates to the usual Hardy space $H^p_{\Delta}=H^p$, as stated in \cite{FeffermanStein}. Also we note that for $1<p<\infty$ we have $H^p_\mathcal{L}(\rn)=H^p(\rn)=L^p(\rn)$ since the maximal function $\mathcal{P}_\rho f$ is controlled by $\mathcal{M}f$ and $|f|$ is controlled by $\mathcal{P}_\rho f$.

Now we introduce the approximate molecule definition and prove its properties.

\begin{definition}
    An approximate molecule of $H^p_\mathcal{L}$ associated with a cube $Q_r(x_0)$ is defined by the function $m(x)$ satisfying the following four assumptions.
    
    \begin{itemize}
        \item[(i)] $r\le \gamma_0\rho(x_0)$ for some $\gamma_0 >1$, 
        \item[(ii)] $\lnorm m\chi_{Q_r}\rnorm_2\le C r^{\frac{n}{2}-\frac{n}{p}} $,
        \item[(iii)] $\lnorm m\chi_{\Theta_{2^k r}}\rnorm_2\le C 2^{-\delta k} (2^kr)^{\frac{n}{2}-\frac{n}{p}} $ for some $\delta >0$,
        \item[(iv)] \begin{equation*}
            \labs\int_{\rn} m(x)dx\rabs\le\left\{
            \begin{aligned}
                &C\left(\ln\left(1+\frac{r}{\rho(x_0)}\right)\right)^{-1}, &p=1,\\
                &C\rho(x_0)^{n-\frac{n}{p}},&\frac{n}{n+1}<p<1.
            \end{aligned} \right.
        \end{equation*} 
    \end{itemize}
\end{definition}
Note that the case $p=1$ is already defined in \cite{NonCancelledAtom}.

As expected, any approximate molecule $m$ belongs to the space $H^p_\mathcal{L}$ and $\|m\|_{H^p_\mathcal{L}}\le C$ with $C$ uniform in $x_0$ and $r$. The case that $p=1$ is essentially given in \cite{NonCancelledAtom}, for completeness, we will prove the case when $\frac{n}{n+1}<p\le 1$ following the thoughts in \cite{InhomogeneousCancellation}.

\begin{lemma}\label{noncancellingatom}
    Let $\frac{n}{n+1}<p\leq 1$. Let $\tilde{a}(x) $ be a function supported in $Q_r(x_0)$ with $r\le \gamma_0\rho(x_0)$ $(\gamma_0>1)$ and $\|\tilde{a}\|_2\le Cr^{\frac{n}{2}-\frac{n}{p}}$,   together with the ``approximate cancellation'' condition $$\absvalue{ \int_{\rn} \tilde{a} dx}\le C\rho(x_0)^{n-\frac{n}{p}}.$$ Then we have $\|\tilde{a}\|_{H^p_\mathcal{L}}\le C$ uniformly.
\end{lemma}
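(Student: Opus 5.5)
The natural approach is to split $\tilde a$ into a genuine atom plus a controlled non-cancelled piece spread over a cube of critical size. If $r>\rho(x_0)$ (which is allowed up to $\gamma_0\rho(x_0)$), then $\tilde a$ is already a multiple of an atom by condition (i)–(ii) of the atom definition, since no cancellation is required there. So I will assume $r\le\rho(x_0)$. Set $\lambda=\int\tilde a$, and let $R=\rho(x_0)$ and $Q^\ast=Q_R(x_0)\supset Q_r(x_0)$. I will write
\begin{equation*}
\tilde a=\Big(\tilde a-\frac{\lambda}{|Q_r|}\chi_{Q_r}\Big)+\lambda\Big(\frac{\chi_{Q_r}}{|Q_r|}-\frac{\chi_{Q^\ast}}{|Q^\ast|}\Big)+\frac{\lambda}{|Q^\ast|}\chi_{Q^\ast}.
\end{equation*}
The last term is supported in a cube of radius $\rho(x_0)$. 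Its $L^2$ norm is $|\lambda|R^{-n/2}\le CR^{n-\frac np-\frac n2}$, so it is a bounded multiple of an atom with no cancellation needed.

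The first term has mean zero and is supported in $Q_r$. Its $L^2$ norm is at most $\|\tilde a\|_2+|\lambda||Q_r|^{-1/2}$. The second summand is not obviously bounded by $Cr^{\frac n2-\frac np}$, because $|\lambda|\le CR^{n-n/p}$ may be much larger than $r^{n-n/p}$ when $p<1$. Actually, though, Cauchy–Schwarz gives $|\lambda|\le |Q_r|^{1/2}\|\tilde a\|_2\le Cr^{n-n/p}$. So the first term is a genuine atom with uniform constant.

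The middle term is where the main work lies, since $\lambda$ is only of size $R^{n-n/p}$ rather than $r^{n-n/p}$. I will telescope over dyadic cubes $Q_j=Q_{2^jr}(x_0)$ for $j=0,\dots,J$, where $2^Jr\sim R$:
\begin{equation*}
\frac{\chi_{Q_r}}{|Q_r|}-\frac{\chi_{Q^\ast}}{|Q^\ast|}=\sum_{j=0}^{J-1}\Big(\frac{\chi_{Q_j}}{|Q_j|}-\frac{\chi_{Q_{j+1}}}{|Q_{j+1}|}\Big)+\text{(last step to }Q^\ast).
\end{equation*}
Each bracket has mean zero, is supported in $Q_{j+1}$, and has $L^2$ norm about $(2^jr)^{-n/2}$. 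Hence $\lambda$ times the bracket equals $c_j a_j$ with $a_j$ an atom on $Q_{j+1}$ and $|c_j|\sim|\lambda|(2^jr)^{n/p-n}$. Each $Q_{j+1}$ has radius $\le C\rho(x_0)$, so condition (i) of the atom definition holds. Then
\begin{equation*}
\sum_j|c_j|^p\le C|\lambda|^p\sum_{j\le J}(2^jr)^{n-np}\le C|\lambda|^pR^{n-np}\le C,
\end{equation*}
because for $p<1$ the geometric sum is dominated by its top term $(2^Jr)^{n-np}\sim R^{n-np}$. This step is exactly where the hypothesis $|\lambda|\le C\rho(x_0)^{n-n/p}$ is used.

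For $p=1$ the geometric sum becomes $J\sim\ln(R/r)$ instead. This is why the $p=1$ molecule condition carries a logarithm, and I would treat that case with the analogous bound on $|\lambda|$ as in \cite{NonCancelledAtom}. Combining the three pieces via the atomic characterization $H^p_{\mathcal L,\textup{at}}=H^p_{\mathcal L}$ gives $\|\tilde a\|_{H^p_\mathcal L}\le C$ uniformly. The only delicate point I expect is making sure all cubes stay within the admissible range $\le\gamma_0\rho(x_0)$ and that the constants do not depend on $x_0$ or $r$.
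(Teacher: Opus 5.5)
Your proof is correct for $\frac{n}{n+1}<p<1$, but it takes a genuinely different route from the paper's.

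The paper never decomposes $\tilde a$. It estimates $\mathcal{P}_\rho\tilde a$ directly:
\begin{itemize}
\item the part on $2Q_r(x_0)$ is controlled through the $L^2$ bound for $\mathcal{M}$;
\item the smoothness term $\varphi_t(x-x_0)-\varphi_t(x-\xi)$ uses only $\|\tilde a\|_2$ and $p>\frac{n}{n+1}$;
\item the non-cancelled term $\varphi_t(x-x_0)\int\tilde a$ is integrated against $|x-x_0|^{-np}$ up to distance $2\gamma_0\rho(x_0)$. This costs $C\rho(x_0)^{n-np}$, which is exactly compensated by the hypothesis. Beyond that distance, the truncation $t<\rho(x)$ together with \rref{property of mv} gives integrable decay of order $|x-x_0|^{-(n+1)p}$.
\end{itemize}
You instead transport the mass $\lambda$ dyadically from $Q_r(x_0)$ out to the critical scale and appeal to $H^p_{\mathcal{L},\textup{at}}\subset H^p_{\mathcal{L}}$.

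Your version makes it transparent why the hypothesis is precisely $|\lambda|\lesssim\rho(x_0)^{n-n/p}$: the series $\sum_j(2^jr)^{n-np}$ is dominated by its top term. It also shows why a logarithm must appear at $p=1$, and it is the same kind of telescoping the paper uses in the subsequent molecule lemma. The price is that you rely on the atomic characterization. In particular you need the uniform $\mathcal{P}_\rho$ bound for non-cancelled atoms of radius in $(\rho(x_0),\gamma_0\rho(x_0)]$, which is essentially the paper's $I_2$ computation. The paper's argument is self-contained from the maximal-function definition.

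Two small points.

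First, the paper's atom definition requires $\int a=0$ (condition (iii)) whenever the radius is at most $\rho(x_0)$. So your final piece on $Q_{\rho(x_0)}(x_0)$ is not an admissible non-cancelled atom. Take $R=\gamma_0\rho(x_0)$ instead and stop the chain at $2^Jr\le R<2^{J+1}r$. Then every cube has radius at most $\gamma_0\rho(x_0)$ and the constants depend only on $\gamma_0$.

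Second, at $p=1$ the stated hypothesis reads $|\int\tilde a|\le C$, which already follows from Cauchy--Schwarz, and the statement is then false. For example, $\tilde a=|Q_r|^{-1}\chi_{Q_r(x_0)}$ with $r\ll\rho(x_0)$ has $\|\mathcal{P}_\rho\tilde a\|_1\gtrsim\ln(\rho(x_0)/r)$. The paper's own $p=1$ argument likewise switches to the logarithmic cancellation condition from the molecule definition, with the logarithm read as $\ln(1+\rho(x_0)/r)$. So deferring to that condition is the right call. Under $|\lambda|\le C(\ln(1+\rho(x_0)/r))^{-1}$ your telescoping closes immediately, since $\sum_j|c_j|\le C|\lambda|(J+2)\le C$.
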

\begin{proof}
    Let $\varphi(x)\in C_c^\infty(\rn)$ be a bump function satisfying $\varphi(x)\ge 0, \int_{\rn}\varphi(x)dx=1$ and $\varphi(0)=1$, and $\varphi_t(x)=\frac{1}{t^n}\varphi\brackets{\frac{x}{t}}$ be its dilation. Then $\varphi_t(x)\le \frac{C_N}{t^n\brackets{1+\frac{|x|}{t}}^N}$ for every $N>0$.  For the $L^p$ norm of the $\rho$-local maximal function $\mathcal{P}_\rho \tilde{a}(x)$, we decompose its integral into two parts.
    \begin{align*}
        \int_{2Q_r(x_0)}|\mathcal{P}_\rho\tilde{a}(x)|^p dx\le \int_{2Q_r(x_0)}|\mathcal{M}\tilde{a}(x)|^2 dx\le Cr^{n-\frac{np}{2}}\|a\|_2^p\le C.
    \end{align*}
    Also,% let $R(x)=\max\{r,\rho(x)\}$ we have
    \begin{align*}
        &\int_{\brackets{2Q_r(x_0)}^\complement}|\mathcal{P}_\rho\tilde{a}(x)|^p dx\\\le &\int_{\brackets{2Q_r(x_0)}^\complement}\sup_{0<t<\rho(x)}\absvalue{\int_{Q_r(x_0)}\varphi_t(x-\xi)\tilde{a}(\xi)d\xi}^pdx\\
        \le &\brackets{\int_{2r<|x-x_0|<2\gamma_0\rho(x_0)}+\int_{|x-x_0|\ge2\gamma_0\rho(x_0)}} \sup_{0<t<\rho(x)}\varphi_t(x-x_0)^p\absvalue{\int_{Q_r(x_0)}\tilde{a}(\xi)d\xi}^pdx\\
        &+\int_{|x-x_0|>2r}\sup_{0<t<\rho(x)}\brackets{\int_{Q_r(x_0)}\left|(\varphi_t(x-x_0)-\varphi_t(x-\xi))\tilde{a}(\xi)\right|d\xi}^p dx\\
      \defeq &I_1+I_2+I_3.
    \end{align*}

The term $I_1$ can be controlled by
    \begin{align*}
        I_1\le \int_{|x-x_0|\le C\rho(x_0)}\frac{\rho(x_0)^{n-\frac{n}{p}}} {|x-x_0|^{np} }dx\le C
    \end{align*} for 
     $\frac{n}{n+1}<p< 1$, and when $p=1$,
    \begin{align*}
        I_1\le C\int_{2r<|x-x_0|\le C\rho(x_0)}\frac{1} {|x-x_0|^{n} }dx\left(\ln\left(1+\frac{r}{\rho(x_0)}\right)\right)^{-1}\le C.
    \end{align*}
    For the term $I_2$ we have 
    \begin{align*}
        I_2\le &\int_{|x-x_0|>C\rho(x_0)} \sup_{0<t<\rho(x)}\frac{C\rho(x)^p\rho(x_0)^{np-n}}{\brackets{1+\frac{|x-x_0|}{t}}^{pN}|x-x_0|^{(n+1)p}}dx\\\le& \int_{|x-x_0|>C\rho(x_0)}\frac{C\rho(x_0)^{(n+1)p-n}}{|x-x_0|^{(n+1)p}}dx\le C,
    \end{align*}
    by choosing a sufficient large $N$ for 
     $\frac{n}{n+1}<p\leq 1$.
    
    Finally, we have for 
     $\frac{n}{n+1}<p\leq 1$, 
    \begin{align*}
        I_3 \le &\int_{|x-x_0|>2r}\sup_{t>0}\brackets{\int_{Q_r(x_0)}\left|\varphi_t(x-x_0)-\varphi_t(x-\xi)\right|^2d\xi}^\frac{p}{2}\|\tilde{a}\|^p_2 dx\\
        \le &\int_{|x-x_0|>2r}\frac{r^{p+np-n}}{|x-x_0|^{(n+1)p}}dx\le C.
    \end{align*}
\end{proof}

\begin{lemma}
    Let $\frac{n}{n+1}<p\leq 1$ and $m$ be an approximate molecule defined as above, then $m\in H^p_\mathcal{L}(\rn)$. with $\|m\|_{H^p_\mathcal{L}}\le C$ uniformly.
\end{lemma}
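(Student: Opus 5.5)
The plan is to decompose $m$ into pieces adapted to the dyadic annuli, show that each piece is (a multiple of) either a genuine $2$-atom or an ``approximately cancelled'' function of the type in Lemma \ref{noncancellingatom}, and sum the resulting bounds in the $p$-quasinorm using the decay $2^{-\delta k}$ from (iii).

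\emph{Decomposition.} Write $m_0=m\chi_{Q_r}$ and $m_k=m\chi_{\Theta_{2^{k-1}r}}$ for $k\ge1$, so that $m=\sum_k m_k$ and $m_k$ is supported in $Q_{2^kr}(x_0)$. Let $c_k=\int m_k$ and $\chi_k=|Q_{2^kr}|^{-1}\chi_{Q_{2^kr}(x_0)}$. Set $M_k=\sum_{j\ge k}c_j$, so $M_0=\int m$. Summation by parts then gives
\begin{equation*}
m=\sum_{k\ge0}(m_k-c_k\chi_k)+\sum_{k\ge1}M_k(\chi_k-\chi_{k-1})+M_0\chi_0 .
\end{equation*}

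\emph{Bounding the pieces.}
\begin{itemize}
\item[(a)] By (ii), (iii) and Cauchy--Schwarz, $|c_k|\le C2^{-\delta k}(2^kr)^{n-n/p}$. Hence $\|m_k-c_k\chi_k\|_2\le C2^{-\delta k}(2^kr)^{n/2-n/p}$, and $m_k-c_k\chi_k$ has vanishing mean.
\item[(b)] Since $n-n/p\le0$, we get $|M_k|\le C2^{-\delta k}(2^kr)^{n-n/p}$ as a geometric tail. So $M_k(\chi_k-\chi_{k-1})$ is again $C2^{-\delta k}$ times a cancelled function supported in $Q_{2^kr}$ with the correct $L^2$ size.
\item[(c)] The last term $M_0\chi_0$ is supported in $Q_r$. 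By (iv), $|M_0|\le C\rho(x_0)^{n-n/p}$ when $p<1$, so it is covered directly by Lemma \ref{noncancellingatom}. When $p=1$ it is the log-cancelled case of \cite{NonCancelledAtom}, handled by the same computation as $I_1$ in that lemma.
\end{itemize}

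\emph{Main obstacle.} The cancelled pieces in (a) and (b) are $2$-atoms only when $2^kr\le\rho(x_0)$. For larger $k$ with $2^kr\le\gamma_0\rho(x_0)$, condition (iii) of an atom is not required, so they are still atoms. The real difficulty is $2^kr>\gamma_0\rho(x_0)$, where the supports leave the admissible scale. For these $k$ I would not subtract means at all. Instead, cover $Q_{2^kr}(x_0)$ by cubes $Q_j$ of side $\sim\rho(x_0)\,(2^kr/\rho(x_0))^{-\kappa_0}$; by \rref{property of mv} these sides are $\lesssim\rho$ at their centres. Each $m\chi_{Q_j}$ is then a non-cancelled atom (condition (iii) of an atom is void at scale $\sim\rho$), up to a normalising factor. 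Summing the $p$-th powers with H\"older over the $N_k\sim(2^kr/\rho)^{n(1+\kappa_0)}$ cubes gives
\begin{equation*}
\sum_j\lambda_j^p\le CN_k^{1-p/2}\|m\chi_{\Theta}\|_2^p\,|Q_j|^{p/2-1}(\cdots).
\end{equation*}
This has to be beaten by $2^{-\delta kp}$. I expect this bookkeeping to be the delicate step. If it fails for general $\delta$, I would alternatively estimate $\mathcal{P}_\rho m_k$ directly for these far pieces, as in $I_2$ of Lemma \ref{noncancellingatom}. There the localisation $t<\rho(x)$ and the decay of $\varphi$ give the factor needed off $2Q_{2^kr}$, while on $2Q_{2^kr}$ the crude bound $\int\mathcal{P}_\rho(m_k)^p\le C|Q_{2^kr}|^{1-p/2}\|m_k\|_2^p\le C2^{-\delta kp}$ suffices.

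Finally, summing $\sum_k2^{-\delta kp}<\infty$ yields $\|m\|_{H^p_\mathcal{L}}^p\le C$ uniformly in $x_0$ and $r$.
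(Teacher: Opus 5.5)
Your decomposition is the same telescoping decomposition the paper uses: cancelled pieces on the annuli, cancelled transfer pieces $M_k(\chi_k-\chi_{k-1})$, and a final non-cancelled bump on $Q_r(x_0)$ handled by Lemma \ref{noncancellingatom}. Your bounds (a)--(c) are right. You are also right that the pieces at scales $2^kr>\gamma_0\rho(x_0)$ are not atoms; the paper never addresses this and simply calls all of them atoms ``following the lines of'' the cited work.

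\textbf{Your first fix does not work as stated.} Cubes of side $s_k=\rho(x_0)(2^kr/\rho(x_0))^{-\kappa_0}$ only satisfy $s_k\lesssim\rho(x_j)$, by the lower bound on $\rho$ coming from \rref{property of mv}. They are typically much smaller than $\rho(x_j)$; for $V\equiv1$, $\rho$ is constant. So condition (iii) of an atom is \emph{not} void, and $m\chi_{Q_j}$, which has no reason to have mean zero, is not an atom. Paying for the missing cancellation via Lemma \ref{noncancellingatom} costs a factor of order $(\rho(x_j)/s_k)^{n(1-p)}$, e.g.\ $(2^kr/\rho(x_0))^{\kappa_0 n(1-p)}$. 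When $p<1$ and $\delta$ is small, $2^{-\delta kp}$ need not absorb this.

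The H\"older bookkeeping you worried about is not the issue. The correct normalisation is $\lambda_j^p=\|m\chi_{Q_j}\|_2^p|Q_j|^{1-p/2}$ (your display has the exponent $p/2-1$). With it, $\sum_j\lambda_j^p\lesssim(2^kr)^{n(1-p/2)}\|m\chi_{\Theta}\|_2^p\lesssim 2^{-\delta kp}$ for any side length. A covering argument would work with bounded-overlap critical cubes $Q_{\rho(x_j)}(x_j)$, where (iii) really is void, but not with uniform cubes of the minimal size.

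\textbf{Your fallback is correct and closes the gap.} Take any $g$ supported in $Q_R(x_0)$ with $R\ge\rho(x_0)$ and $\|g\|_2\le\epsilon R^{n/2-n/p}$.
On $Q_{2R}(x_0)$, your crude bound gives a contribution $\le C\epsilon^p$ to $\|\mathcal{P}_\rho g\|_p^p$.
Off $Q_{2R}(x_0)$, for $t<\rho(x)$ one has $|\varphi_t*g(x)|\le C_N\rho(x)^{N-n}|x-x_0|^{-N}\|g\|_1$.
The upper bound on $\rho$ from \rref{property of mv} gives $\rho(x)/|x-x_0|\lesssim(\rho(x_0)/|x-x_0|)^{1/(\kappa_0+1)}$ for $|x-x_0|\ge\rho(x_0)$.
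With $N$ large the outer integral is then $\le C\epsilon^p(\rho(x_0)/R)^{c}\le C\epsilon^p$.

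This estimate uses no cancellation. So it applies verbatim, with $\epsilon\sim 2^{-\delta k}$, to the far pieces of your full summation by parts, and $p$-subadditivity of $\|\mathcal{P}_\rho\cdot\|_p^p$ finishes the proof. If instead you stop subtracting means beyond the index $K$ with $2^Kr\sim\gamma_0\rho(x_0)$, the coefficient of $\chi_0$ becomes $\sum_{j\le K}c_j$. This differs from $\int m$ by $O(2^{-\delta K}\rho(x_0)^{n-n/p})$, which is harmless in (c).

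With the fallback adopted your argument is complete, and it supplies the large-scale step the paper omits. For $p=1$, the computation of $I_1$ that you invoke in (c) only closes if the logarithm in (iv) is read as $\ln(1+\rho(x_0)/r)$.
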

\begin{proof}
     We temporarily write $Y_1=Q_{r}(x_0) $  and $Y_k=\Theta_{2^k r}(x_0)$ when $k\ge 2$ for convenience. We decompose $m$ into three parts by
     \begin{align*}
         m(x)=&\sum_{k=1}^\infty\brackets{\brackets{m(x)-\Avg{Y_k} m}\chi_{Y_{k+1}}(x)+\brackets{\frac{\chi_{Y_{k+1}}(x)}{|Y_k|}-\frac{\chi_{Y_k}(x)}{|Y_k|}}\int_{Y_k^\complement}md\xi}+\frac{\chi_{Y_1}(x)}{|Y_1|}\int_{\rn} md\xi
         \\\defeq & \sum_{k=1}^\infty (m_{1,k}(x)+m_{2,k}(x))+m_3(x).
     \end{align*}
     We can prove that $m_{1,k}, m_{2,k}$ are $H^p_\mathcal{L}$ atoms with $\|m_{1,k}(x)\|_{H^p_\mathcal{L}}+\|m_{2,k}\|_{H^p_\mathcal{L}}\le C2^{-k\delta^\prime}$ following the lines of \cite{NonCancelledAtom}. It is obvious that $m_3$ satisfies all the assumptions of Lemma \ref{noncancellingatom}, thus the proof is done.
\end{proof}

The Sobolev space $\dot{W}^{1,p}_V(\rn)=\dot{W}^{1,p}(\rn)\cap L^p\left(\rn,V^\frac{p}{2}dx\right)$ associated with $\mathcal{L}$ has a corresponding endpoint space, the Hardy-Sobolev space $H^{1,p}_V$ defined as 
\begin{equation*}
    H^{1,p}_V(\rn)=\left\{f \mid \|f\|_{H^{1,p}_V}\defeq\|\nabla f\|_{H^p}+\lnorm V^\frac{1}{2}f\rnorm_p<\infty \right\},
\end{equation*}

Adapting the same arguments of \cite[Thm. 4.4]{RegSchrodingerTao}, we can give a characteristic of   $H^{1,p}_V(\rn)$ as follows:
For $\frac{n}{n+1}<p\le 1$,  any distribution $g$ in $H^{1,p}_V(\rn)$ , we can write as
$$g=\sum_{k\in\mathcal{Z}}\lambda_k a_k,$$
the sum converges in $H^{1,p}_V(\rn)$ sense, where each $a_k$, is an atom
 satisfying the following conditions
\begin{itemize}
    \item [(i)]$\supp a_k(x) \subset Q_{r_k}(x_k)$, with $r_k<C\rho(x_k)$, %with $r\le \gamma_0\rho(x_k)$ for some $\gamma_0>1$;
    \item [(ii)]$\|\nabla a_k\|_2+\left\|V^{\frac 12}a_k\right\|_2\le C r_k^{\frac{n}{2}-\frac{n}{p}} $.
\end{itemize}
Moreover, $\{\lambda_k\}\in l^p$ and
$$\sum_{k\in\mathcal{Z}}|\lambda_k|^p\le C\|g\|^p_{H^{1,p}_V(\rn)}$$
with the constant $C$ independent of $g$. Also note that for $1<p<\infty$ the space $H^{1,p}_V(\rn)=\dot{W}^{1,p}_V(\rn)$ is exactly the homogeneous Sobolev space.

Finally we discuss the endpoint space serving as an alternative for $L^\infty$. Consider the Campanato space $\Lambda^\beta _{\mathcal{L},p} $ associated with $\mathcal{L}$  defined for $1\leq p<\infty$ as
\begin{align*}
    \Lambda^\beta _{\mathcal{L},p}(\rn)\defeq &\left\{f \mid\|f\|^p_{\Lambda^\beta _{\mathcal{L},p}}\defeq \sup_{r<\rho(x)}\frac{1}{r^{\beta }}\fint_{ Q_r(x)}\left|f(\xi)-\Avg{Q_r(x)} f\right|^pd\xi\right.\\&\left.+\sup_{r\ge \rho(x)}\frac{1}{r^{2\beta }}\fint_{ Q_r(x)} |f(\xi)|^p d\xi< \infty\right\}.
\end{align*}
We write $\Lambda^\beta _{\mathcal{L}}(\rn)=\Lambda^\beta _{\mathcal{L}}(\rn)$. From proposition 3 in \cite{BMOLbongioanni}, we know that 
$$\|f\|_{\Lambda^\beta _{\mathcal{L}}(\rn)}\sim \|f\|_{\Lambda^\beta _{\mathcal{L},p}(\rn)},\quad 1\leq p<\infty.$$
Note that $\Lambda^\beta_\mathcal{L}(\rn) $ is a subspace of $ \Lambda^\beta(\rn)$,  and the space is exactly the $BMO_\mathcal{L}$ space when $ \beta=0$. We write $\Lambda_\mathcal{L}^\beta=\Lambda_{\mathcal{L},2}^\beta$.

%\section{The Space $H^p_L$}\label{section: endpoint spaces and consequences}

%\subsection{Approximate Molecule}

%In this subsection we investigate more on the functions contained in the space $H^p_L $. 

\section{Estimates for Layer Potentials}\label{section: estimates for layer potentials}

\subsection{$ L^p$ Maximal Bounds for Layer Potentials}
We first study the $L^p$ bounds for the maximal function when $p>1$. The operator $\partial_t \calS  $ is clearly a Calder\'on-Zygmund operator due to its kernel estimates, thus it has the maximal function estimate
\begin{equation}
    \lnorm \calN (\partial_t \calS f)\rnorm_p\le C \brackets{1+\sup_{t>0}\lnorm \partial_t\calS f(\cdot,t)\rnorm_p}\|f\|_p
\end{equation}
for $ 1<p<\infty$. Denote the ``tangential total derivative'' $\nabla _{x,V}u=\left(\nabla_{\tan}u,V^\frac{1}{2}u\right)$, it is not so obvious that $\tilN(\nabla_{x,V}\calS f)$  also satisfies a similar estimate. However we can still prove its $L^p$ boundedness.
\begin{theorem}\label{maximalgradientlayerpotential}For $1 < p<\infty$, we have
\begin{equation*}
    \lnorm \tilN(\nabla_{x,V}\calS f) \rnorm_p \le C \sup_{t\ge 0}\lnorm \nabla_{x,V} \calS f(\cdot,t) \rnorm_p+C\lnorm \calN(\partial _t \calS f)\rnorm_p.
\end{equation*}    
\end{theorem}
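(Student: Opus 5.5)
The natural model is the argument of \cite{MethodOfLayerPotential} (and \cite{AAAHK}) for $\tilN(\nabla\calS f)$, with the extra component $V^{1/2}\calS f$ handled alongside $\nabla_{\tan}\calS f$. I fix $x\in\rn$ and a point $(x',t')$ with $|x'-x|<t'$. Write $u=\calS f$, which solves $\mathcal{L}u=0$ in $\rr_+$. The task is to bound the Whitney average
\[
\Big(\fint_{Q_{t'/4}(x',t')}|\nabla_{x,V}u|^2\Big)^{1/2}
\]
pointwise by a maximal function of the boundary-type quantities on the right-hand side.

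\textbf{Step 1 (reduce to horizontal slices).} Because $A$ and $V$ do not depend on $t$, I move from the Whitney cube to horizontal slices using the fundamental theorem of calculus in $t$. For $\tau\in(t'-t'/4,t'+t'/4)$,
\[
\nabla_{x,V}u(\xi,\tau)=\nabla_{x,V}u(\xi,s)+\int_s^\tau \nabla_{x,V}\partial_t u(\xi,\sigma)\,d\sigma .
\]
I average this in $s$ over a slightly larger interval. The $\partial_t$ term is a solution derivative, namely $\partial_t u=\partial_t\calS f$, which also solves the equation. Caccioppoli's inequality and Lemma \ref{basiclemma} (second inequality, applied to $\partial_tu$) then give
\[
t'\Big(\fint_{\text{enlarged cube}}|\nabla_V\partial_t u|^2\Big)^{1/2}\le C\Big(\fint_{\text{enlarged cube}}|\partial_t u|^2\Big)^{1/2}\le C\,\calN(\partial_t\calS f)(x).
\]
This works as long as the enlarged cube still lies in a cone of slightly larger aperture. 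I will accept changing the aperture here, since the $L^p$ norms of $\calN$ for different apertures are comparable.

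\textbf{Step 2 (control the slice term).} What remains is
\[
\Big(\fint_{t'/2}^{2t'}\fint_{Q_{t'/2}(x')}|\nabla_{x,V}u(\xi,s)|^2\,d\xi\,ds\Big)^{1/2}.
\]
The difficulty is that this is only an $L^2$ average. Bounding it by $\sup_s\mathcal{M}(|\nabla_{x,V}u(\cdot,s)|^2)^{1/2}(x)$ would require $p>2$ to get an $L^p$ bound. To avoid this I use Lemma \ref{basiclemma} on slices, which is the key step: its first inequality, combined with reverse H\"older self-improvement for $\nabla_V u$ (a Gehring-type argument), should allow me to replace the $L^2$ average on the cube by an $L^{r}$ average with $r<p$, for instance $r=1$. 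This is done first for the full $\nabla_V u$ and then splitting off the $\partial_t$ part, which Step 1 already handles. The slice term is then bounded by
\[
\sup_{s>0}\mathcal{M}_r\big(\nabla_{x,V}u(\cdot,s)\big)(x)+C\,\calN(\partial_t u)(x).
\]

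\textbf{Step 3 (remove the supremum).} The supremum over $s$ outside the maximal function still has to go; its $L^p$ norm is not obviously bounded by $\sup_s\|\nabla_{x,V}u(\cdot,s)\|_p$. I expect this to be the main obstacle. My first attempt would be the standard trick of \cite{MethodOfLayerPotential}. Write
\[
u(\cdot,s)=u(\cdot,s_0)+\int_{s_0}^{s}\partial_t u\,d\sigma ,
\]
apply it on the Whitney scale, and use interior H\"older/Caccioppoli estimates for $\partial_t u$. Since $\nabla_{x,V}$ does not commute with integrating $\partial_tu$ unless one differentiates it, I would handle this tangentially through a Caccioppoli estimate on slabs. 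For the $V^{1/2}$ part I would use the local Fefferman--Phong inequality \rref{localfeffermanphong} together with \rref{zeta}. The slab Caccioppoli estimate bounds $\nabla_x$ of the $\partial_tu$-increment by $\calN(\partial_t u)$. The upshot should be a pointwise bound
\[
\tilN(\nabla_{x,V}u)(x)\le C\,\mathcal{M}_r\big(\nabla_{x,V}u(\cdot,\epsilon)\big)(x)+C\,\calN(\partial_tu)(x),
\]
uniformly in small $\epsilon$ (more precisely, comparing with a fixed slice at height $\sim t'$). Taking $L^p$ norms with $r<p$, applying the Hardy--Littlewood maximal theorem, and then taking the supremum over $\epsilon$ gives the theorem.
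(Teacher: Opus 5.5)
There is a genuine gap, and it sits in Step 3, which is the heart of the theorem. The mechanism you sketch there transports the \emph{gradient} from height $s\sim t'$ down to height $\epsilon$. That requires controlling $\int_\epsilon^s\nabla_x\partial_t u(\cdot,\sigma)\,d\sigma$. Interior estimates for the solution $\partial_tu$ only bound $\nabla_x\partial_tu$ at height $\sigma$ by something of size $\sigma^{-1}\calN(\partial_tu)$, so this route gives $\log(t'/\epsilon)\,\calN(\partial_tu)$, which is not uniform in $\epsilon$.

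A slab Caccioppoli estimate for the increment $w(\xi,\tau)=u(\xi,\tau)-u(\xi,s_0)$ is not available either, because $w$ is not a solution. Indeed $\mathcal{L}w=\textup{div}_x\big(A_{\parallel\parallel}\nabla_x u(\cdot,s_0)\big)-V\,u(\cdot,s_0)$, where $A_{\parallel\parallel}$ is the upper-left $n\times n$ block of $A$. Caccioppoli for $w$ therefore reintroduces an $L^2$ average of $\nabla_{x,V}u(\cdot,s_0)$ over $Q_{cr}(x')$, i.e. $\mathcal{M}_2$, which is exactly what Step 2 was meant to avoid. Your fallback of ``comparing with a fixed slice at height $\sim t'$'' is circular: that slice depends on the Whitney cube, so taking the supremum over cubes brings back $\sup_s\mathcal{M}_r(\nabla_{x,V}u(\cdot,s))$. (Separately, Gehring's lemma raises integrability. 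Bounding the $L^2$ average of $\nabla_Vu$ by an $L^1$ average is a different statement, and with $V$ present it is not a formal self-improvement, since subtracting constants is not free.)

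The missing idea is to transport the function rather than its gradient, and to differentiate only once, at the Whitney scale.
\begin{itemize}
\item On $Q_r(x',t')$ with $r=\theta t'$, Caccioppoli bounds the average of $\nabla_{x,V}u$ by $\frac{C}{r}\big(\fint_{Q_{3r/2}}|u-C_Q|^2\big)^{1/2}$. When $r\fint V^{1/2}$ is small there is an extra term $C(\fint V)^{1/2}|C_Q|$, because then $\mathcal{L}(u-C_Q)=-VC_Q$.
\item The $L^2$ average of $u-C_Q$ is reduced to an $L^1$ average. For $r\fint V^{1/2}\gtrsim1$ this uses De Giorgi--Nash--Moser; otherwise it uses the Poincar\'e--Sobolev absorption argument \rref{keyinequality31}.
\item Next, $|u(\xi,\tau)-u(\xi,0)|\le\tau\,\calN(\partial_tu)(\xi)\lesssim r\,\calN(\partial_tu)(\xi)$. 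The factor $r$ cancels the $1/r$ from Caccioppoli with no logarithmic loss.
\item What remains is $\frac1r\fint_{Q_{2r}(x')}|u(\xi,0)-C_Q|\,d\xi$. Poincar\'e on the boundary slice bounds this by $\fint_{Q_{2r}(x')}|\nbltg u(\cdot,0)|$.
\item The constant term is $|C_Q|/r$, or $(\fint V)^{1/2}|C_Q|\lesssim|C_Q|\fint V^{1/2}$ via $V^{1/2}\in\mathcal{B}_{2q}$. The local Fefferman--Phong inequality \rref{localfeffermanphong} with exponent $1$ bounds it by $\fint_{Q_{2r}(x')}|\nabla_{x,V}u(\cdot,0)|$.
\end{itemize}
Together these give the pointwise bound $\tilN(\nabla_{x,V}u)\le C\mathcal{M}(\nabla_{x,V}u(\cdot,0))+C\mathcal{M}(\calN(\partial_tu))$, which involves only the slice $t=0$. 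The theorem then follows from the Hardy--Littlewood maximal theorem for every $p>1$. No supremum over slices ever sits inside a maximal function, so the obstacle you identified in Step 3 never arises.
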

\begin{proof}
    Write $u=\calS f$, $X^\prime=(x^\prime,t^\prime)$. Pick a Whitney cube $ Q_r(x^\prime,t^\prime)$ where $|x^\prime-x|<\gamma t^\prime$ and $r=\theta t^\prime$. We need to discuss different cases with respect to the side length $r$.

    \begin{itemize}
        \item [Case 1.]$\fint_{Q_{2\sigma r}(x^\prime)} V^\frac{1}{2} d\xi>\frac{1}{2\sigma  r} $, where $\sigma>1$ is a constant such that $2\sigma \theta<1$. By Caccioppoli's inequality and \rref{degiorginash} we have
    \begin{align*}
        &\brackets{\fint_{Q_r(X^\prime)}|\nabla_{x,V}u|^2 d\xi d\tau}^\frac{1}{2} \\\le &\frac{C}{r}\brackets{\fint_{Q_{\frac{3}{2} r}(X^\prime)}|u|^2 d\xi d\tau}^\frac{1}{2}\\\le& \frac{C}{ r}\brackets{\fint_{Q_{2  r}(X^\prime)}|u-C_Q|d\xi d\tau}+\frac{C|C_Q|}{r}
        \\\le &\frac{C}{ r}\fint_{Q_{2 r}(X^\prime)}|u-u(\xi,0)|d\xi d\tau+\frac{C}{ r}\fint_{Q_{2 r}(x^\prime)}|u(\xi,0)-C_Q|d\xi+\frac{C|C_Q|}{ r}\\
        \defeq& I_1+I_2+I_3.
    \end{align*}

 Taking $C_Q=\Avg{Q_{2 r}(X^\prime)}u(x,0)$, we can deal with $I_2$ by Poincar\'e inequality to get

 \begin{align}\label{dealing with I2}
     I_2\le C\fint_{Q_{2r}(x^\prime)}|\nbltg u(\xi,0)|d\xi\le C\mathcal{M}(\nbltg u(\cdot,0))(x),
 \end{align}
 and by \rref{localfeffermanphong} we have
 \begin{align*}
     I_3\le C\fint_{Q_{2r}(x^\prime)}|\nabla_V u(\xi,0)|d\xi\le C\mathcal{M}(\nabla_{x,V} u(\cdot,0))(x).
 \end{align*}
 
    For $I_1$ we have 
    \begin{align}\label{dealing with I1}
        |u(\xi,\tau)-u(\xi,0)|\le& \int_0^\tau |\partial_t u(\xi,\sigma) |d\sigma \le Cr\calN(\partial_t u)(\xi),
    \end{align}
thus integrating on $Q_{2 r}(X^\prime)$ we get  $I_1\le \mathcal{M}(\partial_t u)(x).$

        \item[Case 2.]$\fint_{Q_{2 r}(x^\prime)} V^\frac{1}{2} d\xi\le \frac{1}{2\sigma  r} $, we use the Caccioppoli's inequality with regard to $u-C_Q$, satisfying $\mathcal{L}(u-C_Q)=-V^\frac{1}{2}V^\frac{1}{2}C_Q=\DIV_V\left(0,V^\frac{1}{2}C_Q\right)^\dag$, where $\DIV_V =\left(\nabla,-V^\frac{1}{2}\right)$ . We have
        \begin{align*}
            &\brackets{\fint_{Q_r(X^\prime)}|\nabla_{x,V}u|^2 d\xi d\tau}^\frac{1}{2} \\\le &\frac{C}{r}\brackets{\fint_{Q_{\frac{3}{2} r}(X^\prime)}|u-C_Q|^2 d\xi d\tau}^\frac{1}{2}+{C}\brackets{\fint_{Q_{\frac{3}{2} r}(X^\prime)}V(\xi)d\xi}^\frac{1}{2}|C_Q|.
        \end{align*}

        If we can establish \begin{align}
            \brackets{ \fint_{Q_{\frac{3}{2} r}(X^\prime)}|u-C_Q|^2 d\xi d\tau}^\frac{1}{2}\le C\fint_{Q_{2  r}(X^\prime)}|u-C_Q|d\xi d\tau+{C}r\brackets{\fint_{Q_{2 r}(x^\prime)}Vd\xi}^\frac{1}{2}|C_Q|,\label{keyinequality31}
        \end{align}

        Then we get
        \begin{align*}
            &\brackets{\fint_{Q_r(X^\prime)}|\nabla_{x,V}u|^2 d\xi d\tau}^\frac{1}{2} \\\le &C\fint_{Q_{2 r}(X^\prime)}|u-u(\xi,0)|d\xi d\tau\\&+\frac{C}{ r}\fint_{Q_{2 r}(x^\prime)}|u(\xi,0)-C_Q|d\xi+{C}\brackets{\fint_{Q_{2 r}(x^\prime)}Vd\xi}^\frac{1}{2}|C_Q|\\
   \defeq& J_1+J_2+J_3.
        \end{align*}

       Taking $C_Q=\Avg{Q_{2 r}(x^\prime)}u$, while $J_1$ and $J_2$ can be handled similarly to \rref{dealing with I1} and \rref{dealing with I2} in the previous case, the term $J_3$ can be reduced by the property $V^\frac{1}{2}\in \mathcal{B}_{2q}$ to
        \begin{align*}
            J_3\le C\fint_{Q_{2 r}(x^\prime)}|\nabla_{x,V}u(\xi)| d\xi\le C\mathcal{M}(\nabla_{x,V}u(\cdot,0))(x).
        \end{align*}

        Now we start proving \rref{keyinequality31}.  For a fixed center denoted by $Y=(y,s)\in \rnp_+$ we write $Q_\varrho(Y)=Q_\varrho$ for the sake of convenience. For any $r_1,r_2\in\left[\frac{3}{2} r,2 r\right] $ and $r_1<r_2$, letting $\tilde{r}=\frac{r_1+r_2}{2}$ then using Poincar\'e-Sobolev inequality on the function $(u-C_Q)\phi(x)$ where $\phi$ is a smooth bump on $Q_{\tilde{r}}$ and being $1$ on $Q_{r_1}$, we have
        \begin{align*}
            &\brackets{\fint_{Q_{r_1}}|u-C_Q|^\frac{2(n+1)}{n-1} d\xi d\tau}^\frac{n-1}{2(n+1)}\\\le &Cr\brackets{ \fint_{Q_{\tilde{r}}}|\nabla u|^2 d\xi d\tau}^\frac{1}{2}+\frac{Cr}{\tilde{r}-r_1}\brackets{ \fint_{Q_{\tilde{r}}}| u-C_Q|^2 d\xi d\tau}^\frac{1}{2}\\%&+{C|C_Q|r}\brackets{\fint_{Q_{\tilde{r}}}V(\xi)d\xi}^\frac{1}{2}\\
            \le &\frac{Cr}{r_2-r_1}\brackets{\fint_{Q_{r_2}}| u-C_Q|^2 d\xi d\tau}^\frac{1}{2}+{C|C_Q|r}\brackets{\fint_{Q_{r_2}(y)}V(\xi)d\xi}^\frac{1}{2}\\
            \le &\frac{C\epsilon r}{r_2-r_1}\brackets{\fint_{Q_{r_2}}|u-C_Q|^\frac{2(n+1)}{n-1} d\xi d\tau}^\frac{n-1}{2(n+1)}+\frac{C}{\epsilon^\frac{n+1}{2}} \fint_{Q_{r_2}}|u-C_Q| d\xi d\tau\\&+{C|C_Q|r}\brackets{\fint_{Q_{r_2}}V(\xi)d\xi}^\frac{1}{2}\\
            \le &\frac{C\epsilon r }{r_2-r_1}\brackets{ \fint_{Q_{r_2}}|u-C_Q|^\frac{2(n+1)}{n-1} d\xi d\tau}^\frac{n-1}{2(n+1)}+Cr\brackets{\fint_{Q_{2r}}V(\xi)d\xi}^\frac{1}{2}|C_Q|\\&+\frac{C}{\epsilon^\frac{n+1}{2} } \fint_{Q_{r_2}}|u-C_Q| d\xi d\tau.
        \end{align*}
Write $f(\varrho)=\brackets{\frac{1}{\varrho^{n+1}} \int_{Q_{\varrho}(Y)}|u-C_Q|^\frac{2(n+1)}{n-1} d\xi d\tau}^\frac{n-1}{2(n+1)}$, taking $\epsilon=\frac{r_1-r_2}{2Cr}$, we have 
\begin{align*}
    f(r_1)\le &\frac{1}{2}f(r_2)+Cr\brackets{\fint_{Q_{2r}(y)}V(\xi)d\xi}^\frac{1}{2}|C_Q|\\+&\frac{C}{(r_2-r_1)^ {\frac{n+1}{2}  }r^{\frac{n+1}{2}} } \int_{Q_{2 r}(Y)}|u-C_Q| d\xi d\tau.
\end{align*}
From this, by the well-known result in \cite{GilbargTrudinger} regarding function inequalities in this form, we can get
\begin{align*}
            &\brackets{\fint_{Q_{3r/2}}|u-C_Q|^\frac{2(n+1)}{n-1} d\xi d\tau}^\frac{n-1}{2(n+1)}
\\\le &C\fint_{Q_{2  r}(X^\prime)}|u-C_Q|d\xi d\tau+{C}{r}\brackets{\fint_{Q_{2 r}(x^\prime)}Vd\xi}^\frac{1}{2}|C_Q|. \end{align*}
Thus, \rref{keyinequality31} is proved.
        
    \end{itemize}

    To conclude for both cases above, we can get 
    $$\tilN(\nabla_{x,V}u)(x)\le C\mathcal{M}\left(\nabla_{x,V}u(\cdot,0)\right)(x)+C\mathcal{M}(\calN(\partial_t u))(x)$$ and the desired inequality follows.
\end{proof}

As a consequence of Theorem 5.8 in \cite{MorrisTurner} and Theorem \ref{maximalgradientlayerpotential}, we have
 \begin{equation}\label{maxi}
        \lnorm\tilN(\nabla_V \calS f)\rnorm _{2} \le  C\|f\|_{2}.
    \end{equation}

\begin{remark}
    In fact \rref{maxi} has already been proved in \cite[Thm. 1.3]{MorrisTurner}, by noticing the uuniform $L^2$ bound of $\nabla_V\calS f(\cdot,0)$. Also note that in the proof of Theorem \ref{maximalgradientlayerpotential} we have no mention of local boundedness of $u$ in Case 2. Using similar methods  we can also get rid of local boundedness condition in Case 1.
\end{remark}    

\subsection{Single Layer Potential when $1-\varepsilon^\prime <p\le 2$}

\begin{theorem}\label{maximalhardy}There exists constant $C$ and $0<\varepsilon^\prime <\frac{1}{n+1}$ such that  for $1-\varepsilon^\prime <p\le 1 $,
    \begin{equation*}
        \lnorm\tilN(\nabla_V \calS f)\rnorm _{p} \le  C\|f\|_{H^p_\mathcal{L}}.
    \end{equation*}
\end{theorem}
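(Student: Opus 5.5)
We reduce the estimate to atoms. Since $H^p_{\mathcal{L}}=H^p_{\mathcal{L},\textup{at}}$ and $p\le 1$, the map $f\mapsto \tilN(\nabla_V\calS f)$ is sublinear, so $\|\sum\lambda_k\tilN(\nabla_V\calS a_k)\|_p^p\le\sum|\lambda_k|^p\|\tilN(\nabla_V\calS a_k)\|_p^p$. It therefore suffices to prove $\|\tilN(\nabla_V\calS a)\|_p\le C$ uniformly over $H^p_\mathcal{L}$ $2$-atoms $a$ supported in $Q_r(x_0)$, $r\le\gamma_0\rho(x_0)$. A limiting argument then passes from finite sums to general $f$, using the $L^2$ bound \rref{maxi} for the finite sums. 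We split $\rn$ into the local region $Q_{8r}(x_0)$ and its complement. On the local region, H\"older's inequality and \rref{maxi} give
\[
\int_{Q_{8r}(x_0)}\tilN(\nabla_V\calS a)^p\le Cr^{n(1-\frac p2)}\|a\|_2^p\le C .
\]

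For $|x-x_0|\ge 8r$, every Whitney cube $Q_{\theta t'}(x',t')$ in the cone at $x$ is at distance $\gtrsim |x-x_0|$ from $\supp a$ or has $t'\gtrsim |x-x_0|$. We distinguish two kinds of atoms. If $r\le\rho(x_0)$, then $\int a=0$ and we write
\[
\nabla_V\calS a(X)=\int\bigl(\nabla_V\Gamma(X|\xi,0)-\nabla_V\Gamma(X|x_0,0)\bigr)a(\xi)\,d\xi .
\]
We estimate the $L^2$ average of this over the Whitney cube. The second bound of Lemma \ref{basiclemma} reduces the gradient average to $1/t'$ times an $L^2$ average of $\calS a$ on a doubled cube. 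Applying the H\"older continuity of $\Gamma$ in the pole variable (the second kernel estimate, valid for the adjoint as noted) and Minkowski's inequality gives the pointwise bound
\[
\tilN(\nabla_V\calS a)(x)\le C\,\frac{r^{\alpha}\,r^{n-\frac np}}{|x-x_0|^{n+\alpha}}\,\bigl(1+|x-x_0|m_V(x_0)\bigr)^{-N}.
\]
This is $p$-integrable away from $Q_{8r}$ exactly when $p(n+\alpha)>n$, i.e. $p>\frac{n}{n+\alpha}$. We therefore take $\varepsilon'<\frac{\alpha}{n+\alpha}$, which is also $<\frac1{n+1}$ after shrinking $\varepsilon'$ if needed.

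If instead $\rho(x_0)<r\le\gamma_0\rho(x_0)$, the atom has no cancellation. In this case we use the decay factor in the kernel bounds, $|\nabla_V\Gamma|\lesssim (1+|X-\xi|m_V)^{-N}|X-\xi|^{-n}$, obtained through Lemma \ref{basiclemma} from the pointwise estimate on $\Gamma$. Since $m_V(\xi)\sim m_V(x_0)\sim 1/r$ on the support by \rref{property of mv}, this yields
\[
\tilN(\nabla_V\calS a)(x)\le C\,r^{n-\frac np}\,|x-x_0|^{-n}\,(|x-x_0|/r)^{-N}.
\]
This bound is integrable to the power $p$ for any $p>0$.

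The main obstacle is the tangential-plus-potential part near the boundary. For a Whitney cube with $t'$ small but $x'$ far from $Q_r(x_0)$, the gradient of $\calS a$ must be controlled by a local solution estimate rather than by the kernel bound alone, and $\nabla_V$ includes the term $V^{1/2}\calS a$. I would handle $V^{1/2}\calS a$ through the pointwise decay of $\calS a$ together with \rref{integrability of V}, using $s=\frac12$. The gradient part I would handle through Caccioppoli's inequality applied to $\calS a-\calS a(X_0)$, in the spirit of Case 2 of Theorem \ref{maximalgradientlayerpotential}. For the nonvanishing-average contributions I would use the difference estimates of Lemma \ref{differencefundamentalsolution} comparing $\Gamma$ with $\Gamma_0$, which keep the H\"older gain $\alpha$ at scales below $\rho(x_0)$. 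Summing the two atom types then gives the theorem for $1-\varepsilon'<p\le1$.
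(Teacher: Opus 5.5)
Your atomic reduction, the local estimate on $Q_{8r}(x_0)$ via \rref{maxi}, and your decay bounds are correct for Whitney cubes with $t'\gtrsim|x-x_0|$. This is exactly how the paper treats the ``far'' part $\tilN_2$.

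\textbf{The gap.} You claim a pointwise bound for all of $\tilN(\nabla_V\calS a)(x)$ when $|x-x_0|\ge 8r$, and this fails for cubes near the boundary. For a cube $Q_{\theta t'}(x',t')$ with $t'\ll|x-x_0|$, the tools you name give the following.
\begin{itemize}
\item Caccioppoli (or Lemma \ref{basiclemma}) at the scale of the cube yields only $\frac{1}{t'}\,r^{\alpha+n-\frac np}|x-x_0|^{1-n-\alpha}$, which blows up as $t'\to 0$.
\item Subtracting $\calS a(X_0)$ and using H\"older continuity only improves the factor to $t'^{\alpha-1}$. This is still unbounded, since $\alpha<1$.
\item Lemma \ref{basiclemma} applied at scale $|x-x_0|$ controls slice averages over cubes of that size, not over the small cube $Q_{\theta t'}(x')$.
\end{itemize}
In fact no pointwise bound can hold there. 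For merely bounded measurable $A$ there is no pointwise control of $\nabla_x\calS a$, only Meyers-type $L^{2+\varepsilon}$ integrability. Also $V\in\mathcal{B}_q$ may be unbounded. So these small-cube averages behave like a local maximal function at $x$.

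For the same reason, \rref{integrability of V} is an integral estimate and gives no pointwise bound on $\fint_{Q_{\theta t'}(x')}V$. The kernel bound $|\nabla_V\Gamma|\lesssim(1+|X-\xi|m_V)^{-N}|X-\xi|^{-n}$ that you use in the non-cancellative case is not available pointwise either. Lemma \ref{differencefundamentalsolution} plays no role in this proof. You flag this as ``the main obstacle'', but none of the tools you list closes it.

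\textbf{What is missing.} The near-boundary part $\tilN_1$ (cubes with $t'<|x-x_0|$) needs an integrated estimate rather than a pointwise one. The paper writes
$\calS a-C_Q=\bigl(\calS a(\xi,\tau)-\calS a(\xi,0)\bigr)+\bigl(\calS a(\xi,0)-C_Q\bigr)$,
where $C_Q$ is the average of $\calS a(\cdot,0)$ over $Q_{c\rho}(x')$ and $\rho=\theta t'$. Each piece is then handled as follows.
\begin{itemize}
\item The vertical difference is at most $\rho\sup|\partial_t\calS a|$. This does have pointwise decay, because $\partial_t\Gamma$ does (by $t$-independence), so it cancels the factor $1/\rho$ from Caccioppoli.
\item The boundary oscillation is bounded via Poincar\'e--Sobolev by $\rho\,\mathcal{M}\bigl(|\nbltg\calS a(\cdot,0)|^{\frac{2n}{n+2}}\bigr)^{\frac{n+2}{2n}}(x)$.
\item The term $|C_Q|/\rho$, or the $V$-weighted term when $\fint_{Q_{c\rho}(x')}V^{\frac12}\le\frac{1}{c\rho}$, is bounded via \rref{localfeffermanphong} by $\mathcal{M}(\nabla_V\calS a(\cdot,0))(x)$.
\end{itemize}
Since $\calS a$ solves $\mathcal{L}u=0$ across $t=0$ away from $\supp a$, Caccioppoli and the decay of $\calS a$ give
$\int_{\Theta_{2^kr}(x_0)}\tilN_1(\nabla_V\calS a)^2\,dx\le C2^{-k(n+2\alpha)}r^{n-\frac{2n}{p}}$,
with extra $m_V$-decay when $r>\rho(x_0)$. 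H\"older's inequality on the annulus then gives $\int_{\Theta_{2^kr}(x_0)}\tilN_1(\nabla_V\calS a)^p\,dx\le C2^{k(n-np-\alpha p)}$. This is summable precisely for $p>\frac{n}{n+\alpha}$. So your threshold is right, but the near-boundary step has to be this $L^2$-on-annuli-plus-H\"older argument.
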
 
\begin{proof}
    Let $f=a(x)$ be an $H^p_\mathcal{L}$-atom supported on $Q_r(x_0)$, it suffices to prove  
$ \lnorm\tilN(\nabla_V \calS a)\rnorm _{p}\le C$ uniformly in $r$ and $x_0$. By (\ref{maxi}), we obtain that $$\brackets{\int_{Q_{cr}(x_0)}\labs\tilN(\nabla_V \calS a)(\xi)\rabs^p d\xi}^\frac{1}{p}\le Cr^{\frac{n}{p}-\frac{n}{2}}\brackets{\int_{Q_{cr}(x_0)}\labs\tilN(\nabla_V \calS a)(\xi)\rabs^2 d\xi }^\frac{1}{2}\le Cr^{\frac{n}{p}-\frac{n}{2}}\|a\|_2\le C .$$ 
    For the integral of $\tilN (\nabla_V \calS a)$ on dyadic annuli $\Theta_{2^kr} (x_0)$ , we split it into two terms, one ``close to the boundary'' and one ``far from the boundary''. Let
    \begin{equation}\label{definitiontruncatedmaximal}
        \begin{array}{cc}\displaystyle
            \tilN_1(w)(x)=\sup_{|x^\prime-x|<\gamma t^\prime, t^\prime<|x-x_0|}\fint_{Q_{\theta t^\prime}(x^\prime,t^\prime)} |w|^2 d\xi d\tau, \\\displaystyle \tilN_2(w)(x)=\sup_{|x^\prime-x|<\gamma t^\prime,t^\prime\ge|x-x_0|}\fint_{Q_{\theta t^\prime}(x^\prime,t^\prime)} |w|^2 d\xi d\tau ,
        \end{array}
    \end{equation}
    then $\tilN(u)\le \tilN_1(u)+\tilN_2(u)$. We first consider the term $\tilN_2(\nabla_V \calS a)$ which is more easy to handle, and analyze the behavior of  $\calS a$ for two cases with respect to the radius $r$.
    \begin{itemize}
        \item [Case 1. ]$ r\le \rho(x_0), \int_{\rn} a d\xi=0.$ This implies \begin{align*}
        |\calS a(x,t)|=\absvalue{\int_{Q_r(x_0)} \brackets{\Gamma(x,t|\xi,0)-\Gamma(x,t|x_0,0)} a(\xi)d\xi}\le \frac{C r^{\alpha+n-\frac{n}{p}}}{(|x-x_0|+|t|)^{n-1+\alpha}},
    \end{align*}
    provided that $|x-x_0|>Cr$ with $C>2$. Hence for $t^\prime\ge |x-x_0|>Cr$ and $|x^\prime-x|<\gamma t^\prime $, by Caccioppoli's inequality we have
\begin{align*}
    \brackets{\fint_{Q_{\theta t^\prime}(x^\prime,t^\prime)}\absvalue{\nabla_V \calS a}^2d\xi d\tau}^\frac{1}{2}&\le \frac{C}{t^\prime}\brackets{\fint_{Q_{\theta t^\prime}(x^\prime,t^\prime)}\absvalue{\calS a}^2d\xi d\tau}^\frac{1}{2}\\
    &\le \frac{Cr^{\alpha+n-\frac{n}{p}}}{|x-x_0|^{n+\alpha}},
\end{align*}
     which implies
        \begin{align*}
            \tilN_2(\nabla_V \calS a)(x)\le \frac{Cr^{\alpha+n-\frac{n}{p}}}{|x-x_0|^{n+\alpha}}.
        \end{align*}
        \item [Case 2.]$ r> \rho(x_0).$ We have
        \begin{align*}
            |\calS a(x,t)|=&\absvalue{\int_{Q_r(x_0)}\Gamma(x,t|\xi,0)a(\xi)d\xi}\\\le&\frac{C_N r^{n-\frac{n}{p}}}{(1+(|x-x_0|+t)m_V(x_0))^N(|x-x_0|+|t|)^{n-1}} 
        \end{align*}
        Similar to the previous case, we get
        \begin{align*}
            \tilN_2(\nabla_V \calS a)(x)\le \frac{Cr^{n-\frac{n}{p}}}{|x-x_0|^{n}(1+|x-x_0|m_V(x_0))^N}.
        \end{align*}
    \end{itemize}

    For the term $\tilN_1 (\nabla_V \calS a)$, we consider the average of $\nabla_V \mathcal{S}a$ on Whitney cubes  in two cases with regard to its side length $ \rho=\theta t^\prime$, where $|x^\prime-x|<\gamma t^\prime\le \gamma |x-x_0|$.
    \begin{itemize}
        \item [Case 1.]$\fint_{Q_{c\rho}(x^\prime)} V^\frac{1}{2} d\xi>\frac{1}{c\rho} $, we use Caccioppoli's inequality regarding $\calS a$ and split the right hand side directly as
        \begin{align*}
        &\brackets{\fint_{Q_{\theta t^\prime}(x^\prime,t^\prime)}\absvalue{\nabla_V \calS a}^2d\xi d\tau}^\frac{1}{2}\\
        \le& \frac{C}{\rho}\brackets{\fint_{Q_{c\rho}(x^\prime,t^\prime)}|\calS a -C_Q|^2d\xi d\tau}^\frac{1}{2}+\frac{C|C_Q|}{\rho}\\\le &\frac{C}{\rho}\brackets{\fint_{Q_{c\rho}(x^\prime,t^\prime)}|\calS a(\xi,\tau) -\calS a(\xi,0)|^2d\xi d\tau}^\frac{1}{2}\\&+\frac{C}{\rho}\brackets{\fint_{Q_{c\rho}(x^\prime)}|\calS a(\xi,0) -C_Q|^2d\xi }^\frac{1}{2}+\frac{C|C_Q|}{\rho}\\
        \defeq& C\brackets{I_1+I_2+I_3}.
    \end{align*}

    For $I_1$, we have 
    \begin{align*}
        |\mathcal{S} a(\xi,\tau)-\calS a(\xi,0)|\le &\int_{0}^\tau\absvalue{\int_{Q_r(x_0)}\partial_\sigma\Gamma(\xi,\sigma|\eta,0)a(\eta ) d\eta }d\sigma
    \end{align*}
    which can be dealt with when $r<\rho(x_0)$ by
   \begin{align*}
        |\mathcal{S} a(\xi,\tau)-\calS a(\xi,0)|\le &\int_{0}^{c\rho}\int_{Q_r(x_0)}\absvalue{\partial_\sigma\brackets{\Gamma(\xi,\sigma|\eta,0)-\Gamma(\xi,\sigma|x_0,0)}}|a(\eta )| d\eta d\sigma\\\le & \frac{C\rho r^{\alpha+n-\frac{n}{p}}}{|\xi-x_0|^{n+\alpha}},
    \end{align*}

    when $r\ge \rho(x_0)$,
    \begin{align*}
        |\mathcal{S} a(\xi,\tau)-\calS a(\xi,0)|\le &\int_{0}^{c\rho}\int_{Q_r(x_0)}\absvalue{\partial_\sigma\Gamma(\xi,\sigma|\eta,0)}|a(\eta )| d\eta d\sigma\\\le & \frac{C\rho r^{n-\frac{n}{p}}}{(1+|\xi -x_0|m_V(x_0))^N|\xi-x_0|^{n}},
    \end{align*}

    thus we have
    \begin{align*}
        \int_{\Theta_{2^kr}(x_0)}I_1^2 dx\le \left\{\begin{array}{cc}
             C2^{-k(n+2\alpha)}r^{n-\frac{2n}{p}}, &r\le \rho(x_0),\\
            \frac{C2^{-kn}r^{n-\frac{2n}{p}}}{(1+2^k)^N}, &r>\rho(x_0).
        \end{array}
            \right.
    \end{align*}

    For $I_2$, taking $C_Q=\Avg{Q_{c\rho}(x^\prime)}\calS a(\cdot,0)$, by Poincar\'e-Sobolev inequality we have
    \begin{align*}
        I_2\le \brackets{\fint_{Q_{c\rho}\left(x^\prime\right)}\absvalue{\nbltg\calS a}^\frac{2n}{n+2}dx}^\frac{n+2}{2n}\le \left(\mathcal{M}\brackets{|\nbltg \calS a|^\frac{2n}{n+2}}(x)\right)^\frac{n+2}{2n},
    \end{align*}
    thus by \rref{gradientdifferenceholder} and maximal function inequalities we have \begin{align*}
        \int_{\Theta_{2^kr}(x_0)}I_2^2 dx\le& C \int_{\Theta_{2^k r}(x_0)}|\nbltg \calS a|^2 dx\le \frac{C}{2^{3k} r^3}\int_{\tilde{\Theta}_{2^kr}(x_0,0)}|\calS a|^2dxdt
        \\\le &\left\{\begin{array}{cc}
             C2^{-k(n+2\alpha)}r^{n-\frac{2n}{p}}, &r\le \rho(x_0),\\
            \frac{C2^{-kn}r^{n-\frac{2n}{p}}}{(1+2^k)^N}, &r>\rho(x_0).
        \end{array}
            \right.
    \end{align*}

    For $I_3$, using local Fefferman-Phong inequality \rref{localfeffermanphong} we have
    \begin{align*}
        I_3=\absvalue{\fint_{Q_{c\rho}(x^\prime)}\calS a(x,0)dx}\le C\fint_{Q_{c\rho}(x^\prime)}|\nabla_V \calS a| dx\le \mathcal{M}(\nabla_V \calS a),
    \end{align*}
    hence similarly to $I_2$ we have
    \begin{align*}
        \int_{\Theta_{2^k r}}I_3^2dx\le& C \int_{\Theta_{2^k r}(x_0)}|\nabla_V \calS a|^2 dx\le \frac{C}{2^{3k}r^3} \int_{\tilde{\Theta}_{2^kr}(x_0,0)}|\calS a|^2dxdt
        \\\le &\left\{\begin{array}{cc}
             C2^{-k(n+2\alpha)}r^{n-\frac{2n}{p}}, &r\le \rho(x_0),\\
            \frac{C2^{-kn}r^{n-\frac{2n}{p}}}{(1+2^k)^N}, &r>\rho(x_0).
        \end{array}
            \right.
    \end{align*}
    
        \item [Case 2.]$\fint_{Q_{c\rho}(x^\prime)} V^\frac{1}{2} d\xi \le \frac{1}{c\rho}$, we need to use Caccioppoli's inequality with regard to $\calS a-C_Q$, satisfying $\mathcal{L}(\calS a-C_Q)=-\DIV_V(0,V^\frac{1}{2}C_Q)^\ast$.
        \begin{align*}
        &\brackets{\fint_{Q_{\rho}(x^\prime,t^\prime)}\absvalue{\nabla_V \calS a}^2d\xi d\tau}^\frac{1}{2}\\\le &\frac{C}{\rho }\brackets{\fint_{Q_{c\rho}(x^\prime,t^\prime)}|\calS a -C_Q|^2d\xi d\tau}^\frac{1}{2}+{C}\brackets{\fint_{Q_{c\rho}(x^\prime,t^\prime)}V(\xi)C_Q^2 d\xi }^\frac{1}{2}\\
        \le & \frac{C}{\rho }\brackets{\fint_{Q_{c\rho}(x^\prime,t^\prime)}|\calS a -\calS a(\xi,0)|^2d\xi d\tau}^\frac{1}{2}+\frac{C}{\rho }\brackets{\fint_{Q_{c\rho}(x^\prime)}|\calS a(\xi,0) -C_Q|^2d\xi }^\frac{1}{2}\\&+C\brackets{\fint_{Q_{c\rho}(x^\prime)}V(\xi)C_Q^2 d\xi }^\frac{1}{2}\\
        \defeq &J_1+J_2+J_3.
    \end{align*}

    For $J_1$ and $J_2$ the result is the same as Case 1, where $C_Q=\Avg{Q_{c\rho}(x^\prime)} \calS a$ is chosen the same.  For $J_3$, by the property $V^\frac{1}{2}\in \mathcal{B}_{2q}$ we know 

    \begin{align}
        I_3\le \frac{C|C_Q|}{r}\fint_{Q_{c\rho}(x^\prime)}V(\xi)^\frac{1}{2}d\xi\le C\fint_{Q_{c\rho}(x^\prime)}|\nabla_V \calS a|dx\le C\mathcal{M}(\nabla_V\calS a).
    \end{align}
    \end{itemize}

\end{proof}

\begin{theorem}\label{H^p estimate on slices}With the assumptions above we have
    \begin{equation*}
        \sup _{t>0}\brackets{\lnorm\nbltg \calS f(\cdot,t) \rnorm_{H^p}+\lnorm \partial_\nu \calS f(\cdot,t)\rnorm_{H^p_\mathcal{L}}} \le C\|f\|_{H^p_\mathcal{L}}.
    \end{equation*}
\end{theorem}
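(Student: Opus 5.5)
By atomic decomposition and the quasi-triangle inequality for $\|\cdot\|_{H^p}^p$, it suffices to take $f=a$ an $H^p_\mathcal{L}$-atom supported in $Q_r(x_0)$. We will show that, for each fixed $t>0$, $\nbltg\calS a(\cdot,t)$ is a constant multiple of an $H^p$ molecule. Likewise $\partial_\nu\calS a(\cdot,t)=-\mathbf{e}_{n+1}\cdot A\nabla\calS a(\cdot,t)$ will be a constant multiple of an approximate molecule of $H^p_\mathcal{L}$ associated with a cube comparable to $Q_{\max(r,t)}(x_0)$. All constants must be uniform in $t$, $r$ and $x_0$. The conclusion then follows from the preceding lemma, which states that approximate molecules have uniformly bounded $H^p_\mathcal{L}$ norm; for $\nbltg\calS a$ we use the classical molecular theory of $H^p$.

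\textbf{Size conditions.} On the base cube $Q_{cr}(x_0)$, the $L^2$ bound $\|\nabla_V\calS a(\cdot,t)\|_2\le C\|a\|_2\le Cr^{\frac n2-\frac np}$ comes from the slice estimate \rref{L^2 boundedness of slices}. On the annuli $\Theta_{2^kr}(x_0)$ we argue as in Theorem \ref{maximalhardy}. We apply Lemma \ref{basiclemma} on slices, with Whitney-type boxes of size comparable to $\max(2^kr,t)$. We then use the pointwise bounds for $\calS a$: in the cancellative case $r\le\rho(x_0)$,
\[|\calS a(x,t)|\le Cr^{\alpha+n-\frac np}(|x-x_0|+t)^{1-n-\alpha},\]
and in the case $r>\rho(x_0)$ we use the $(1+|x-x_0|m_V)^{-N}$ decay. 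This gives
\[\|\nabla_V\calS a(\cdot,t)\chi_{\Theta_{2^kr}}\|_2\le C2^{-k\delta}(2^kr)^{\frac n2-\frac np},\]
with $\delta=\alpha-n(\frac1p-1)$ in the first case. This is positive exactly when $p>\frac{n}{n+\alpha}$, which is what fixes $\varepsilon'$. In the second case the decay is arbitrary. When $t\gg r$ we instead center the molecule at scale $t$, and the same kernel bounds give the required tail estimates.

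\textbf{Cancellation conditions.} First, $\int\nbltg\calS a(x,t)\,dx=0$ trivially, since it is a tangential gradient of a function decaying like $|x|^{1-n-\alpha}$ or faster. This yields the vanishing moment needed for $H^p$ molecules. For $\partial_\nu\calS a(\cdot,t)$ we use the variational formulation of the conormal derivative on $\{\tau>t\}$. Testing the equation with a cutoff $\Phi_R$ that equals $1$ on large balls, $t$-independence gives
\[\int_{\rn}\partial_\nu\calS a(x,t)\,dx=-\int_t^\infty\!\!\int_{\rn}V\,\calS a\,dx\,d\tau,\]
with the boundary terms at infinity vanishing by decay. Writing $\calS a=\int\Gamma(\cdot|\xi,0)a(\xi)\,d\xi$ and swapping integrals, we estimate this using \rref{integrability of V} with $s=1$. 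In the case $r\le\rho(x_0)$ we also subtract $\Gamma(\cdot|x_0,0)$, which gains a factor $(r/|x-x_0|)^\alpha$. The target is $|\int\partial_\nu\calS a(\cdot,t)|\le C\rho(x_0)^{n-\frac np}$ for $p<1$, and the logarithmic bound for $p=1$.

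\textbf{Main obstacle.} We expect the cancellation estimate to be the hardest step. The integral $\int V\Gamma$ must be shown to be of order $m_V(x_0)^{\frac np-n}$ uniformly in $t$. Our first attempt is to split the $x$-integral at $|x-x_0|\sim\rho(x_0)$. On the inner region we use \rref{zeta} to get a factor $(r m_V)^{\delta_0}$; on the outer region we use the decay $(1+|x-x_0|m_V)^{-N}$. If needed, we fall back to the $\Gamma-\Gamma_0$ comparison of Lemma \ref{differencefundamentalsolution} to exploit the exact cancellation of $\Gamma_0$ for $V=0$. A secondary issue is that $\partial_\nu\calS a(\cdot,t)$ at interior slices is defined only a.e.; we handle this by the slice $L^2$ bound together with the $\tilN$ control from Theorem \ref{maximalhardy}.
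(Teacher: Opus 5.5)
Your proposal follows essentially the same route as the paper. Conditions (i)--(iii) come from the slice bound \rref{L^2 boundedness of slices} and the Caccioppoli/kernel decay estimates behind Theorem \ref{maximalhardy}, and $\int_{\rn}\nbltg\calS a=0$ is immediate. Condition (iv) comes from Green's formula $\bigl|\int_{\rn}\partial_\nu\calS a(\cdot,t)\,dx\bigr|=\bigl|\int_t^\infty\!\int_{\rn}V\calS a\,dx\,d\tau\bigr|$. The paper bounds the part near $Q_{4r}(x_0)$ by size and \rref{zeta}. It bounds the rest by subtracting $\Gamma(\cdot|x_0,0)$ (when $r\le\rho(x_0)$) and applying \rref{integrability of V}.

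There are two small corrections.

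First, drop the recentering at scale $\max(r,t)$ for $\partial_\nu\calS a(\cdot,t)$. For $t>\gamma_0\rho(x_0)$ that cube violates condition (i) of an approximate molecule. Recentering is also unnecessary. For annuli with $2^kr\lesssim t$, apply Lemma \ref{basiclemma} at scale $t$ together with $|\calS a|\lesssim r^{\alpha+n-\frac np}t^{1-n-\alpha}$ (or the $(1+tm_V(x_0))^{-N}$ bound when $r>\rho(x_0)$). This already gives $\|\nabla_V\calS a(\cdot,t)\chi_{\Theta_{2^kr}}\|_2\le C2^{-k\delta}(2^kr)^{\frac n2-\frac np}$ relative to $Q_r(x_0)$, uniformly in $t$. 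So the paper's fixed cube $Q_r(x_0)$ works for every $t$.

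Second, in your split at $|x-x_0|\sim\rho(x_0)$, \rref{zeta} alone yields only $O(r^{n-\frac np})$ on $r\lesssim|x-x_0|\lesssim\rho(x_0)$, and the same holds beyond $\rho(x_0)$ without cancellation. The needed smallness there, a factor $(rm_V(x_0))^{\alpha}$ as in the paper's $I_k$, must come from subtracting $\Gamma(\cdot|x_0,0)$. The factor $(rm_V(x_0))^{\delta_0}$ from \rref{zeta} arises only in the region $|x-x_0|\lesssim r$. The fallback to $\Gamma-\Gamma_0$ is not needed.
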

\begin{proof}
    It can be observed from the proof  of Theorem \ref{maximalhardy} that $\nabla_V \calS a$ satisfies the first three conditions of an approximate molecule, if $a$ is an $H^p_\mathcal{L}$ atom and $\frac{n}{n+\alpha}<p\le 1$. It is easy to see $\int_{\rn}\nbltg \calS f dx=0$, therefore it suffices to prove 
    \begin{equation*}
        \labs\int_{\rn} \partial_\nu \calS adx\rabs\le C(rm_V(x_0))^\delta r^{n-\frac{n}{p}} .
    \end{equation*}

    Denote $ H_t=\rn \times (t,\infty)$, $D_r(x)=Q_r(x,0) \cap H_t, Z_r(x)=Q_{2r}(x,0)\setminus Q_r(x,0)\cap H_t$. By divergence formula we know 
    \begin{align*}
        \labs\int_{\rn} \partial_\nu \calS a(\xi,t)d\xi\rabs=&\absvalue{\int_{H_t} V(\xi)\calS a(\xi,\tau)d\xi d\tau}\\
        =&\absvalue{\int_{H_t} \int_{Q_{r}(x_0)} V(\xi)\Gamma (\xi,\tau|\eta,0)a(\eta)d\eta d\xi d\tau}\\
        \le& \absvalue{\int_{D_{4r}(x_0)}\int_{Q_{r}(x_0)}V(\xi)\Gamma (\xi,\tau|\eta,0)a(\eta)d\eta d\xi d\tau}\\&+\sum_{k=2}^\infty\absvalue{\int_{Z_{2^k r}(x_0)}\int_{Q_{r}(x_0)}V(\xi)\Gamma (\xi,\tau|\eta,0)a(\eta)d\eta d\xi d\tau}\\
        \defeq &I_1+\sum_{k=2}^\infty I_k.
    \end{align*}
     For $I_1$, we have 
     \begin{align*}
         I_1\le & C\int_{D_{4r}(x_0)}\int_{Q_r(x_0)}\frac{V(\xi)|a(\eta)|}{(|\xi-\eta|+\tau)^{n-1}}d\eta d\xi d\tau\\
         \le & C\int_{Q_{4r}(x_0)}\int_{Q_r(x_0)}\frac{V(\xi)|a(\eta)|}{|\xi-\eta|^{n-2}} d\eta d\xi\\
         \le & C\left(\frac r{\rho(x_0)}\right)^{\delta_0}  r^{n-\frac{n}{p}}=C\left(\frac r{\rho(x_0)}\right)^{\delta_0+n(1-\frac 1p)} \rho(x_0)^{n-\frac{n}{p}}.
     \end{align*}
     For $I_k (k\ge 2)$ we consider two cases.
     \begin{itemize}
         \item [Case 1.]$ r\le \rho(x_0), \int_{\rn}ad\xi=0$.
    \begin{align*}
        I_k\le& \int_{Z_{2^k r}(x_0)}\int_{Q_{r}(x_0)}V(\xi)|\Gamma (\xi,\tau|\eta,0)-\Gamma(\xi,\tau|x_0,0)||a(\eta)|d\eta d\xi d\tau\\
        \le &Cr^{n-\frac{n}{p}}\int_{\Theta_{2^k r}(x_0)}\frac{V(\xi)r^\alpha d\xi d\tau}{(1+(|\xi-x_0|+\tau)m_V(x_0))^N(|\xi-x_0|+\tau)^{n-1+\alpha}}\\
        \le &\frac{C(rm_V(x_0))^\alpha}{2^{k\alpha}}r^{n-\frac{n}{p}},
    \end{align*}
         Thus we have 
         $$\displaystyle\sum_{k=2}^\infty I_k\le C \left(\frac r{\rho(x_0)}\right)^{\alpha+n(1-\frac 1p)} \rho(x_0)^{n-\frac{n}{p}}.$$
         \item [Case 2.]$\rho(x_0)< r\le \gamma_0\rho(x_0)$. 
     \begin{align*}
         I_k\le  C\int_{Z_{2^k r(x_0)}}\int_{Q_r(x_0)}\frac{V(\xi)|a(\eta)|}{(1+2^k rm_V(x_0))^N 2^{kn-k}r^{n-1}}d\eta d\xi d\tau\le \frac{Cr^{n-\frac{n}{p}}}{(1+2^k rm_V(x_0))^{N^\prime}}.
     \end{align*}
       Hence \begin{align*}
           \sum_{k=2}^\infty I_k\le \sum_{k=2}^\infty \frac{Cr^{n-\frac{n}{p}}}{(1+2^k)^N}\le Cr^{n-\frac{n}{p}}\le C\rho(x_0)^{n-\frac{n}{p}}.
       \end{align*}  
     \end{itemize}
     
    To conclude both cases, take $\delta =\min\{\alpha,\delta_0 \}$, for $\frac{n}{n+\delta}<p\le 1$, from $r\le \gamma_0\rho(x_0)$ it can be seen that $\partial_\nu \mathcal{S}a$ is an $H^p_\mathcal{L}$ approximate molecule.
     
\end{proof}

\subsection{Single Layer Potential when $2\le p < 2+\varepsilon$}\label{layer potential when 2+epsilon}

\begin{theorem}\label{2plusepsilon}There exists $\varepsilon>0$ such that
\begin{equation*}
    \sup_{t>0}\|\nabla_V\calS f(\cdot ,t)\|_p\le C\|f\|_p
\end{equation*}   
for $2\le p<2+\varepsilon$.
\end{theorem}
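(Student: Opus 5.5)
We would prove the estimate by duality combined with a good-$\lambda$/extrapolation argument of the Shen type: self-improve the $L^2$ bound \rref{L^2 boundedness of slices} using the reverse H\"older gain from Lemma \ref{basiclemma}.

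Fix $t>0$ and set $T_t f=\nabla_V\calS f(\cdot,t)$. By \rref{L^2 boundedness of slices}, $T_t$ is bounded on $L^2(\rn)$ uniformly in $t$. We apply the $L^p$ extrapolation criterion of Shen (the real-variable lemma used in \cite{NeumannSchShen}, a variant of the Caffarelli--Peral good-$\lambda$ argument). The criterion asks for the following. For every cube $Q=Q_r(x_0)\subset\rn$ and every $f$ supported outside $4Q$, we need
\begin{equation*}
\brackets{\fint_{Q}|T_tf|^{2+\tilde\varepsilon}}^{\frac{1}{2+\tilde\varepsilon}}\le C\brackets{\fint_{2Q}|T_tf|^{2}}^{\frac12}+C\sup_{Q'\supset Q}\brackets{\fint_{Q'}|f|^2}^{\frac12}.
\end{equation*}
With this in hand, $T_t$ is bounded on $L^p$ for $2\le p<2+\tilde\varepsilon$, with constants independent of $t$. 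For general $f$ we split $f=f\chi_{4Q}+f\chi_{(4Q)^\complement}$. The local part is handled directly by the $L^2$ bound, which is what produces the supremum term.

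To verify the local reverse H\"older inequality, take $f$ vanishing on $4Q$ and put $u=\calS f$. Then $\mathcal{L}u=0$ in the slab $Q_{4r}(x_0)\times\reals$ minus the support of $f$ on $\{\tau=0\}$; since $f=0$ on $4Q$, $u$ solves $\mathcal{L}u=0$ across $\tau=0$ near $4Q$.

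\begin{itemize}
\item[Case $r\le t$.] The cylinder $Q_{2r}(x_0)\times(t-r,t+r)$ lies in the region of solution. Lemma \ref{basiclemma} then gives $L^{2+\tilde\varepsilon}$ control of $\nabla_Vu(\cdot,t)$ on $Q$ by the solid $L^2$ average of $\nabla_Vu$ over $Q_{2r}\times(t-2r,t+2r)$.
\item[Case $r>t$.] We use the cylinder $Q_{2r}(x_0)\times(t-r,t+r)$, which crosses $\tau=0$. This is allowed precisely because $f=0$ on $4Q$, so $u$ is still a solution there.
\end{itemize}

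It remains to bound the solid average $\fint_{t-2r}^{t+2r}\fint_{Q_{2r}}|\nabla_V u|^2$ by slice quantities. We average the slices $\|\nabla_V\calS f(\cdot,\tau)\|_{L^2(2Q)}$ over $\tau$ and compare each slice with the slice at height $t$. By the fundamental theorem of calculus in $\tau$ (as in the proof of Lemma \ref{differencefundamentalsolution}), it suffices to control $|\tau-t|\,\fint|\nabla_V\partial_\tau u|^2$. Since $V$ and $A$ are independent of $t$, $\partial_\tau u$ is again a solution, so Caccioppoli's inequality bounds this by $r^{-2}\fint|\partial_\tau u|^2$ on a slightly larger region.

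The far-field piece $\partial_\tau u=\partial_\tau\calS f$ has a Calder\'on--Zygmund kernel with decay $|x-\xi|^{-n}$ from the pointwise bounds on $\partial_t^m\Gamma$. Because $f$ is supported away from $4Q$, this gives $\fint|\partial_\tau u|^2\lesssim\sup_{Q'\supset Q}\fint_{Q'}|f|^2$ pointwise. The slice at height $t$ itself contributes the $\fint_{2Q}|T_tf|^2$ term, after enlarging the cube by a fixed factor, which is harmless in Shen's lemma.

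The main obstacle is this comparison between the solid average of $\nabla_Vu$ and a single slice. The term $V^{1/2}u$ does not enjoy the derivative structure that the $\partial_\tau$ argument exploits. We would handle it by the same $\partial_\tau$ argument applied to the full vector $\nabla_Vu$, noting that $V^{1/2}\partial_\tau u$ is controlled by Caccioppoli for the solution $\partial_\tau u$. If this turns out to be too lossy, a fallback is to bound the solid average directly by the slice $L^2$ bound \rref{L^2 boundedness of slices} applied to $f\chi_{Q'}$ on dyadic annuli. The pointwise decay of $\Gamma$ with factor $(1+|x-\xi|m_V)^{-N}$ sums these into the maximal term.
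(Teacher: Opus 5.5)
Your framework is fine up to the comparison step. Since $f=0$ on $4Q$, $u=\calS f$ solves $\mathcal{L}u=0$ in $4Q\times\reals$, so Lemma \ref{basiclemma} applies at every scale and Shen's criterion is a legitimate target. The gap is in bounding the solid average by the slice at height $t$.

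You claim that the size bound $|\partial_\tau\Gamma(x,\tau|\xi,0)|\lesssim(|x-\xi|+|\tau|)^{-n}(1+(|x-\xi|+|\tau|)m_V(x))^{-N}$ gives $|\partial_\tau\calS f|\lesssim\sup_{Q'\supset Q}(\fint_{Q'}|f|^2)^{1/2}$ on the enlarged slab. It does not. A kernel of size $|x-\xi|^{-n}$ is not integrable at infinity, so the dyadic sum you actually get is $\sum_{k\ge 2}(1+2^k r m_V(x_0))^{-N}\fint_{2^kQ}|f|$. This is of order $\log(2+\rho(x_0)/\max\{r,t\})\sup_{Q'}\fint_{Q'}|f|$, which is unbounded when $r,t\ll\rho(x_0)$; only when $\max\{r,t\}\gtrsim\rho(x_0)$ does the decay factor make your bound true.

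Nor is there cancellation to save it. Unless $A$ has block form, $\partial_t\Gamma(\cdot,0|\xi,0)$ is a genuine Calder\'on--Zygmund kernel. For example, take constant real symmetric $A$ with some $A_{j,n+1}\ne 0$, $j\le n$, and $V$ a small constant: the kernel is odd and of exact order $|x-\xi|^{-n}$ for $|x-\xi|\ll\rho$. Taking $|f|\le 1$ with the sign of the kernel on $4r<|\xi-x_0|<\rho(x_0)$ then gives $|\partial_\tau\calS f(x_0,0)|\gtrsim\log(\rho(x_0)/r)$.

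Your fallback is worse. Caccioppoli on the pieces $f_k=f\chi_{2^{k+1}Q\setminus 2^kQ}$ gives $r^{-1}|\calS f_k|\lesssim 2^k(1+2^krm_V(x_0))^{-N}\fint_{2^{k+1}Q}|f|$, which sums to about $\rho(x_0)/r$. So the real obstacle is the singular-integral nature of $\partial_t\calS$, not the term $V^{1/2}u$ that you flagged.

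This is exactly why the paper's Lemma \ref{reverseholderlemma} keeps $\calN_\pm(\partial_t u)$ on the right-hand side. There the vertical oscillation $|u_2(x,\tau)-u_2(x,t_0)|$ is bounded by $r(\calN_\pm(\partial_t\calS f)+\calN_\pm(\partial_t\calS f_1))(x)$. This function is not dominated by averages of $f$, but it is bounded on $L^p$ because $\partial_t\calS$ is a Calder\'on--Zygmund operator. The self-improvement (a $2-\delta$ exponent on the slice via Poincar\'e--Sobolev and \rref{localfeffermanphong}, at the single scale $r=\theta_0t_0$) is then run as in \cite{MethodOfLayerPotential} with this extra $L^p$ function.

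To rescue your route you need one of two additional ideas.
\begin{enumerate}
\item Use a variant of Shen's lemma that admits an extra term $\sup_{Q'\supset Q}(\fint_{Q'}\calN_\pm(\partial_t\calS f)^2)^{1/2}$, together with the $L^p$ bound for $\calN_\pm(\partial_t\calS)$.
\item When $r,t<\rho(x_0)$, apply Caccioppoli to $w-c$, where $w=\partial_\tau\calS f$, $c=w(x_0,t)$ and $\mathcal{L}(w-c)=-cV$. This gives $r^2\fint|\nabla_V w|^2\lesssim\fint|w-c|^2+|c|^2r^2\fint_{Q_{3r}(x_0)}V$. The term $|w-c|$ is summable by the H\"older estimate for $\partial_t\Gamma$, after requiring $f$ to vanish on a larger dilate of $Q$. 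The logarithmic size of $|c|$ is absorbed by $r^2\fint_{Q_{3r}(x_0)}V\lesssim(rm_V(x_0))^{\delta_0}$ from \rref{zeta}.
\end{enumerate}
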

\begin{proof}
    Let $f\in L^2(\rn)$. For a fixed $ t_0>0$,  let $r=\theta_0 t_0,\, Q_{r}(x_0)$ be a cube in $\rn$, and split $f$ into $f=f\chi_{Q_{4r}(x_0)}+f \chi_{Q_{4r}(x_0)^\complement}\defeq f_1+f_2$, and define $u_j(x,t)=\calS f_j(x,t)$ for $j=1,2$.

For $f_1$ it is obvious that 
 \begin{align*}
     \fint_{Q_r(x_0)}|\nabla _{x,V} u_1(x,t_0)|^2 dx\le \frac{C}{r^n}\sup_{t>0}\|\nabla_{x,V}\calS f_1\|_2^2\le C\fint_{Q_{4r}(x_0)}|f|^2dx.
 \end{align*}
 
 Hence it suffices to prove the following lemma.
    \begin{lemma}\label{reverseholderlemma}There exists a $\delta>0$ such that
        \begin{align*}
            \brackets{
            \fint_{Q_r(x_0)}|\nabla _{x,V} u_2(x,t_0)|^2 dx}^\frac{1}{2}\le &C\brackets{\fint_{Q_{4r}(x_0)}|\nabla _{x,V} u(x,t_0)|^{2-\delta}dx}^\frac{1}{2-\delta}\\&+C\brackets{\fint_{Q_{4r}(x_0)}\brackets{|f|^2+\calN_\pm(\partial_t u)^2}dx}^\frac{1}{2},
        \end{align*}
        where $\calN_\pm$ refers to the double-sided nontangential maximal function.
    \end{lemma}

    \begin{proof}[Proof of Lemma \ref{reverseholderlemma}]
      As in the proof of Theorem \ref{maximalgradientlayerpotential} and \ref{maximalhardy}, we consider two cases with regard to the radius $r$.
      \begin{itemize}
          \item [Case 1.]$\fint_{Q_{2r}} V^\frac{1}{2}dx>  \frac{1}{2r} $. By Lemma \ref{basiclemma} we have
           \begin{align}
            \brackets{
            \fint_{Q_r(x_0)}|\nabla _{x,V} u_2(x,t_0)|^2 dx}^\frac{1}{2}\le&\frac{C}{r}\brackets{
            \fint_{t_0-cr} ^{t_0+cr} \fint_{Q_{2r}(x_0)}| u_2(x,t)|^2 dxdt}^\frac{1}{2}\nonumber\\
            \le &\frac{C}{r}\brackets{
            \fint_{t_0-cr} ^{t_0+cr} \fint_{Q_{2r}(x_0)}| u_2(x,t)-u_2(x,t_0)|^2 dxdt}^\frac{1}{2}\nonumber\\&+\frac{C}{r}\brackets{ \fint_{Q_{2r}(x_0)}| u_2(x,t_0)-C_Q|^2 dxdt}^\frac{1}{2}+\frac{C|C_Q|}{r}\label{derivationof2plusepsilon}\\\le &
            C(I_1+I_2+I_3).\nonumber
        \end{align}

        For $I_2$, setting $C_Q=\Avg{Q_{2r}}u_2(x,t_0)$, by Poincar\'e inequality we have 
        \begin{align*}
            I_2\le &\brackets{\fint_{Q_{2r}(x_0)}|\nabla _{\tan} u_2(x,t_0)|^{\frac{2n}{n+2}}dx}^\frac{n+2}{2n}\\
            \le & C\brackets{\fint_{Q_{2r}(x_0)}|\nabla _{\tan} u(x,t_0)|^{\frac{2n}{n+2}}dx}^\frac{n+2}{2n}+C\brackets{\fint_{Q_{2r}(x_0)}|\nabla _{\tan} u_1(x,t_0)|^{\frac{2n}{n+2}}dx}^\frac{n+2}{2n}\\
            \le &C\brackets{\fint_{Q_{4r}(x_0)}|\nabla _{\tan} u(x,t_0)|^{\frac{2n}{n+2}}dx}^\frac{n+2}{2n}+ C\fint_{Q_{4r}(x_0)}|f|^2dx.
        \end{align*}

  And by \rref{localfeffermanphong} we have 
\begin{align*}
    I_3\le& C\fint_{Q_{2r}(x_0)}|\nabla_{x,V}u_2|dx\le C\brackets{\fint_{Q_{2r}(x_0)}|\nabla _{x,V} u_2(x,t_0)|^{\frac{2n}{n+2}}dx}^\frac{n+2}{2n}\\
            \le & C\brackets{\fint_{Q_{2r}(x_0)}|\nabla _{x,V} u(x,t_0)|^{\frac{2n}{n+2}}dx}^\frac{n+2}{2n}+C\brackets{\fint_{Q_{2r}(x_0)}|\nabla _{x,V} u_1(x,t_0)|^{\frac{2n}{n+2}}dx}^\frac{n+2}{2n}\\
            \le &C\brackets{\fint_{Q_{4r}(x_0)}|\nabla _{x,V} u(x,t_0)|^{\frac{2n}{n+2}}dx}^\frac{n+2}{2n}+ C\fint_{Q_{4r}(x_0)}|f|^2dx.
\end{align*}
We can get a similar estimate to $I_2$.
Finally by the layer potential expression we have 
\begin{align*}
    |u_2(x,t)-u_2(x,t_0)|=&|\calS f_2(x,t)-\calS f_2(x,t_0)|\\
    \le &\absvalue{\int_{t_0}^{t}\absvalue{\int_{Q_{4r}(x_0)^\complement}\partial_\tau\Gamma(x,\tau|\xi,0)f(\xi)d\xi} d\tau}\\
    \le &\absvalue{\int_{t_0}^t \brackets{\calN_{\pm} (\partial_t\calS f)(x)+\calN_{\pm}(\partial_t \calS f_1)(x)} d\tau}\\\le &r\brackets{\calN_{\pm} (\partial_t\calS f)(x)+\calN_{\pm}(\partial_t \calS f_1)(x)},
\end{align*}
therefore \begin{align*}
    I_2\le& C\brackets{\fint_{Q_{2r}(x_0)}\brackets{\calN_{\pm} (\partial_t\calS f)(x)+\calN_{\pm}(\partial_t \calS f_1)(x)}^2dx}^\frac{1}{2}\\
    \le &C\brackets{\fint_{Q_{4r}(x_0)}\calN_{\pm} (\partial_t\calS f)(x)^2dx}^\frac{1}{2}+\frac{C}{r^{\frac{n}{2}}}\|f_1\|_2\\
    \le & C\brackets{\fint_{Q_{4r}(x_0)}\calN_{\pm} (\partial_t\calS f)(x)^2dx}^\frac{1}{2}+C\brackets{\fint_{Q_{4r}(x_0)}|f|^2dx}^\frac{1}{2}.
\end{align*}
  
          \item [Case 2.]$\fint_{Q_{2r}} V^\frac{1}{2}dx\le  \frac{1}{2r} $. Similar to \rref{derivationof2plusepsilon} we get

     \begin{align*}
            &\brackets{
            \fint_{Q_r(x_0)}|\nabla _{x,V} u_2(x,t_0)|^2 dx}^\frac{1}{2}\\
            \le &\frac{C}{r}\brackets{
            \fint_{t_0-cr} ^{t_0+cr} \fint_{Q_{2r}(x_0)}| u_2(x,t)-u_2(x,t_0)|^2 dxdt}^\frac{1}{2}\\&+\frac{C}{r}\brackets{\fint_{Q_{2r}(x_0)}| u_2(x,t_0)-C_Q|^2 dxdt}^\frac{1}{2}\\&+\frac{C|C_Q|}{r}\brackets{\fint_{Q_{2r}(x_0)}Vdx}^\frac{1}{2}\\ \defeq&(II_1+II_2+II_3).
        \end{align*}
         Here, $II_1$ and $II_2$ are the same as $I_1$ and $I_2$ respectively, the only different term is $II_3$. However by \rref{localfeffermanphong} we can still get 
          $$ II_3\le  C\fint_{Q_{2r}(x_0)}|\nabla_{x,V}u_2|dx. $$

       \end{itemize}
       Thus by the same type of argument, the desired reverse H\"older type inequality is satisfied.
    \end{proof}
    Using methods similar to \cite[pp. 18]{MethodOfLayerPotential}, Theorem \ref{2plusepsilon} is proved.
\end{proof}

\subsection{Double Layer Potential for $\Lambda^\beta_\mathcal{L}$ Data } \label{double layer potential lambda}
For the double layer potential operator, its behavior for $\Lambda^\beta_\mathcal{L}$ data $f$ is of great interest. In this section , without mentioning the Carleson estimates of its derivative, we only focus on its global H\"older continuity properties in the sense of being in Campanato-type space  $\Lambda^\beta_\mathcal{L}\left(\rnp_+\right)$.
\begin{theorem}\label{double layer potential Lambda beta}For $0\le \beta<{\alpha} $ we have
    \begin{equation*}
        \lnorm \mathcal{D}f\rnorm_{\Lambda^{\beta}_\mathcal{L}\left(\rnp_+\right)}\le C\|f\|_{\Lambda^\beta_\mathcal{L} (\rn)}.
    \end{equation*}
\end{theorem}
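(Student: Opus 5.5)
We argue by duality with the Hardy space theory from the previous subsections, following the scheme of \cite{MethodOfLayerPotential}. The Campanato norm of $\mathcal{D}f$ on $\rnp_+$ is estimated cube by cube. Fix a cube $Q=Q_r(X_0)\subset\rnp_+$ with $X_0=(x_0,t_0)$; by interior H\"older regularity \rref{holder with decay} it suffices to treat cubes touching or near the boundary, i.e.\ $t_0\lesssim r$. We distinguish the two regimes in the definition of $\Lambda^\beta_\mathcal{L}$: $r<\rho(x_0)$, where we control the oscillation $\fint_Q|\mathcal{D}f-\Avg{Q}\mathcal{D}f|^2$, and $r\ge\rho(x_0)$, where we control $\fint_Q|\mathcal{D}f|^2$.

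\emph{Local/far splitting.} We write $f=f_1+f_2+f_3$ with $f_1=(f-c_Q)\chi_{Q_{4r}(x_0)}$ and $f_2=(f-c_Q)\chi_{Q_{4r}(x_0)^\complement}$. Here $c_Q=\Avg{Q_{4r}(x_0)}f$ if $r<\rho(x_0)$, and $c_Q=0$ otherwise. The remaining term is $f_3=c_Q$, the constant.

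For $f_1$ we use the uniform $L^2$ slice bound \rref{L^2 boundedness of slices}, integrated over $t\in(0,2r)$. This gives $\fint_Q|\mathcal{D}f_1|^2\le Cr^{-n}\|f_1\|_2^2$, which is at most $Cr^{2\beta}\|f\|^2_{\Lambda^\beta_\mathcal{L}}$ in either regime, by the definition of the norm with $p=2$ (note the normalization $r^{-\beta}$ and $r^{-2\beta}$ there; we match whichever power the definition uses).

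For $f_2$ we use the kernel of $\mathcal{D}$. Since $\overline{\partial_{\nu^\ast}\Gamma^\ast(\xi,0|X)}$ is only an $L^2$-averaged object in $\xi$, we bound differences $\mathcal{D}f_2(X)-\mathcal{D}f_2(Y)$ for $X,Y\in Q$ by pairing $f_2$ on dyadic annuli $\Theta_{2^kr}(x_0)$ with the gradient estimates \rref{gradientdifferenceholder}, applied to the adjoint. Each annulus contributes at most
\[
C2^{-k\alpha}(1+2^krm_V(x_0))^{-N}\Big(\fint_{Q_{2^{k+1}r}}|f-c_Q|^2\Big)^{1/2}.
\]
The standard telescoping of averages gives $|c_{2^kQ}-c_Q|\lesssim k(2^kr)^{\beta}\|f\|$, so the sum converges precisely because $\beta<\alpha$. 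For $r\ge\rho(x_0)$ the decay factor $(1+2^krm_V)^{-N}$ instead makes the pointwise size of $\mathcal{D}f_2$ summable directly.

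\emph{The constant term, which is the main obstacle.} For harmonic-type operators $\mathcal{D}1=$ const, but here $\mathcal{D}1\neq$ const because of $V$. We must show that, for $r<\rho(x_0)$, the oscillation of $\mathcal{D}c_Q$ on $Q$ is at most $C|c_Q|(r/\rho(x_0))^{\delta}$ with $\delta>\beta$. We must also show that $|c_Q|(r/\rho)^\delta\lesssim r^\beta\|f\|$; this follows from iterating averages up to the critical scale $\rho(x_0)$, where the second term of the norm bounds $|c_{Q_\rho}|\lesssim\rho^\beta\|f\|$.

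To get the oscillation bound we compare with $\Gamma_0$ using Lemma \ref{differencefundamentalsolution}. The relevant input is $\mathcal{D}_0 1\equiv$ const for the operator without potential, together with \rref{double difference integral gradient fundamental solution}. The difference $\mathcal{D}1-\mathcal{D}_01$ then has H\"older-type oscillation $\lesssim (r m_V(x_0))^{\delta_0}$ near scale $r$, and the far part is handled by the exponential decay beyond $\rho(x_0)$. Alternatively, by the divergence formula, $\mathcal{D}1(X)=c+\int_{\rnp_+}V(\xi)\Gamma(\xi,\tau|X)\,d\xi d\tau$, up to sign and normalization, and this can be estimated exactly as $I_1$, $I_k$ in the proof of Theorem \ref{H^p estimate on slices}. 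This yields the factor $(r/\rho(x_0))^{\min\{\alpha,\delta_0\}}$.

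Since $\alpha\le\delta_0$, the condition $\beta<\alpha$ closes all estimates. Taking the supremum over cubes gives the theorem.
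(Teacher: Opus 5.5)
Your proof is correct. For $r\ge\rho(x_0)$ it coincides with the paper's Case 1, but for $r<\rho(x_0)$ it takes a genuinely different route.

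\textbf{The paper's route.} It splits $f$ at the critical scale $\rho_0=\rho(x_0)$, namely $f=f\chi_{Q_{2\rho_0}(x_0)}+f\chi_{Q_{2\rho_0}(x_0)^\complement}$. It uses \rref{gradientdifferenceholder} only beyond $2\rho_0$. Inside $Q_{2\rho_0}(x_0)$ it writes $\mathcal{D}=(\mathcal{D}-\mathcal{D}_0)+\mathcal{D}_0$. The factor $m_V(x_0)^{\delta_0}$ in Lemma \ref{differencefundamentalsolution} then lets it pair $\nabla(\Gamma^\ast-\Gamma^\ast_0)$ with $f$ itself, with no constant subtracted; the cost is a further splitting $f_{11}+f_{12}$ when $\beta>0$. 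The remaining piece $\mathcal{D}_0f$ is imported from \cite{MethodOfLayerPotential} via $\Lambda^\beta_\mathcal{L}\subset\Lambda^\beta$ and $\mathcal{D}_01=C^\prime$.

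\textbf{Your route.} You subtract the local average and run the classical Campanato telescoping with \rref{gradientdifferenceholder} over the whole complement of $Q_{4r}(x_0)$; the factor $(1+2^krm_V(x_0))^{-N}$ takes over past $\rho(x_0)$. You isolate the genuinely new feature, that $\mathcal{D}1$ is not constant, in the single term $c_Q\,\mathcal{D}1$. Its oscillation on $Q$ is $O\big((r/\rho(x_0))^{\alpha}\big)$, up to a logarithm when $\alpha=\delta_0$, which is harmless since $\beta<\alpha$. This exactly offsets $|c_Q|\lesssim\rho(x_0)^\beta$, or $1+\log(\rho(x_0)/r)$ when $\beta=0$.

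\textbf{What each buys.} With your second argument for $\mathcal{D}1$ (the Green identity $\mathcal{D}1=c+\int V\Gamma$, estimated like $I_1,I_k$ in Section \ref{section: estimates for layer potentials}), the proof is self-contained. It needs neither Lemma \ref{differencefundamentalsolution} nor the $\Lambda^\beta$ theory of $\mathcal{D}_0$. It also shows that the restriction $\beta<\alpha$ comes from $\mathcal{D}1$ being only approximately constant below the critical scale. The paper's route instead buys a direct reduction to the known $V=0$ result. If you use your first argument for $\mathcal{D}1$, note that the far part of $\mathcal{D}_01$ has no decay in $m_V$. There you need the H\"older regularity of $\nabla\Gamma_0$ on annuli (exponent $\alpha_0$), which is precisely the paper's splitting at $2\rho_0$.

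Two smaller points. First, the opening appeal to duality does not describe what you actually do.

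Second, and more important, your preliminary reduction to cubes with $t_0\lesssim r$ is not justified by \rref{holder with decay}. That estimate bounds the oscillation of a solution by its $L^2$ size. Now $\mathcal{D}f$ may have size $\sim\rho(x_0)^\beta$ on a cube of side $t_0\ll\rho(x_0)$. You would then get only $(r/t_0)^\alpha\rho(x_0)^\beta$, which is not $\lesssim r^\beta$. An oscillation version would have to handle the extra term $cV$, since $u-c$ is not a solution. The reduction is, however, unnecessary: the slice bound \rref{L^2 boundedness of slices} integrated over $(t_0-r,t_0+r)$, the kernel estimates for $f_2$, and the bound for the oscillation of $\mathcal{D}1$ are all uniform in $t_0$. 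You can simply drop it.
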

\begin{proof}
    Without loss of generality we assume $\|f\|_{\Lambda^\beta_\mathcal{L}}=1$. Choose a cube $$Q_r(x_0,t_0)=\left( Q_r(x_0)\times[t_0-r,t_0+r] \right)\cap \rnp_+.$$ We consider two cases as usual.
    
    \begin{itemize}
        \item [Case 1.]$r>\rho(x_0)$. We need to split $f$ into $f=f\chi_{Q_{4r}(x_0)}+f\chi_{Q_{4r}(x_0)^\complement} \defeq f_1+f_2$. For the compactly supported $f_1$, from \rref{L^2 boundedness of slices} we have
    \begin{align}
       \int_{Q_r(x_0,t_0)} |\mathcal{D}f_1| d\xi d\tau\le &Cr^{(n+1)/2}\left(\int_{Q_r(x_0,t_0)} |\mathcal{D}f_1|^2 d\xi d\tau\right)^{\frac 12}\nonumber\\
       \le &C r^{(n+2)/2}\sup_{t>0} \|\mathcal{D}\cdot (\cdot,t)\|_{2\rightarrow 2}\left(\int_{Q_{4r}(x_0)}|f|^2 dx\right)^{\frac 12}\le Cr^{n+1+\beta}.\label{basicLambdabeta}
    \end{align}
    For $f_2$ we have
    \begin{align*}
        &\int_{Q_r(x_0,t_0)} |\mathcal{D}f_2| d\xi d\tau \\\le &C \int_{Q_r(x_0,t_0)}\left|\int_{Q_{4r}(x_0
        )^\complement}\overline{\partial_{\nu^\ast,\eta}\Gamma^\ast(\eta,0|\xi,\tau)}f(\eta)d\eta\right| d\xi d\tau\\
        \le &C \int_{Q_r(x_0,t_0)}\sum_{k=2}^\infty\brackets{\int_{\Theta_{2^kr}(x_0)}|\nabla\Gamma^\ast(\eta,0|x,t)|^2 d\eta}^{\frac 12}\brackets{\int_{\Theta_{2^kr}(x_0)}|f|^2d\eta}^{\frac 12} d\xi d\tau\\
        \le &C_N\int_{Q_r(x_0,t_0)}\sum_{k=2}^\infty \frac{2^{-kn}r^{-n}}{(1+2^krm_V(x_0))^N} 2^{kn+k\beta}r^{n+\beta}d\xi d\tau\\\le& Cr^{n+1+\beta}.
    \end{align*}
        \item [Case 2.]$r\le \rho(x_0)$. Let $ \rho_0=\rho(x_0)$ and $2^{k_0}r\le \rho_0\le 2^{k_0+1}r$, we split $f$ in a different way following the lines in \cite{bmoldong}, as $f=f\chi _{Q_{2\rho_0}(x_0)}+f\chi _{\brackets{Q_{2\rho_0}(x_0)}^\complement}\defeq f_1+f_2$. For the term $f_2$, by \rref{gradientdifferenceholder} we have
        \begin{align*}
            &\int_{Q_r(x_0,t_0)} |\mathcal{D}f_2-\mathcal{D}f_2(x_0,t_0)|d\xi d\tau\\\le &C \int_{Q_r(x_0,t_0)}\sum_{k=k_0+1}^\infty\brackets{\int_{\Theta_{2^kr}(x_0)}\left|\nabla\brackets{\Gamma^\ast(\eta,0|\xi,\tau)-\Gamma^\ast(\eta,0|x_0,t_0)}\right|^2 d\eta}^{\frac 12}\\&\qquad\cdot\brackets{\int_{\Theta_{2^kr}(x_0)}|f|^2d\eta}^{\frac 12} d\xi d\tau\\
        \le &C\int_{Q_r(x_0,t_0)}\sum_{k=k_0+1}^\infty 2^{k(\beta-\alpha)}r^{\beta}d\xi d\tau\le Cr^{n+1+\beta}.
        \end{align*}

    Denote $\mathcal{D}_0$ the double layer potential operator with kernel $\overline{\partial_{\nu^\ast,\xi}\Gamma^\ast_0(\xi,0|x,t)}$, as defined in Section \ref{estimates fundamental solutions green}.  If $\beta=0$, using \rref{difference gradient fundamental solution} and noticing that $\int_{\Theta_{2^{-k}\rho_0}(x_0)}|f|^2 dx\le C(1+k)^2 2^{-kn}\rho_0^{n}$, note that  $\beta<\alpha$, we have
        \begin{align*}
          & \int_{Q_r(x_0,t_0)} \left|\left(\mathcal{D}-\mathcal{D}_0\right)f_1\right| d\xi d\tau \\\le & \int_{Q_r(x_0,t_0)} \left|\int_{Q_{2\rho_0}(x_0)}\overline{\partial_{\nu^\ast,\eta}\left(\Gamma^\ast(\eta,0|\xi,\tau)-\Gamma^\ast_0(\eta,0|\xi,\tau)\right)}f_1(\eta) d\eta\right| d\xi d\tau \\
          \le &\int_{Q_r(x_0,t_0)} \sum_{k=2}^{\infty}\brackets{\int_{\Theta_{2^{-k}\rho_0(x_0)}}\absvalue{\nabla(\Gamma^\ast(\eta,0|\xi,\tau)-\Gamma^\ast_0(\eta,0|\xi,\tau))}^2 d\eta}^{\frac 12}\\
          &\qquad\cdot\brackets{\int_{\Theta_{2^{-k}\rho_0(x_0)}}|f|^2 d\eta}^{\frac 12} d\xi d\tau\\
           \le &C\int_{Q_r(x_0,t_0)} \sum_{k=2}^{\infty}2^{-k\delta_0 +kn/2}\rho_0^{\delta_0 -n/2}\rho_0^{-\delta_0 }(1+k)2^{-kn/2}\rho_0^{n/2}  d\xi d\tau\\
           \le & Cr^{n+1}\sum_{k=2}^\infty(1+k) 2^{-k\delta_0} \le Cr^{n+1}.
        \end{align*}

        If $\beta>0$, following \cite{BMOLbongioanni}, we need to further decompose $f_1$ into $ f_1= f\chi_{Q_{2\rho_0}\setminus Q_{2r}}+f\chi_{Q_{2r}}\defeq f_{11}+f_{12}$. For $f_{11}$, using \rref{double difference integral gradient fundamental solution} and note that $\int_{\Theta_{2^{k}r}(x_0)}|f|^2 dx\le C 2^{kn}r^{n}\rho_0^{2\beta}$ we have
         \begin{align*}
 \int_{Q_r(x_0,t_0)} &\left|\left(\mathcal{D}-\mathcal{D}_0\right)f_{11}-\left(\mathcal{D}-\mathcal{D}_0\right)f_{11}(x_0,t_0)\right| d\xi d\tau \\
          \le & \int_{Q_r(x_0,t_0)} \left|\int_{Q_{2\rho_0\setminus Q_{2r}}}\overline{\partial_{\nu^\ast,\eta}\left(\left(\Gamma^\ast(\eta,0|\xi,\tau)-\Gamma^\ast_0(\eta,0|\xi,\tau)\right)\right.}\right.\\
          &\left.\left.\overline{-\left(\Gamma^\ast(\eta,0|x_0,t_0)-\Gamma^\ast_0(\eta,0|x_0,t_0)\right)}%\right
          \right) f_{11}(\eta) d\eta\right|^2 d\xi d\tau \\
          \le &\int_{Q_r(x_0,t_0)} \sum_{k=1}^{k_0}\left(\int_{\Theta_{2^{k}r}}|\nabla\left((\Gamma^\ast(\eta,0|\xi,\tau)-\Gamma^\ast_0(\eta,0|\xi,\tau))
          \right.\right.
          \\&\left.\left.\left.-(\Gamma^\ast(\eta,0|x_0,t_0)-\Gamma^\ast_0(\eta,0|x_0,t_0))\right)\right|^2 d\eta\right)^{\frac 12}\brackets{\int_{\Theta_{2^{k}r}}|f|^2 d\eta}^{\frac 12} d\xi d\tau\\
           \le &C\int_{Q_r(x_0,t_0)} \sum_{k=1}^{k_0}2^{k\delta_0 -k\alpha-kn/2}r^{\delta_0 -n/2}\rho_0^{-\delta_0 }2^{kn/2}r^{n/2}\rho_0^{\beta}  d\xi d\tau\\
           \le & Cr^{n+1}\sum_{k=1}^{k_0}2^{k\delta_0 -k\alpha}r^{\beta} 2^{k_0\beta-k_0\delta_0}\\
           \le & Cr^{n+1+\beta}2^{k_0(\beta-\alpha)}\le Cr^{n+1+\beta}.
        \end{align*}
        
For the term $f_{12}$, by $m_V(\xi)\sim m_V(x_0)$, note that $0<\beta<\delta_0$, we have
\begin{align*}
         & \int_{Q_r(x_0,t_0)} \left|\left(\mathcal{D}-\mathcal{D}_0\right)f_{12}\right| d\xi d\tau \\
          \le & \int_{Q_r(x_0,t_0)} \left|\int_{Q_{2r}(x_0)}\overline{\partial_{\nu^\ast,\eta}\left(\Gamma^\ast(\eta,0|\xi,\tau)-\Gamma^\ast_0(\eta,0|\xi,\tau)\right)}f_{12}(\eta) d\eta\right| d\xi d\tau \\
                        \le &\int_{Q_r(x_0,t_0)} \sum_{k=1}^{\infty}\brackets{\int_{\Theta_{2^{-k}r}}\absvalue{\nabla(\Gamma^\ast(\eta,0|\xi,\tau)-\Gamma^\ast_0(\eta,0|\xi,\tau))}^2 d\eta}^{\frac 12}\brackets{\int_{\Theta_{2^{-k}r}}|f|^2 d\eta}^{\frac 12} d\xi d\tau\\
           \le &C\int_{Q_r(x_0,t_0)} \sum_{k=1}^{\infty}2^{-k\delta_0 +kn/2}r^{\delta_0 -n/2}\rho_0^{-\delta_0 }(2^{-k}r)^{\frac n2}\rho_0^{\beta}  d\xi d\tau\\
           \le & Cr^{n+1+\beta}\left(\frac r{\rho_0}\right)^{\delta_0-\beta}\le Cr^{n+1+\beta}.
        \end{align*}

        As proved in \cite[Corollary 3.4]{MethodOfLayerPotential}, $\mathcal{D}_01=C^\prime$ where $C^\prime$ is a constants. Since $\Lambda_V^\beta\subset \Lambda^\beta$ for $0\leq\beta <\alpha$, we can get 
        \begin{align*}
            \int_{Q_r(x_0,t_0)} \left|\mathcal{D}_0f-c_Q\right| d\xi d\tau \le Cr^{n+1+\beta}
        \end{align*}
        for some $c_Q$, by methods similar to \cite{MethodOfLayerPotential}. Meanwhile noticing 
        \begin{align*}
            \int_{Q_r(x_0,t_0)} |\mathcal{D}_0f_2-\mathcal{D}_0 f_2(x_0,t_0)|d\xi d\tau\le Cr^{n+1+\beta},
        \end{align*}
        the proof finishes.
        
        \begin{remark}
            Note that following the same way of reasoning we can also get $$\sup_{t>0}\|\mathcal{D}f(\cdot,t)\|_{\Lambda^\beta_\mathcal{L}(\rn)}\le C\|f\|_{\Lambda^\beta_\mathcal{L}}.$$
        \end{remark}

    \end{itemize}
\end{proof}

\section{Rellich Estimates on the Boundary}\label{section: rellich estimates on the boundary}
In this section we consider $H^p$ Rellich estimates for $1-\varepsilon^ \prime<p\le 1$ on the boundary, providing a key tool for proving the invertibility of layer potentials. It is known in \cite[\S 4.3]{MorrisTurner} that $L^2$ Rellich estimates hold on the boundary if $A$ is a small perturbation of a block form or Hermitian matrix. However, we need to to propose a stronger assumption on $A$, namely $A$ is of block form or real symmetric, to make use of the estimates of Green functions and Neumann functions in order to reach our conclusions by Neumann and Green function estimates and decay property of atom data solutions.
\begin{theorem} There exists an $\varepsilon^\prime\in (0,\frac{1}{n+1})$, such that for $1-\varepsilon^\prime<p\le 1$, assuming $u$ satisfies $\mathcal{L}u=0, \,\tilN (\nabla_V u)\in L^p$ and $\partial_\nu u\in H^p_\mathcal{L}(\rn)$, we must have $u(x,0)\in H^{1,p}_V(\rn)$ with the estimate
    \begin{equation*}
        \|\nbltg u\|_{H^p}+\lnorm V^\frac{1}{2}u\rnorm _p\le C\|\partial_\nu u\|_{H^p_\mathcal{L}}.
    \end{equation*}
\end{theorem}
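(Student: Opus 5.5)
Since $\mathcal{L}u=0$ and $\tilN(\nabla_V u)\in L^p$, I will use the uniqueness of solutions to the Neumann problem with the Neumann function $N(x,t|\xi,\tau)$ on $\rnp_+$. This gives the representation
\[
u(x,t)=\int_{\rn} N(x,t|\xi,0)\,\partial_\nu u(\xi)\,d\xi .
\]
I will justify this first for $L^2$ data by the $(N)_2$ solvability of \cite{MorrisTurner} and then pass to $H^p_\mathcal{L}$ by density. The argument therefore reduces to atoms. Write $\partial_\nu u=g=\sum_j\lambda_j a_j$ with $H^p_\mathcal{L}$-atoms $a_j$, and set $u_j(x,t)=\int N(x,t|\xi,0)a_j(\xi)d\xi$. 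By the atomic characterization of $H^{1,p}_V$ recalled in Section \ref{endpointspace} and the $p$-triangle inequality, it suffices to show that for a single atom $a$ supported in $Q_r(x_0)$, $r\le\gamma_0\rho(x_0)$, the boundary trace $w=u_a(\cdot,0)$ satisfies
\[
\|\nbltg w\|_{H^p}+\|V^{\frac12}w\|_p\le C
\]
uniformly in $r$ and $x_0$.

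The plan for one atom parallels Theorem \ref{H^p estimate on slices}. The goal is to show that $\nbltg w$ is an approximate molecule of the classical $H^p$. Since $\int\nbltg w=0$ automatically, no cancellation condition needs checking. It then remains to show that $V^{1/2}w$ has $L^p$ norm bounded by a constant.

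\textbf{Local part.} On $Q_{4r}(x_0)$, I will use the $L^2$ Rellich estimate on the boundary (\cite[\S4.3]{MorrisTurner}, valid for small perturbations of block or real symmetric $A_0$):
\[
\|\nabla_{x,V}w\|_2\le C\|a\|_2\le Cr^{\frac n2-\frac np}.
\]
This gives conditions (i)–(ii) of a molecule, and by H\"older it also bounds $\|V^{1/2}w\|_{L^p(Q_{4r})}$.

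\textbf{Annuli.} On $\Theta_{2^kr}(x_0)$ I separate two cases.
\begin{itemize}
\item If $r\le\rho(x_0)$, then $\int a=0$, so I can subtract $N(x,t|x_0,0)$. Estimate \rref{differenceGreenNeumann} then gives $|u_a(x,t)|\le C r^{\alpha+n-\frac np}(|x-x_0|+t)^{-(n-1+\alpha)}$ together with decay factors $(1+|x-x_0|m_V)^{-N}$.
\item If $r>\rho(x_0)$, I use the decay in \rref{estiamteGreenNeumann} directly.
\end{itemize}
In both cases I apply Lemma \ref{basiclemma} (the boundary version, valid since the Neumann condition vanishes away from $Q_r$, via reflection or the boundary Caccioppoli inequality for the conormal problem). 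This gives
\[
\|\nabla_{x,V}w\,\chi_{\Theta_{2^kr}}\|_2\le C2^{-k\delta}(2^kr)^{\frac n2-\frac np},\qquad \delta=\alpha-n\Bigl(\frac1p-1\Bigr)>0
\]
when $p>\frac{n}{n+\alpha}$. This fixes $\varepsilon'$. Summing over $k$ with H\"older then yields $\|V^{1/2}w\|_p\le C$, and the molecule lemma gives $\|\nbltg w\|_{H^p}\le C$.

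\textbf{Main obstacle.} I expect the hard step to be justifying the representation via the Neumann function, together with the boundary Caccioppoli estimate near $\rn\times\{0\}$. This needs the pointwise bounds \rref{estiamteGreenNeumann}–\rref{differenceGreenNeumann} near the boundary. It also needs uniqueness for $(N)_p$ with $p\le1$, with boundary values taken in the distributional sense, to ensure the given $u$ coincides with $\sum_j\lambda_ju_j$. I would try first to prove the identity for $u$ with $L^2\cap H^p_\mathcal{L}$ data, using the variational definition of $\partial_\nu$ and Green's formula against $N$ truncated by cutoffs. The truncation errors would be controlled by $\tilN(\nabla_V u)\in L^p$ and the decay of $N$, after which I would pass to the limit.
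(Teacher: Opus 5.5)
\textbf{Where the proposal breaks.} Your outline matches the paper's: reduce to one atom, represent $u$ by the Neumann function, get pointwise decay from \rref{estiamteGreenNeumann}--\rref{differenceGreenNeumann}, use the global $L^2$ Rellich estimate near $Q_r(x_0)$, and finish with the molecule criterion for $p>\frac{n}{n+\alpha}$. Your exponent $\delta=\alpha-n(\frac1p-1)$ and the H\"older summation for $V^{\frac12}w$ are fine. The paper also takes the Neumann-function representation for granted, so that is not the real difficulty.

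The gap is the step that turns the decay of $u_a$ into $\|\nabla_{x,V}w\,\chi_{\Theta_{2^kr}}\|_2\le C2^{-k\delta}(2^kr)^{\frac n2-\frac np}$ for the trace at $t=0$. Lemma \ref{basiclemma} is an interior estimate. Its proof uses that $\partial_t u$ is again a solution on a full cylinder around the slice, so Caccioppoli for $\partial_t u$ plus the fundamental theorem of calculus in $t$ passes from solid integrals to slices. Neither substitute you propose does this at $t=0$:
\begin{itemize}
\item Boundary Caccioppoli for the conormal problem controls only solid integrals of $\nabla_V u$ over half-cubes, not the trace on $\rn\times\{0\}$. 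It also cannot be applied to $\partial_t u$, because $\partial_t u$ does not inherit the homogeneous Neumann condition: by the equation, its conormal derivative on the boundary is $\DIV_x$ of the first $n$ components of $A\nabla u$ minus $Vu$, which is not zero.
\item Even reflection gives, below the plane, the coefficient $JAJ$ with $J=\mathrm{diag}(1,\dots,1,-1)$. This equals $A$ only when $A_{j,n+1}=A_{n+1,j}=0$. That fails for general real symmetric $A_0$ and for every perturbation $A_0+\epsilon_0\tilde A$. The extended coefficient then jumps across $t=0$, which destroys exactly the $t$-independence you need on that slice.
\end{itemize}

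\textbf{What the paper does instead.} It gets a localized Rellich estimate from the global $L^2$ theory. For $2^{k-2}r\le R\le 2^{k-1}r$ it applies the $L^2$ Neumann/Rellich estimate on $\Omega_R=\{|x-x_0|+t>R\}$, flattened by the $t$-independent map $(x,t)\mapsto(x,t-(R-|x-x_0|)_+)$. Since $R>r$, the conormal data vanish on the flat part of $\partial\Omega_R$. Hence $\int_{\Theta_{2^kr}(x_0)}\tilN(\nabla_V u)^2\le C\int_{\partial\Omega_R\cap\rnp_+}|\nabla u|^2\,dS$. Averaging in $R$ turns the right side into $\frac{C}{2^kr}$ times the solid integral of $|\nabla u|^2$ over $\{2^{k-2}r<|x-x_0|+t<2^{k-1}r\}$. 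Boundary Caccioppoli applies legitimately to that solid integral, and the pointwise decay of $u$ then gives exactly your annular bound. For exactly real symmetric $A$ you could instead integrate $\partial_t(A\nabla u\cdot\nabla u+Vu^2)=2\DIV(\partial_t u\,A\nabla u)$ against a cutoff. That identity does not survive the complex $L^\infty$ perturbation, which is why the argument has to go through the global $L^2$ estimates.
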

\begin{proof}
    We assume an $H^p_\mathcal{L}$ atom data $\partial_\nu u=a(x)$. By $L^2$ Rellich estimate, it is easy to see that $\|\nabla_{x,V}u\cdot \chi_{Q_{cr}(x_0)}\|_2\le  \|\partial_\nu u\|_2\le Cr^{n-\frac{n}{p}}$.
    
    The solution $u$ can be represented using the Neumann function by
    \begin{align*}
        u(x,t)=\int_{Q_r(x_0)}N(x,t|\xi,0)a(\xi)d\xi.
    \end{align*}
    
    Since $N(x,t|\xi,\tau)$ and  
$\Gamma(x,t|\xi,\tau)$ share similar bounds, with the same argument in Theorem \ref{maximalhardy} , we reach the decay estimate for $u$, namely
\begin{equation*}
    |u(x,t)|\le \left\{\begin{aligned}
        &\frac{C r^{\alpha+n-\frac{n}{p}}}{\brackets{|x-x_0|+t}^{n-1+\alpha}},& rm_V(x_0)\le 1,\\
        &\frac{C_N r^{n-\frac{n}{p}}}{(1+(|x-x_0|+t)m_V(x_0))^N\brackets{|x-x_0|+t}^{n-1}} ,&1<rm_V(x_0)\le \gamma_0
    \end{aligned}\right.
\end{equation*}
    for $|x-x_0|+t>cr$.

    Define the Lipschitz domain $\Omega _R=\{(x,t)||x-x_0|+t>R\}$, the cone surface $\Gamma_{R}=\partial\Omega_R\cap \rnp_+$ and the annuli $K_{R_1,R_2}=\{(x,t)|R_1<|x-x_0|+t<R_2\}$. The boundary value problem on $\Omega_R$ can be converted to boundary value problems with the same type of coefficients on the half space $\rnp_+$, by a variable substitution $(x,t)\mapsto\left(x,t-(R-|x-x_0|)_+\right)$ with uniformly bounded Lipschitz norm.  Following the proof of \cite{RegSchrodingerTao}, consider the Rellich estimate on $\partial\Omega_R$ with $2^{k-2}r\le R\le 2^{k-1}R $, we know that 
    \begin{align*}
        \int_{\Theta_{2^k r}(x_0)}\left|\tilN (\nabla _Vu)\right|^2dx\le C \lnorm\partial_{\nu({\Omega_R})} u\rnorm_2^2\le C\int_{\Gamma_R}|\nabla u|^2 dS.
    \end{align*}
    Integrating with respect to $R$ from $2^{k-2}r $ to $ 2^{k-1}r$, by Caccioppoli's inequality near the boundary, since the Neumann boundary condition is satisfied, we know that 
\begin{align*}
    \int_{\Theta_{2^k r}}\left|\tilN (\nabla _Vu)\right|^2dx\le& \frac{C}{2^k r}\int_{K_{2^{k-2}r,2^{k-1}r}}|\nabla u(x,t)|^2 dxdt\\\le& \frac{C}{2^{3k} r^3}\int_{K_{2^{k-3}r,2^{k}r}}|u(x,t)|^2dxdt\\
    \le &C2^{-2k(\alpha+n-\frac np)}(2^kr)^{n-\frac{2n}{p}}.
\end{align*}

    From that we know, taking $p>\frac{n}{n+\alpha}$, we can verify that $\nbltg u$ is a molecule in $H^p(\rn)$ and the desired estimate holds.
\end{proof}

\begin{theorem} Suppose $V\in \mathcal{B}_q(\rn)$ with  $q\ge\frac {n+1}2$. There exists an $\varepsilon^\prime\in (0,\frac{1}{n+1})$, such that for $1-\varepsilon^\prime<p\le 1$, assuming $u$ satisfies $\mathcal{L}u=0,\, \tilN (\nabla_V u)\in L^p$ and $u(x,0)\in H^{1,p}_V(\rn)$, we must have $\partial_\nu u\in H^p_\mathcal{L}(\rn)$ with the estimate
    \begin{equation*}
        \|\partial_\nu u\|_{H^p_\mathcal{L}}\le C\|\nbltg u\|_{H^p}+C\lnorm V^\frac{1}{2}u\rnorm _p.
    \end{equation*}
\end{theorem}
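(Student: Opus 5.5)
We prove the regularity-to-Neumann Rellich estimate by the atomic approach used for the preceding theorem, with the roles of the data interchanged. By the atomic characterization of $H^{1,p}_V(\rn)$ quoted in Section \ref{endpointspace}, it suffices to treat data $u(\cdot,0)=a$, where $a$ is an $H^{1,p}_V$-atom supported in $Q_r(x_0)$ with $r<C\rho(x_0)$ and $\|\nabla a\|_2+\|V^{1/2}a\|_2\le Cr^{\frac n2-\frac np}$. We then show that $\partial_\nu u$ is an approximate molecule of $H^p_\mathcal{L}$ associated with $Q_r(x_0)$, with uniform constants. Linearity, and uniqueness of the solution in the class $\tilN(\nabla_V u)\in L^p$, then give the estimate for general data. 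For the atom we represent $u$ through the Green function, namely $u(x,t)=\int_{Q_r(x_0)}\overline{\partial_{\nu^\ast,\xi}G^\ast(\xi,0|x,t)}\,a(\xi)\,d\xi$, or equivalently we subtract a suitable extension of $a$.

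\textbf{Local part.} The $L^2$ Rellich estimate on the boundary (\cite[\S 4.3]{MorrisTurner}) gives
\[
\|\partial_\nu u\|_2\le C\|\nabla_{x,V}a\|_2\le Cr^{\frac n2-\frac np}.
\]
This is condition (ii) of the molecule, on $Q_{cr}(x_0)$.

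\textbf{Decay.} For $|x-x_0|+t>cr$ we derive a pointwise bound for $u$ from \rref{estiamteGreenNeumann} and \rref{differenceGreenNeumann}, mirroring the decay estimate in the previous proof. Since $a$ need not have mean zero, we first control the zeroth moment. The local Fefferman--Phong inequality \rref{localfeffermanphong} together with $r\lesssim\rho(x_0)$ gives $\|a\|_1\le Cr^{n+1-\frac np}$. The H\"older regularity of the Green function in its first argument then yields
\[
|u(x,t)|\le \frac{C_N\, r^{1+n-\frac np}\, t^{\alpha}}{(1+(|x-x_0|+t)m_V(x_0))^N(|x-x_0|+t)^{n-1+\alpha}}.
\]
Next we apply the Rellich estimate on the Lipschitz domains $\Omega_R$ with $2^{k-2}r\le R\le 2^{k-1}r$, as in the preceding theorem, now in the direction $\|\partial_\nu u\|_{L^2}\lesssim\|\nabla_{\tan}u\|_{L^2}+\|V^{1/2}u\|_{L^2}$ on $\partial\Omega_R$; the data vanish on the flat part outside $Q_r$. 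We average in $R$ and apply Caccioppoli's inequality near the boundary, which is allowed since $u(\cdot,0)=0$ there. This gives
\[
\int_{\Theta_{2^kr}}|\partial_\nu u|^2\le C2^{-2k\alpha'}(2^kr)^{n-\frac{2n}p}
\]
for some $\alpha'>n(\frac1p-1)$ once $p>\frac n{n+\alpha}$, which is condition (iii).

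\textbf{Condition (iv), the main obstacle.} It remains to bound $|\int_{\rn}\partial_\nu u|$ by $C\rho(x_0)^{n-\frac np}$, or by the logarithmic quantity when $p=1$. Following the proof of Theorem \ref{H^p estimate on slices}, the divergence theorem gives
\[
\int_{\rn}\partial_\nu u\,dx=\int_{\rnp_+}Vu\,dx\,dt,
\]
after testing against cutoffs whose gradients are controlled by the decay just obtained. We split the half-space into $D_{4r}(x_0)$ and the annuli $Z_{2^kr}$.

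On $D_{4r}$ we need $\int_{D_{4r}}V|u|$. Here we use Fefferman--Phong and the size bound $\sup|u|\lesssim r^{1+n-\frac np}\big(\fint V\big)^{N_0}$ from \rref{degiorginash} to get a bound of order $r^{n-\frac np}(r/\rho)^{\delta_0}$. On the far annuli, \rref{integrability of V} and the decay above give geometric sums bounded by $C\rho(x_0)^{n-\frac np}$. The hypothesis $q\ge\frac{n+1}2$ enters here: it makes $V$, viewed as a function on $\rnp$, integrable against $|X|^{-(n-1)}$ near the boundary cube, so the near part converges.

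We expect this moment estimate, and in particular a rigorous justification of the integration by parts when $u$ has only $\tilN(\nabla_V u)\in L^p$, to be the hardest step. We would first attempt it with the truncation $H_t$ used in Theorem \ref{H^p estimate on slices}, and then let $t\to0$ using the nontangential convergence of $\nabla u$.
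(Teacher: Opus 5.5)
Your overall architecture matches the paper's: atoms of $H^{1,p}_V$, the Green representation, the local $L^2$ Rellich bound, pointwise decay of $u$, Rellich on $\Omega_R$ plus boundary Caccioppoli for condition (iii), and $\int_{\rn}\partial_\nu u=\int_{\rnp_+}Vu$ split into near and far parts for condition (iv). However, two estimates that everything else rests on do not hold as you derive them.

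\emph{The decay estimate.} Pairing $\|a\|_1$ with a supremum of the kernel requires a pointwise bound on the boundary conormal derivative $\partial_{\nu^\ast,\xi}G^\ast(\xi,0|x,t)$ for $\xi\in Q_r(x_0)$. No such bound is available for bounded measurable coefficients. The estimate \rref{differenceGreenNeumann} only gives $|G(x,t|\xi,\tau)|\lesssim\tau^\alpha D^{1-n-\alpha}$ with $\alpha<1$, which says nothing about the derivative at $\tau=0$. Indeed the Poisson kernel is in general unbounded, even for $t$-independent real symmetric $A$ (pull back a Lipschitz graph domain with a reentrant edge). Your displayed bound is also not scale-consistent. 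At $t\sim|x-x_0|\sim r$ it has size $r^{2-n/p}$, whereas the natural size of $u$ there is $r^{1-n/p}$.

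The paper instead bounds $\nabla_\xi G(x,t|\cdot,0)$ in $L^2(Q_r(x_0))$. In the block case it uses reflection and Lemma \ref{basiclemma}; in the real symmetric case it uses comparison, Harnack, harmonic measure and $L^2$ Dirichlet solvability. It then pairs this with $\|a\|_2\le Cr\|\nabla a\|_2$ to obtain \rref{decayestimateGreen}, i.e. $|u|\lesssim r^{n-\frac np+\alpha}(1+Dm_V(x_0))^{-N}D^{1-n-\alpha}$ with $D=|x-x_0|+t$.

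The route you mention only in passing does work, and needs only the Green function bounds. Set $\tilde a(x,t)=a(x)\eta(t/r)$; then $u-\tilde a$ has zero Dirichlet data. So for $X$ away from $Q_{2r}(x_0,0)$ you get $|u(X)|\lesssim\|\nabla_V G(X|\cdot)\|_{L^2(Q_{2r}(x_0,0)\cap\rnp_+)}\|\nabla_V\tilde a\|_{L^2(\rnp_+)}$. Boundary Caccioppoli for $G(X|\cdot)$ together with \rref{differenceGreenNeumann} then gives the same decay. Condition (iii) and the far part of (iv) depend on this, so that is the argument you need to carry out instead of the $\|a\|_1$ one.

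\emph{The near part of (iv).} There is no $L^\infty$ bound for $u$ on $D_{4r}$. The estimate \rref{degiorginash} is interior only. At $Q_r(x_0)\times\{0\}$ the solution takes the values of $a$, which for $n\ge 3$ lies in $L^{2n/(n-2)}$ but need not be bounded. (Your stated amplitude $r^{1+n-n/p}$ is again off by a power of $r$.)

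The paper bounds $\int_{Q_{2r}(x_0,0)}V|u|$ by H\"older's inequality with $V^{1/2}\in L^{2q}$. It then applies the estimate $\|F\|_{L^{p(n+1)/n}(\rnp_+)}\lesssim\|\tilN F\|_p$ at $p=\frac{2qn}{(2q-1)(n+1)}$. This yields the bound $\|\tilN(V^{1/2}u)\|_p\,(r^2\fint_{Q_{cr}(x_0)}V)^{1/2}r^{n-\frac np}\lesssim(r/\rho(x_0))^{\delta_0/2}r^{n-\frac np}$, using the $L^p$ bound on $\tilN(\nabla_V u)$ from the molecule estimates.

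This matching of exponents is where $q\ge\frac{n+1}{2}$ is actually used: it produces $\delta=\min\{\alpha,1-\frac{n+1}{2q}\}$. The integrability of $V$ against $|X|^{1-n}$, which you give as the reason, needs only $q>\frac n2$. You need an integral estimate of this kind in place of the sup bound.
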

\begin{proof}
    Set $a(x)$ an atom in $H^{1,p}_V$ supported on $Q_r(x_0)$, we can write
    \begin{align*}
        u(x,t)=\int_{Q_{r}(x_0)}\partial_{\nu^\ast,\xi}G(x,t|\xi,0)a(\xi)d\xi.
    \end{align*}

    The first step is to prove an appropriate decay estimate for $u$. 

    If $A$ is of block form, we can perform a reflection argument to extend $G(x,t|\xi,\tau)$ to the cube $\tilde{Q}_{4r}(x_0,0) $ in $\rnp$ with the same estimate as \rref{estiamteGreenNeumann}. For $|x-x_0|+t>Cr$ where $C$ large enough, by Lemma \ref{basiclemma} we have
    \begin{align}\label{decayestimateGreen}
        |u(x,t)|\le &\brackets{\int_{Q_r(x_0)}\absvalue{ \nabla_\xi G(x,t|\xi,0)}^2 d\xi}^\frac{1}{2}\|a\|_2\nonumber\\
        \le& \frac{C}{r^{\frac 23}}\brackets{\int_{-2r}^{2r}\int_{Q_{2r}(x_0)}|G(x,t|\xi,\tau)-G(x,t|x_0,0)|^2 d\xi d\tau}^\frac{1}{2}r\|\nbltg a\|_2 \nonumber\\\le& \frac{Cr^{n-\frac{n}{p}+\alpha}}{\left(1+(|x-x_0|+t)m_V(x_0)\right)^N(|x-x_0|+t)^{n-1+\alpha}}.
    \end{align}

    If $A$ is real symmetric, in order to get the desired estimate for $\int_{Q_r(x_0)}\partial_\nu G(x,t|\xi,0)^2 d\xi$, denoting $w(\xi,\tau)=G(x,t|
    \xi,\tau)$, it suffices to prove%by the nonnegativity of Green function and comparison theorems (see for example \cite{WienerCriteriaDirichlet}), we can conclude
    \begin{align*}
        \int_{Q_r(x_0)}\tilN_r (\nabla w)^2(\xi) d\xi\le \frac{C}{r^3}\int_{Q_r(x_0,0)} G(x,t|\xi,0) ^2 d\xi d\tau,
    \end{align*}
    where $\tilN_r$ is the lower part of maximal function truncated at height $r$, namely $$\tilN_r (w)(x)=\sup_{|y-x|<\gamma s,s<r}\left( \fint_{Q_{\theta s}(y,s)}|w|^2 \right)^\frac{1}{2}.$$

    Let $\varphi(x,t)\in C_c^\infty(\rnp)$ be a smooth bump supported on $Q_{6r}(x_0,0)$, with $\varphi=1$ on $Q_{3r}(x_0,0)$ and $|\nabla \varphi|\le \frac{C}{r}$. Then we have
    \begin{align*}
        \varphi(y,s) w(y,s)=&\int_{Q_{6r}(x_0,0)}A(\xi)\left(G(y,s|\xi,\tau)\nabla w(\xi,\tau)\nabla\varphi(\xi,\tau)\right.\\&\left.+\nabla_{(\xi,\tau)}G(y,s|\xi,\tau) w(\xi,\tau) \nabla \varphi(\xi,\tau)\right)d\xi d\tau 
        \\\defeq & F_1(y,s)+F_2(y,s).
    \end{align*} 
    We talk about $F_2$ first, and $F_1$ can be handled the same way. Let $Q_l(y,s)$ be a cube where $l=\theta s$ and $|y-y_0|<s$ for some $y_0\in Q_r(x_0)$, then
    \begin{align*}
        &\fint_{Q_{l}(y,s)} |\nabla F_2(\eta,\sigma)|^2 d\eta d\sigma \\\le& \frac{C}{s^2}\fint_{Q_{l}(y,s)} | F_2(\eta,\sigma)|^2 d\eta d\sigma
        \\\le &\frac{C}{s^2 r^2}\sup_{(\eta,\sigma)\in Q_l(y,s)}\left( \int_{\Theta_{3r}(x_0,0)}\left| \nabla_{(\xi,\tau)}G(\eta,\sigma|\xi,\tau) w(\xi,\tau)  \right|d\xi d\tau \right)^2\\
        \le &\frac{C}{s^2 r^2}\sup_{(\eta,\sigma)\in Q_l(y,s)}\left( \int_{\Theta_{3r}(x_0,0)}\left| \nabla_{(\xi,\tau)}G(\eta,\sigma|\xi,\tau)  \right|^2 d\xi d\tau \right)\left( \int_{\Theta_{3r}(x_0,0)}w(\xi,\tau)  ^2 d\xi d\tau\right)\\
        \le &\frac{C}{s^2 r^4}\sup_{(\eta,\sigma)\in Q_l(y,s)}\left( \int_{\tilde{\Theta}_{3r}(x_0,0)}G(\eta,\sigma|\xi,\tau)^2 d\xi d\tau \right)\left( \int_{{\Theta}_{3r}(x_0,0)}  w(\xi,\tau)  ^2 d\xi d\tau\right)\\
        \le &\frac{Cr^{n-3}}{s^2 }\sup_{(\eta,\sigma)\in Q_l(y,s)}G_0(\eta,\sigma|A_r(x_0))^2 \int_{\Theta_{3r}(x_0,0)} w(\xi,\tau)  ^2 d\xi d\tau
    \end{align*}
    where $G_0$ is the Green function of the equation $-\DIV(A\nabla u)=0$, and $A_r(x_0)$ is a point in $\tilde{\Theta}_{3r}(x_0,0)$ with distance to both the boundary and $(x_0,0)$ are equivalent to constant multiple of $r$. The last step is supported by comparison theorem(see for example \cite{WienerCriteriaDirichlet}) and Harnack inequality. By the properties of harmonic measure $\omega$ in \cite[Corollary 1.3.6]{HarmonicAnalysisTechniqueKenig} we have%solvability of elliptic equation $-\DIV(A\nabla u)=0$ on bounded domains we have
%need improving
%\begin{align*}
   % \int_{Q_r(x_0)}\partial_\nu G_0(x,t|\xi,0) d\xi\le& %C \int_{\partial Q_{(2+\sigma)r}(x_0,0)}|\nbltg G_0(x,t|Q)|^2 dQ,
   % {Cr^{n-1}}G(x,t|x_0,cr)\\\le& \frac{Cr^{\alpha+n-1}}{\left(1+(|x-x_0|+t)m_V(x_0)\right)^N\left( |x-x_0|+|t| \right)^{n-1+\alpha}}
%\end{align*}
%integrating in $\sigma$ from $0$ to $1$ we get
%\begin{align*}
   % \int_{Q_r(x_0)}|\nabla G_0(x,t|\xi,0)|^2 d\xi\le& \frac{C}{r}\int_{Q_{3r}(x_0,0)}|\nabla G_0(x,t|\xi,\tau)|^2 d\xi d\tau\\\le& \frac{C}{r^3}\int_{Q_{4r}(x_0,0)}|G_0(x,t|\xi,\tau)-G_0(x,t|x_0,0)|^2 d\xi d\tau,
%\end{align*}
\begin{align*}
    \frac{1}{s}G_0(\eta,\sigma |A_r(x_0))\le \frac{C}{l^{n}}\omega^{A_{r}(x_0)}(Q_{l}(y))\le C\mathcal{M}(\partial_\nu G_0(y_0,0|A_r(x_0))),
\end{align*}
thus we have 
\begin{align*}
    &\int_{Q_r(x_0)}\tilN_r (\nabla w)^2(\xi) d\xi\\\le& Cr^{n-3} \int_{Q_r(x_0)}\partial_\nu G_0(\xi,0|A_r(x_0))^2d\xi  \int_{\Theta_{3r}(x_0,0)} w(\xi,\tau)  ^2 d\xi d\tau\\
    \le &\frac{C}{r^3} \int_{\Theta_{3r}(x_0,0)} w(\xi,\tau)  ^2 d\xi d\tau
\end{align*}
by the $L^2$ solvability of Dirichlet problem for $-\DIV(A(rx)\nabla u)=0$ on the half space proved in \cite{WeightedMaximalRegularityAuscher}, where the implicit constants are unifiorm in $r$. Therefore \rref{decayestimateGreen} also holds.

    With the decay behavior proved above we can see the first three criteria of molecule are justified for $\tilN(\nabla_V u)$.
    
    The second step is to verify that $\partial_\nu u $ satisfies the condition (iv) in the definition of approximate molecules in $H^p_\mathcal{L}(\rn)$. By Green's formula we have %We will prove it by appealing to the definition of $H^p_\mathcal{L}$ directly. Let $\Phi(X)$ be a bump function in $C_c^\infty(\rnp)$, supported on $Q_{2}(0)$ with $\varphi(X)=1$ when $x\in Q_1(0)$, and $\varphi(x)\ge 0$, $\int_{\rn}\varphi(x)dx=1$, and let $\varphi_t$ be its dilation. Suppose $\mathcal{P}_V f$ is defined as in \rref{definitionofPV}.

    %For the part of integral close to $x_0$ we have

    %\begin{align*}
     %   \brackets{\int_{Q_{2r}(x_0)}(\mathcal{P}_V \partial_\nu u)^p d\xi}^\frac{1}{p} \le r^{\frac{n}{p}-\frac{n}{2}}\brackets{\int_{Q_{2r}(x_0)}(\mathcal{P}_V \partial_\nu u)^2 d\xi}^\frac{1}{2}\le r^{\frac{n}{p}-\frac{n}{2}}\|\partial_\nu u\|_2\le C.
    %\end{align*}

\begin{align*}
    %&\int_{Q_{2r}(x_0)^\complement}\sup_{0<t<\rho(x)}\absvalue{\int_{\rn} \partial_\nu u(\xi) \varphi_t(x-\xi) d\xi}^pdx\\\le& \int_{Q_{2r}(x_0)^\complement}\sup_{0<t<\rho(x)}\absvalue{\int_{\rnp}A(\xi)\nabla u(\xi,\tau) \nabla\varphi_{t+\tau}(x-\xi)d\xi d\tau}^pdx\\&+\int_{Q_{2r}(x_0)^\complement}\sup_{0<t<\rho(x)}\absvalue{\int_{\rnp_+} V(\xi)u(\xi,\tau)\varphi_{t+\tau} (x-\xi)d\xi d\tau}^pdx\\
    %\le &\int_{Q_{2^Kr}(x_0)}V(x)|u(x,t)|dxdt+\int_{Q_{2^K r}(x_0)^\complement}V(x)|u(x,t)|dxdt\\
    \absvalue{\int_{\rn}\partial_\nu u dx}=&\absvalue{\int_{\rnp}V(x)u(x,t)dxdt}\\
    \le& \int_{Q_{2r(x_0,0)}}V(x)|u(x,t)|dxdt+\int_{Q_{2r}(x_0,0)^\complement}V(x)|u(x,t)|dxdt\\
   \defeq &I_1+ I_2.
\end{align*}
    By \rref{decayestimateGreen} it is easy to know that 
    \begin{align*}
        I_2\le &C r^{n-\frac{n}{p}+\alpha}\int_{\rnp}\frac{V(x)}{(1+(|x-x_0|+t)m_V(x_0))^N(|x-x_0|+t)^{n-1+\alpha}}dxdt \\\le &C\brackets{\frac{r}{\rho(x_0)}}^{\alpha+n-\frac{n}{p}}\rho(x_0)^{n-\frac{n}{p}}.
    \end{align*}
   For $I_1$, using H\"older inequality and the maximal function inequality \cite[Lemma A.2]{MethodOfLayerPotential} we have

     \begin{align*}
         I_1\le & \brackets{\int_{Q_{2r}(x_0,0)}V(x)^\frac{q}{2(q-1)}|u(x,t)|^\frac{q}{q-1}dxdt}^\frac{2q-1}{2q}\brackets{\int_{Q_{2r}(x_0,0)}V(x)^qdxdt}^\frac{1}{2q}\\
         \le &\brackets{ \int_{Q_{Cr}(x_0)}\tilN \left(V^\frac{1}{2}u\right) ^{\frac{2qn}{(2q-1)(n+1)}} dx}^\frac{(2q-1)(n+1)}{2qn}\brackets{\fint_{Q_{2r}(x_0)}V(x)dx}r^{\frac{n+1}{2q}},
     \end{align*}
     where we can take $ \frac{2qn}{(2q-1)(n+1)}=p$, namely $q=\frac{p(n+1)}{2p(n+1)-2n}$, to get

     \begin{align*}
         I_1\le &\lnorm\tilN\left(V^{\frac{1}{2}}u\right)\rnorm_p\brackets{r^2\fint_{Q_{cr}(x_0)}Vdx}^\frac{1}{2}r^{n-\frac{n}{p}}\\
         \le & C\brackets{\frac{r}{\rho(x_0)}}^\frac{\delta_0 }{2} r^{n-\frac{n}{p}}
     \end{align*}
     choose $\delta =
     \min\left\{ {\alpha} ,\frac{\delta_0 }{2},1-\frac{n+1}{2q}\right\}=\min\left\{ {\alpha} ,1-\frac{n+1}{2q}\right\}$ , for $\frac{n}{n+\delta}<p\le 1$ we can see $\partial_\nu u$ fits the definition of $H^p_\mathcal{L}$.
    %Consider a bump function $\phi(x)\in C_c^\infty(\rn)$,
\end{proof}

\begin{remark}
    Note that we have reached a stronger result than Theorem \ref{boundaryvaluedefinition} below, that $\partial_\nu u$ is a molecule for $H^{1,p}_V$ atom boundary data. Moreover, as the presence of Lemma \ref{basiclemma}, when the coefficient is independent of $t$ the average can be dropped.
\end{remark}

\section{Solution of the Equation}\label{section: solution of equation}

\subsection{Existence and Uniqueness}\label{subsection: existence and uniqueness of solution}
We are now ready to prove the Main Theorem \ref{mainthm} by finding the correct data $f$ for the layer potential according to the boundary data. 
\begin{theorem}\label{invertibility of layer potentials}
    Given $A_0(x)$ uniformly elliptic, there exist $\epsilon_0>0$ and $0<\varepsilon^\prime<\frac{1}{n+1}$ such that if the coefficients of operator $\mathcal{L}$ satisfy $A(x)=A_0(x)+\epsilon_0\tilde{A}(x)$  and $\|\tilde{A}\|_\infty \le 1$, and $V\in \mathcal{B}_q(\rn)$ with $q\ge\frac{n+1}{2}$, then the operators $\frac{I}{2}\pm \tilde{\mathcal{K}}$ are invertible in $H^p_\mathcal{L}$, and $\calS:H^p_\mathcal{L} \rightarrow H^{1,p}_V $ is invertible for $1-\varepsilon^\prime<p\le 2$.
\end{theorem}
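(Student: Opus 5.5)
The argument runs in three stages. First obtain invertibility at $p=2$ for the unperturbed operator. Next pass to the endpoint range $1-\varepsilon'<p\le 1$ using the Rellich estimates of Section \ref{section: rellich estimates on the boundary} together with a continuity (method of continuity) argument. Finally fill in $1<p<2$ by interpolation and $2<p<2+\varepsilon$ by a Šneĭberg-type stability argument; the latter range is not actually needed, since the statement stops at $p=2$.

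\textbf{Step 1 ($p=2$).} For $A_0$ block or real symmetric, invoke the $L^2$ Rellich estimates of \cite[\S4.3]{MorrisTurner}. Together with \rref{L^2 boundedness of slices} and \rref{maxi}, they give
\[
\|\nabla_{x,V}\calS f(\cdot,0)\|_2\sim\|\partial_\nu^\pm\calS f\|_2=\|(\tfrac I2\pm\tilde{\mathcal K})f\|_2 .
\]
This yields injectivity with closed range for $\frac I2\pm\tilde{\mathcal K}$ and for $\calS:L^2\to \dot W^{1,2}_V$. Surjectivity follows by the method of continuity along the path $A_s$, $s\in[0,1]$, joining a model operator to the given one, where the constants in the Rellich estimates are uniform. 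The known invertibility at $s=0$ (the case $A=I$, treated in \cite{NeumannSchShen,RegSchrodingerTao}), combined with the jump relations $\partial_\nu^+\calS f-\partial_\nu^-\calS f=f$, then propagates to $s=1$. Small $L^\infty$ perturbations $A_0+\epsilon_0\tilde A$ are handled by the analytic dependence of layer potentials on $A$ (as in \cite{AAAHK,MorrisTurner}): $\|\tilde{\mathcal K}_A-\tilde{\mathcal K}_{A_0}\|_{2\to2}\le C\epsilon_0$, so invertibility persists by a Neumann series.

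\textbf{Step 2 ($1-\varepsilon'<p\le1$).} Theorem \ref{H^p estimate on slices} and Theorem \ref{maximalhardy} give boundedness of $\frac I2\pm\tilde{\mathcal K}$ on $H^p_\mathcal{L}$ and of $\calS:H^p_\mathcal{L}\to H^{1,p}_V$. Apply the two Rellich theorems of Section \ref{section: rellich estimates on the boundary} to $u=\calS f$ in $\rr_\pm$; this is legitimate because $\tilN(\nabla_V\calS f)\in L^p$. They give
\[
\|f\|_{H^p_\mathcal{L}}\le\|\partial^+_\nu\calS f\|_{H^p_\mathcal{L}}+\|\partial^-_\nu\calS f\|_{H^p_\mathcal{L}}\le C\|\calS f(\cdot,0)\|_{H^{1,p}_V}\le C\|(\tfrac I2\pm\tilde{\mathcal K})f\|_{H^p_\mathcal{L}},
\]
a priori estimates that hold for $f$ in the dense subspace $H^p_\mathcal{L}\cap L^2$. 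Surjectivity is obtained as in Step 1. Alternatively, for data $g$ which is a finite sum of atoms, solve $g=(\frac I2\pm\tilde{\mathcal K})f$ in $L^2$ using Step 1. Then check that $f\in H^p_\mathcal{L}$: the $L^2$ solution $u=\calS f$ of the Neumann problem with atomic data is represented by the Neumann or Green function, and the decay and molecule arguments of Section \ref{section: rellich estimates on the boundary} show that $\calS f(\cdot,0)$ and $\partial_\nu^\mp\calS f$ are approximate molecules. The jump relation then gives $f\in H^p_\mathcal{L}$ with norm controlled, and density finishes the argument. The same reasoning applied to $H^{1,p}_V$ atoms yields invertibility of $\calS$.

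\textbf{Step 3 ($1<p<2$) and the main obstacle.} For $1<p<2$, interpolate between $H^{p_0}_\mathcal{L}$ (with $p_0\le1$) and $L^2$, using a complex interpolation scale for $H^p_\mathcal{L}$ and $H^{1,p}_V$. Invertibility at intermediate $p$ follows from Šneĭberg's lemma or, more directly, from the a priori Rellich estimates obtained by interpolating the two endpoint inequalities, together with compatibility of inverses on $L^2\cap H^p_\mathcal{L}$. I expect the main obstacle to be the surjectivity in Step 2. One must show that the $L^2$ inverse maps atoms to uniformly bounded elements of $H^p_\mathcal{L}$, in particular that approximate cancellation (condition (iv)) holds for $f=\partial_\nu^+\calS f-\partial_\nu^-\calS f$. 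This requires the restriction $q\ge\frac{n+1}2$ and the choice of $\varepsilon'$ through $\delta=\min\{\alpha,1-\frac{n+1}{2q}\}$, exactly as in the second Rellich theorem, and it also requires that $\epsilon_0$ be small enough that the Green/Neumann bounds \rref{estiamteGreenNeumann} remain valid for the perturbed operator.
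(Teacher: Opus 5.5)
Your treatment of the unperturbed problem agrees with the paper. Rellich lower bounds plus the jump relation give injectivity with closed range. Density of the range comes from solving in $L^2$ with atomic data and checking that the preimage lies in $H^p_\mathcal{L}$. The gap is the passage from $A_0$ to $A=A_0+\epsilon_0\tilde A$ for $1-\varepsilon'<p\le1$.

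You control the perturbation only in $L^2$, via $\|\tilde{\mathcal K}_A-\tilde{\mathcal K}_{A_0}\|_{2\to2}\le C\epsilon_0$. On the Hardy-space level you apply the two Rellich theorems of Section~\ref{section: rellich estimates on the boundary}, and their decay/molecule arguments, directly to $\calS f$ for the perturbed operator. You ask only that the pointwise bounds \rref{estiamteGreenNeumann} persist. But those theorems are proved only for $A$ exactly of block form or real symmetric.

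The Regularity-to-Neumann direction is where this fails. You need it for the middle inequality of your a priori chain, and again to show $\partial_\nu^-\calS f\in H^p_\mathcal{L}$ in your density argument. It rests on a bound for $\int_{Q_r(x_0)}|\nabla_\xi G(x,t|\xi,0)|^2d\xi$. The paper obtains this by reflection in the block case, and by positivity of the Green function, the comparison principle and harmonic measure in the real symmetric case. None of these survives a complex $L^\infty$ perturbation, and pointwise Green-function bounds do not yield it.

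Your other option, surjectivity ``as in Step 1'', would need a Neumann series in $H^p_\mathcal{L}$. That requires a Hardy-space version of the perturbation bound, which the $L^2$ analytic-dependence results you cite do not give.

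That Hardy-space perturbation bound is the missing ingredient, and it is the first step of the paper's proof. The paper asserts, for $1-\varepsilon'<p\le2$,
\begin{align*}
\|(\tilde{\mathcal K}_{1}-\tilde{\mathcal K}_{2})f\|_{H^p_\mathcal{L}}+\|\nabla_x(\calS_1-\calS_2)f\|_{H^p}\le C\|A_1-A_2\|_\infty\|f\|_{H^p_\mathcal{L}},
\end{align*}
obtained from the $L^2$ boundedness of $\nabla_V\mathcal{L}^{-1}\DIV_V$ by the method of \cite{RegularityComplexCoefficients}. The term $\|V^{\frac12}(\calS_1-\calS_2)f\|_p$ is treated separately for $p\le1$ on atoms. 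One inserts the weight $(1+m_V(x_0)|x-x_0|)^{\pm N}$, applies H\"older's inequality with $V^{\frac12}(1+m_V(x_0)|x-x_0|)^{-N}\in L^n$, and then uses Sobolev embedding.

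With this bound, Rellich estimates and density are only ever needed for unperturbed coefficients. In the real symmetric case that means along the path $\mu A_0+(1-\mu)I$ started at $-\Delta+V$. In the block case it means at $A_0$ itself, where $\tilde{\mathcal K}=0$ and the Kato-type result of \cite{BaileyPotential} gives density. Invertibility of $\frac I2\pm\tilde{\mathcal K}$ and of $\calS:H^p_\mathcal{L}\to H^{1,p}_V$ is then transferred to $A_0+\epsilon_0\tilde A$ by a Neumann series in these $p$-normed spaces. Once this is added, your interpolation step for $1<p<2$ can be carried out as you describe.
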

\begin{proof}
    The boundary layer potential operators are continuous with respect to perturbations of coefficients, namely
\begin{align*}
    \left\|\left(\tilde{\mathcal{K}}_1-\tilde{\mathcal{K}}_2\right)f\right\|_{H_{\mathcal{L}}^p(\rn)}\le C\|A_1-A_2\|_{\infty}\|f\|_{H_{\mathcal{L}}^p(\rn)},
\end{align*}
and
\begin{align*}
    \left\|\nabla_x(\mathcal{S}_1-\mathcal{S}_2)f\right\|_{H^p(\rn)}\le C\|A_1-A_2\|_{\infty}\|f\|_{H^p_{\mathcal{L}}(\rn)},
\end{align*}
    where $\calS_j$ and $K_j$ are the operators corresponding to $A_j$ and $1-\varepsilon ^\prime<p\le 2$, which can be shown using similar methods in \cite{RegularityComplexCoefficients}, as the operator $\nabla_V \mathcal{L}^{-1}\DIV_V$ is bounded in $L^2$. For $ \left\|V^\frac{1}{2}(\mathcal{S}_1-\mathcal{S}_2)f\right\|_{p}$ and $p\le 1$, we consider atom $f=a$, $\supp a\subset Q_r(x_0)$, we can argue as in the proof of Theorem \ref{maximalhardy} to get 
    \begin{align*}
        \left\|\nabla_x (1+m_V(x_0)|x-x_0|)^N \calS a(x,0)\right\|_{H^p}\le C,
    \end{align*}
    by methods in \cite{RegularityComplexCoefficients} we have
    \begin{align*}
        \left\|\nabla_x (1+m_V(x_0)|x-x_0|)^N (\calS_1-\calS_2) a(x,0)\right\|_{H^p}\le C\|A_1-A_2\|_{\infty},
    \end{align*}
    thus 
    \begin{align*}
        &\left\|V^\frac{1}{2}(\mathcal{S}_1-\mathcal{S}_2)a\right\|_{p}\\\le& \left\|V^{1/2}(1+m_V(x_0)|x-x_0|)^{-N}\right\|_{n}\left\|(1+m_V(x_0)|x-x_0|)^N (\calS_1-\calS_2)a(x,0)\right\|_{\frac{np}{n-p}}\\
        \le &\left\|\nabla_x (1+m_V(x_0)|x-x_0|)^N (\calS_1-\calS_2)a(x,0)\right\|_{H^p}\\
        \le &C\|A_1-A_2\|_{\infty}
    \end{align*}
    and it follows that
    \begin{align*}
        \left\|V^\frac{1}{2}(x)(\calS_1-\calS_2)f(x,0)\right\|_{p}\le C\|A_1-A_2\|_{\infty}\|f\|_{H^p_\mathcal{L}}.
    \end{align*}
    
    Suppose $A_0$ is real symmetric, in this case we utilize the continuity argument. For $\mu \in [0,1]$ we write $A_{\mu,0}=\mu A_0 +(1-\mu)I$. %and $A_\mu=(1-\mu)A_0+\mu \epsilon_0 \tilde{A}$, 
    By the Rellich estimates and jump relations, for any $f\in H^p_\mathcal{L}(\rn)$ we have
    \begin{align*}
        \|f\|_{H^p_\mathcal{L}}\le C \|\partial_\nu^- {\cal S}^- f\|_{H^p_\mathcal{L}}+C\|\partial_\nu{\cal S}^+ f\|_{H^p_\mathcal{L}}\le C\|\partial_\nu ^\pm {\cal S} ^\pm f\|_{H^p_\mathcal{L}}\le C\|\calS f\|_{H^{1,p}_V}
    \end{align*}
    holds uniformly for all $\mu\in [0,1]$ and all $\calS$ associated with coefficient $A_\mu$ and $A_{\mu,0}$.
    
    By results regarding the classical Schr\"odinger equation $\mathcal{L}_Iu=-\Delta u+V(x)u=0$ (see for example \cite{CampanatoSobolevSchrodinger}), boundary operators $\frac{I}{2}\pm \tilde{\mathcal{K}}_I:L^2(\rn)\rightarrow L^2(\rn)$ and  $\calS_I=(-\Delta_x+V)^{-\frac{1}{2}}:L^2(\rn)\rightarrow \dot{W}^{1,2}_V(\rn)$ are invertible, where $\tilde{\mathcal{K}}_I$ and $\calS _I$ are the layer potential operators associated with $\mathcal{L}_I$. Take $a(x)$ an $H^p_\mathcal{L}$ atom and assume $f\in L^2$ such that $\partial_\nu \calS f=a$. Then through Rellich estimate we know that $\|\partial_\nu^-{ \cal S}^-_I f\|_{H^p_\mathcal{L}}\le C\|\nabla_{x,V}\calS_I f\|_{H^{1,p}_V}\le C\|\partial_\nu \calS_I f\|_{H^p_\mathcal{L}}\le C$. That implies $f=\partial_\nu^-{\cal S}^-_I f+\partial_\nu {\calS}^+_I f\in H^p_\mathcal{L}(\rn)$, thus the range of $\frac{1}{2}\pm \tilde{\mathcal{K}}_I:H^p_\mathcal{L}(\rn)\rightarrow H^p_\mathcal{L}(\rn)$ is dense. Similarly we can prove that $\calS:H^p_\mathcal{L}(\rn)\rightarrow H^{1,p}_V(\rn)$ has dense range. That finishes the proof of the invertibility of layer potential operators. 

    Suppose otherwise that $A$ is of block form. Since the Rellich estimates are given, it only remains to prove the density of ranges. As $A$ is of block form, $\Gamma(x,t|\xi,\tau)$ must be an even function with respect to $t-\tau$, causing $\tilde{\mathcal{K}}=0$. Then we can deduce that $\mathcal{S}f(\cdot,0)=(-\DIV(A\nabla)+V)^{-\frac{1}{2}}f$, then by \cite{BaileyPotential} it is onto $\dot{W}^{1,2}_V$. Thus arguing as the real symmetric case we have $\tilde{\mathcal{K}}:H^p_\mathcal{L}\rightarrow H^p_\mathcal{L}$ and $\mathcal{S}:H^p\rightarrow H^{1,p}_V$ have dense ranges, thus invertible. 
\end{proof}
%need improving

\begin{theorem}\label{uniqueness of elliptic solutions}
    Let $A_0$ is of block form or real symmetric and $A$ satisfies the assumptions above for $\epsilon_0>0$ sufficiently small dependent on elliptic constants, DG-N-M constants and $[\![V]\!]_q$. Then we have
      \begin{itemize}
      \item[(i)] If $u$ satisfies $(N)_p$ with $g=0$ in the sense of $H^p_\mathcal{L}$, then we must have $u(x,t)=0$.
        \item[(ii)]If $u$ satisfies $(R)_p$ with $F=0$ in the sense of $H^{1,p}_V$, then we must have $u(x,t)=0$.
        %\item[$(3^\circ)$]
    \end{itemize}
\end{theorem}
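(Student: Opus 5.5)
The plan is a Green's-formula argument, in the spirit of \cite{MethodOfLayerPotential} and \cite{RegSchrodingerTao}, using the Neumann function $N$ for (i) and the Green function $G$ for (ii). Both are constructed in Section \ref{estimates fundamental solutions green} and satisfy \rref{estiamteGreenNeumann} and \rref{differenceGreenNeumann}. Fix a point $Y=(y,s)\in\rnp_+$ and a smooth cutoff $\eta_R$ equal to $1$ on $Q_R(y,0)$, supported in $Q_{2R}(y,0)$, with $|\nabla\eta_R|\le C/R$. In case (i) I would test the equation against $\eta_R(\cdot)N^\ast(\cdot|Y)$. Because $\partial_\nu u=0$ in the variational sense and $N^\ast$ has zero adjoint conormal derivative, the boundary terms drop out, and one obtains
\begin{align*}
 u(Y)=\int_{\rnp_+}\Big(A\nabla u\cdot\nabla\eta_R\,\overline{N^\ast}-A\nabla\overline{N^\ast}\cdot\nabla\eta_R\,u\Big)\,dX .
\end{align*}
The integral runs only over the annulus $Q_{2R}\setminus Q_R$, at distance about $R$ from $Y$ once $R\gg s$. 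Case (ii) goes the same way with $G^\ast(\cdot|Y)$. Here $u(\cdot,0)=0$ and $G^\ast(\cdot|Y)$ vanishes on the boundary, so the boundary terms again vanish. To make the boundary identities rigorous I would first do the computation on $\rn\times(\epsilon,\infty)$ and then let $\epsilon\to0$. For $p\le1$, the boundary datum $0\in H^p_\mathcal{L}$ enters through the distributional definition of $\partial_\nu u$, and nontangential convergence is controlled by $\tilN(\nabla_V u)\in L^p$.

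The main step is to show that the annular integral tends to $0$ as $R\to\infty$. By \rref{estiamteGreenNeumann} and Caccioppoli (Lemma \ref{basiclemma}), on the annulus we have $|N^\ast|\lesssim R^{1-n}(1+Rm_V)^{-N}$ and $\big(\fint|\nabla N^\ast|^2\big)^{1/2}\lesssim R^{-n}(1+Rm_V)^{-N}$. The factor $(1+Rm_V)^{-N}$ decays because $m_V(y)>0$ whenever $V\not\equiv 0$. I then need to control $\int_{\text{annulus}}|\nabla u|$ and $\int_{\text{annulus}}|u|$ by $\tilN(\nabla_V u)$. Covering the annulus by Whitney boxes gives
\begin{align*}
 \int_{Q_{2R}\setminus Q_R}|\nabla u|\lesssim R\int_{Q_{CR}(y)}\tilN(\nabla u).
\end{align*}
For $p\ge1$ this is at most $R^{1+n(1-1/p)}\|\tilN\|_p$. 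Multiplying by $R^{-n}$ gives $R^{1-n/p}$, which tends to $0$ when $p<n$, and in any case the factor $(1+Rm_V)^{-N}$ wins. The $u$ term is handled by the local Fefferman--Phong inequality \rref{localfeffermanphong}: on each box, $\fint|u|\lesssim \max\{R,(\fint V)^{-1/2}\}\fint|\nabla_V u|$. By \rref{zeta} and \rref{property of mv}, the factor $\max\{R,(\fint V)^{-1/2}\}$ grows at most polynomially in $R$, and this is absorbed by $(1+Rm_V)^{-N}$ with $N$ large.

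The genuine obstacle is the range $p<1$, where $\tilN(\nabla_V u)\in L^p$ does not give local $L^1$ control directly. I would first try to use the $L^\infty$-type bounds \rref{boundedness with decay} on Whitney boxes. These show that $\fint_{\text{box}}|\nabla_V u|$ is comparable to $\tilN(\nabla_V u)$ at every point of a surface ball of radius comparable to the box height. Hence $\tilN(\nabla_V u)\ge\lambda$ on a set of measure $\gtrsim R^n$, which gives $\lambda\lesssim R^{-n/p}\|\tilN\|_p$. The annular integral is then at most $R^{-n}R^{n+1}\cdot R\cdot R^{-n/p}$ times the decay factor, which still tends to $0$. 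In the degenerate case $V\equiv0$ I would fall back on the H\"older decay in \rref{differenceGreenNeumann}, following \cite{MethodOfLayerPotential}. Since $Y$ is arbitrary, this gives $u\equiv0$.
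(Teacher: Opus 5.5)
Your proposal leaves part (ii) essentially unproved. You say the boundary terms vanish because $u(\cdot,0)=0$ and $G^\ast(\cdot|Y)=0$ on $\rn$, but that is exactly what has to be shown. Here $u$ vanishes only nontangentially, and $\nabla u$ is controlled only through $\tilN(\nabla_V u)\in L^p$ with possibly $p<1$. So $u$ need not be $W^{1,2}$ up to $t=0$, and $\eta_R\,G^\ast(\cdot|Y)$ is not an admissible test function. Deferring this to an $\epsilon\to0$ limit is where the work lies, and you do not say why the resulting terms vanish. With the paper's cutoff $\Phi_{\epsilon,R}=\eta_{\epsilon,R}(t)\phi_R(x)$, where $|\eta_{\epsilon,R}'|\lesssim 1/\epsilon$ on $(\epsilon/2,\epsilon)$, one must show
\begin{align*}
\frac{1}{\epsilon}\int_{\epsilon/2}^{\epsilon}\int_{Q_{2R}(y)}\Big(|\nabla G^\ast(x,t|Y)|\,|u(x,t)|+|G^\ast(x,t|Y)|\,|\nabla u(x,t)|\Big)\,dx\,dt\longrightarrow 0 .
\end{align*}
This needs two specific inputs.
\begin{itemize}
\item The trace must be identically zero. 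Combined with the cone-wise Lipschitz bound $|u(X_1)-u(X_2)|\lesssim \tilN(\nabla_V u)(x)\,|X_1-X_2|$, this gives $|u(z,s)|\lesssim s\,\tilN(\nabla_V u)(x)$ for $(z,s)$ in the cone over $x$. This is the paper's formula $u(x,t)=\int_0^t\partial_\tau u\,d\tau$; there $u(\cdot,0)=c$ comes from $\nbltg u(\cdot,0)=0$, and $c=0$ from $V^{1/2}u\to 0$.
\item You need the boundary H\"older decay of the Green function coming from \rref{differenceGreenNeumann}: $|G^\ast(x,\tau|Y)|\lesssim \tau^\alpha$. By Caccioppoli this gives $(\fint_W|\nabla G^\ast|^2)^{1/2}\lesssim\tau^{\alpha-1}$ on Whitney boxes $W$ at height $\tau$.
\end{itemize}
Cover the strip by the roughly $(R/\epsilon)^n$ Whitney boxes of size $\epsilon$, and let $\lambda_W$ be the infimum of $\tilN(\nabla_V u)$ over the shadow of $W$. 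Both pieces are then $\lesssim \epsilon^{\alpha}\sum_W \epsilon^n\lambda_W\le \epsilon^{\alpha+n-n/p}\|\tilN(\nabla_V u)\|_p$ for $p\le 1$. This tends to $0$ exactly when $p>n/(n+\alpha)$. This is the content of the paper's proof of (ii), following \cite{SmallCarleson}, and it is where the lower bound on $p$ enters. The large-$R$ annulus, which you treat as the main difficulty, is the easy part.

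For (i) your route is essentially the paper's: a Neumann-function representation with a cutoff, after which the cutoff terms tend to $0$. The difference is the zero-order term. The paper shows $u\to 0$ at infinity along cones using \rref{boundedness with decay}, and then $\|\calN(u)\|_{np/(n-p)}\lesssim 1$. You instead use local Fefferman--Phong at scale $R\gtrsim\rho(y)$. That works on interior boxes of size $R$, where it gives $(\fint|u|^2)^{1/2}\lesssim R^{1-n/p}$. But the annulus also reaches $t=0$, where $\nabla N^\ast$ is only known to be locally in $L^{2+\varepsilon}$. There, $\fint|u|$ on $R$-boxes does not control $\int|\nabla N^\ast|\,|u|$. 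You need pointwise bounds for $u$ on small Whitney boxes, for example by chaining the cone-wise Lipschitz estimate down from height $R$.

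Two smaller points. The ``obstacle'' you flag for $p<1$ is not one: $\|\nabla_V u\|_{L^{p(n+1)/n}(\rnp_+)}\lesssim\|\tilN(\nabla_V u)\|_p$ with $p(n+1)/n>1$ gives the gradient term $O(R^{1-n/p})$ directly, which is the paper's bound for $I_2$. And the fallback for $V\equiv 0$ cannot succeed: constants then solve $(N)_p$ with zero data, so (i) genuinely requires $V\not\equiv 0$.
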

\begin{proof}

    We first prove (i). Let $ u_\epsilon(x,t)=u(x,t+\epsilon)$, then $\nabla_V u$ is locally $L^2$ in $\rnp_+$. Define the cutoff function $\eta_R\in C_c^\infty(\rnp)$ supported on $Q_{2R}(X_0)$ with $\eta_R|_{Q_R(X_0)}=1 $ and $|\nabla \eta_R|\le \frac{C}{R}$. By the explicit expression of solutions we have
    \begin{align*}
        &u_\epsilon(X)\eta_R(X)\\=&\int_{\rnp_+} A^\ast (Y)\nabla N(X|Y)u_\epsilon(Y) \nabla \eta_{R}(Y)dY-\int_{\rnp_+} N(X|Y)A(Y)\nabla u_\epsilon(Y)\cdot\nabla\eta_{R}(Y)dY\\
        &+\int_{\rn} N(X|\sigma)\partial_\nu u_\epsilon(\sigma) \cdot \eta_R(\sigma)d\sigma+\int_{\rn}N(X|\sigma) u_\epsilon(\sigma) \cdot \partial_\nu\eta_R(\sigma)d\sigma\\
        \defeq&I_1+I_2+I_3+I_4.
    \end{align*}

    It is clear to see $I_3\rightarrow 0 $ as $\epsilon\rightarrow 0$ for fixed $R$.  For $I_2$, by Whitney decomposition it is clear that
    \begin{align*}
        |I_2|\le& \int_{Q_{2R}(X)}\frac{C}{R^{n}} |\nabla u_\epsilon(\xi,\tau)| d\xi d\tau\le \frac{CR^{n+1}}{R^n}\brackets{\fint_{Q_{2R}(X)}|\nabla u|^{\frac{p(n+1)}{n}}d\xi d\tau}^{\frac{n}{p(n+1)}}\\\le& \frac{C}{R^{\frac{n}{p}-1}}\brackets{\int_{\rn}\tilN(\nabla u)^p}^{\frac{1}{p}}\le \frac{C}{R^{\frac{n}{p}-1}}
    \end{align*}
    which tends to $0$ uniformly in $\epsilon$ as $R\rightarrow\infty$.

    For $I_1$ and $I_4$, we need the estimate for $\|\calN (u)\|_{\tilde{q}}$ for some index $\tilde{q}$. Given any point $X^\prime \in \rnp_+$ we have ${t^\prime }^n\brackets{\fint_{Q_{\theta t^\prime}(X^\prime)}|\nabla u|^2 dxdt}^\frac{p}{2}\le C\int_{Q_{t^\prime}(x^\prime)}\tilN(\nabla u)^p dx\le C$, thus we have 
    \begin{align*}
        \sup_{X_1,X_2\in Q_{\theta^\prime t^\prime}(X^\prime)}|u(X_1)-u(X_2)|\le& 2C\brackets{\fint_{Q_{\theta t^\prime}(X^\prime)}|u-\Avg{Q_{\theta t^\prime}(X^\prime)}u|^2 d\xi d\tau}^\frac{1}{2}\\\le& Ct^\prime\brackets{\fint_{Q_{\theta t^\prime}(X^\prime)}|\nabla u|^2d\xi d\tau}^\frac{1}{2}\le \frac{C}{{t^\prime}^{\frac{n}{p}-1}}.
    \end{align*}

    From this, by summing over dyadic cubes, we know that $\displaystyle\lim_{\underset{t^\prime\rightarrow \infty}{X^\prime\in \Gamma_x}}u(x^\prime,t^\prime)=l(x)$ exists uniformly in $x\in \rn$ when $p<n$, where $\Gamma_x $ represents the cone with vertex $(x,0)$. Moreover
\begin{align*}
    |l(x)|^2\le \lim_{t^\prime\rightarrow\infty}\sup_{Q_{\theta^\prime t^\prime}(x,t^\prime)}|u(\xi,\tau)|^2 \le \lim_{t^\prime\rightarrow\infty}\frac{C_N}{(1+t^\prime m_V(x))^N}\int_{Q_{\theta^\prime t^\prime}(x,t^\prime)}|u(\xi,\tau)|^2 d\xi d\tau=0.
\end{align*}
    thus we can write $u(x,t)=\int_{t}^\infty \partial_\tau u(x,\tau) d\tau$, and the proof that $\|\calN (u)\|_{\frac{np}{n-p}}\le C$ follows from \cite[pp. 667]{NeumannSchTao}, thus we have $I_3,I_4\rightarrow 0$ uniformly in $\epsilon$ as $R\rightarrow \infty$. 

    Then we follow \cite{SmallCarleson} to give a proof of (ii). For any $X_0=(x_0,t_0)\in \rnp_+$, let $\eta_{\epsilon,R}\in C^\infty _c(\reals)$ supported on $\left[\frac{\epsilon}{2},2R\right]$, satisfying $\eta_{\epsilon,R}(t)=1$ in $[\epsilon,R]$ and $|\nabla \eta_{\epsilon,R} (t)|\le \frac{C}{\epsilon}$ for $t\in \left[\frac{\epsilon}{2},\epsilon\right]$, $|\nabla \eta_{\epsilon,R} (t)|\le \frac{C}{R}$ for $t\in [R,2R]$. Also define $\phi_R(x)\in C_c^\infty (\rn)$ supported on $ Q_{2R}(x_0)$, with $\phi_R(x)=1$ for $x\in Q_{R}(x_0)$, $|\nabla\phi_R(x)|\le \frac{C}{R}$. Finally we define cutoff function $\Phi_{\epsilon,R}(x,t)=\eta_{\epsilon,R}(t)\phi_R(x)$. %Define three domains $\Omega_j=\Omega_j^{\epsilon,R}(j=1,2,3)$ where $\nabla\Phi\ne 0$, as

    %\begin{align*}
     %   \Omega_1=&Q_{2R}(x_0)\times\left(\frac{\epsilon}{2},\epsilon\right),\\
      %  \Omega_2=&Q_{2R}(x_0)\times(R,2R),\\
       % \Omega_3=&\Theta_{R}(x_0)\times \brackets{\frac{\epsilon}{2},2R}.
    %\end{align*}
    
    With  $\Phi_{\epsilon,R}$ defined as above, by the definition of Green function we arrive at the formula 
    \begin{align*}
        &u(X)\Phi_{\epsilon,R}(X)\\=&\int_{\rnp_+} A^\ast (Y)\nabla G(X|Y)u(Y) \nabla \Phi_{\epsilon,R}(Y)dY-\int_{\rnp_+} G(X|Y)A(Y)\nabla u(Y)\cdot\nabla\Phi_{\epsilon,R}(\xi)dY
        \\=&\sum_{k=1}^{3}\left(\int_{\Omega_k}A^\ast (Y)\nabla G(X|Y)u(Y) \nabla \Phi_{\epsilon,R}(Y)dY-\int_{\Omega_k} G(X|Y)A(Y)\nabla u(Y)\cdot\nabla\Phi_{\epsilon,R}(\xi)dY\right)
        %\\\defeq&\sum_{k=1}^3\left(I_{1,k}+I_{2,k}\right).
    \end{align*}

    For $u$ near the boundary, we notice that by a well-known regarding $\dot{W}^{1,p}$ or $H^{1,p}$ space we know $u(x,0)$ can be defined as a function with $\nbltg u(x,t)\rightarrow \nbltg u(x,0)=0$ as $t\rightarrow 0$ , thus $u(x,0)=c$. Then by the convergence of $V^\frac{1}{2}u(x,t)$ to $0$ in the sense of distributions, we know that $c=0$. Thus we can write $u(x,t)=\int_{0}^t  \partial_t u(x,\tau)d\tau$. 
    By the definition of $\Phi_{\epsilon,R}$ and the pointwise estimate of Green function derived by \rref{differenceGreenNeumann}
    \begin{equation*}
        |G(x,t|\xi,\tau)|\le\frac{C_N\min\left\{ c(|x-\xi|+|t-\tau|)^\alpha,{t^\alpha}\right\} \min\left\{ c(|x-\xi|+|t-\tau|)^\alpha,{\tau^\alpha}\right\}}{(1+(|x-\xi|+|t-\tau|)m_V(x))^N(|x-\xi|+|t-\tau|)^{n-1+2\alpha}},
    \end{equation*}
     use the same type of estimate presented in \cite{SmallCarleson} we know that
     %\begin{align*}
      %   I_{2,1}\le \frac{C}{\epsilon}\int_{\Omega_2}\frac{\tau^\alpha}{T^{n-1+\alpha}}|\nabla u(\xi,\tau)|d\xi d\tau
     %\end{align*}
     %
     %Finally we get that 
     $u(X)=0$ by letting $\epsilon\rightarrow 0$ and then $R\rightarrow \infty$.
\end{proof}

\begin{remark}
    Note that the uniqueness is true once if the estimates for Green and Neumann functions \rref{estiamteGreenNeumann}, \rref{differenceGreenNeumann} hold, which applies to a broader class of matrices $A$ such that the weak solutions satisfy DG-N-M type local H\"older continuity on neighborhoods of points on the boundary, regardless of $A$ dependent of $t$ or not.
\end{remark}

Finally Theorem \ref{mainthm} is perfectly proved. 

\subsection{Boundary Values of Solutions}

We now determine the value of $u$ and $\nabla_V u$ on the boundary $\rn\times0$ for solutions of \rref{mainequation} with finite maximal function norms. The following theorem shows that the spaces $H^p_\mathcal{L}$ and $H^{1,p}_V$ for boundary data are necessary.

\begin{theorem}\label{boundaryvaluedefinition}
   $V\in \mathcal{B}_q(\rn)$ with  $q\ge\frac {n+1}2$.  Suppose $u$ satisfies $\mathcal{L}u=0$ and $\tilN(\nabla_V u)\in L^p(\rn)$. \begin{itemize}
        \item[(i)]There exists an $\varepsilon>0$, such that if $1<p<2+\varepsilon$, then there exists a function $u_0(x)\in H^{1,p}_V(\rn)$ and a function $g\in L^p(\rn)$ such that
        \begin{align}\label{variationalderivative}
            \int_{\rnp_+}(A\nabla u\nabla\Phi+Vu\Phi)dxdt=\int_{\rn}g\varphi dx
        \end{align}
        for every $\varphi\in C_c^\infty(\rn)$ and $\Phi(x,t)\in C_c^\infty(\rnp)$ such that $\Phi(x,0)=\varphi(x)$. And the following convergences hold as $t\rightarrow 0$:
    \begin{align*}
        u(x,t)\rightarrow u_0(x),\, &\textup{n.t. a.e.}x\in \rn,\\
        \fint_t^{2t} \nbltg u(\cdot,\tau)  d\tau &\overset{L^p}{\rightharpoonup} \nbltg u_0,\\
         \partial_\nu u(\cdot,t)  d\tau &\overset{L^p}{\rightharpoonup} g,
    \end{align*}
    with the bound
    \begin{align*}
         \| u_0\|_{\dot{W}^{1,p}_V}+\|g\|_{p}\le \|\tilN(\nabla_V u)\|_{p}.
    \end{align*}
    \item [(ii)]There exists a $\delta>0$ such that if $\frac{n}{n+\delta}<p\le 1$, then there exists a function $u_0(x)\in H^{1,p}_V(\rn)$ and a function $g\in H^p_{\mathcal{L}}(\rn)$ such that \rref{variationalderivative} holds and 
    \begin{align*}
        u(x,t)\rightarrow u_0(x),\, &\textup{n.t. a.e.}x\in \rn,\\
        \fint_t^{2t} \nbltg u(\cdot,\tau) d\tau &\overset{\mathcal{S}^\prime}{\rightharpoonup} \nbltg u_0,\\
        \partial_\nu u(\cdot,t)  &\overset{\mathcal{S}^\prime}{\rightharpoonup} g
    \end{align*}
    as $t\rightarrow 0$, where the last two convergences are interpreted in the sense of tempered distribution, with the following bound
    \begin{align*}
        \| u_0\|_{H^{1,p}_V}+\|g\|_{H^p_\mathcal{L}}\le \|\tilN(\nabla_V u)\|_{p}.
    \end{align*}
    \end{itemize}
\end{theorem}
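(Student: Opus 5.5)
We follow the scheme of \cite[Thm.~1.3 and \S 5]{MethodOfLayerPotential} and \cite{AAAHK}, adapted to the total derivative $\nabla_V$, and treat the tangential trace $u_0$ and the conormal trace $g$ separately.

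\textbf{Existence of $u_0$ and nontangential convergence.} For the trace $u_0$, fix $x$ and a cone $\Gamma_x$. As in the uniqueness proof of Theorem \ref{uniqueness of elliptic solutions}, Lemma \ref{basiclemma} together with the Whitney averages gives
$$|u(X_1)-u(X_2)|\le C t\,\tilN(\nabla u)(x)$$
for $X_1,X_2$ in a Whitney cube at height $t$. Telescoping over dyadic heights, with the vertical bound $|u(\xi,\tau)-u(\xi,\tau')|\le\int_{\tau'}^{\tau}|\partial_t u|$, shows that $u$ is nontangentially Cauchy as $t\to0$ wherever the truncated quantity $\int_0^{t}\fint|\partial_\tau u|\,d\tau\to 0$. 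This holds a.e., because
$$\mathcal{N}\Big(\int_0^t|\partial_\tau u|\Big)\lesssim t\,\tilN(\nabla u)$$
and $\tilN(\nabla u)<\infty$ a.e. This defines $u_0$ a.e.

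To bound $u_0$: the averages $h_t=\fint_t^{2t}\nbltg u(\cdot,\tau)\,d\tau$ satisfy $|h_t|\le C\tilN(\nabla u)$ pointwise. For $p>1$, weak compactness in $L^p$ produces a limit, and it must equal $\nbltg u_0$ by testing against $\partial_j\varphi$. Similarly $V^{1/2}u(\cdot,t)\to V^{1/2}u_0$ by Fatou, which gives $\|u_0\|_{\dot W^{1,p}_V}\le C\|\tilN(\nabla_V u)\|_p$.

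For $p\le1$, we show that $\{h_t\}$ is uniformly bounded in $H^p$ by estimating its grand maximal function. For a Schwartz $\phi$, we have $|h_t*\phi_s|\lesssim\mathcal{M}(\tilN(\nabla u)^{r})^{1/r}$ with $r<p$ when $s\gtrsim t$; when $s\ll t$ we move the derivative onto $u$ at height $s$ and use the Whitney comparison. Extracting a weak-$\ast$ limit in $\mathcal{S}'$ and using the closedness of $H^p$ then gives $\nbltg u_0$.

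\textbf{Existence and bounds for $g$.} Define $g_t=\partial_\nu u(\cdot,t)=-\mathbf e_{n+1}\cdot A\nabla u(\cdot,t)$, interpreted through the averages $\fint_t^{2t}$. By Lemma \ref{basiclemma} and $t$-independence, $\|g_t\|_p\le C\|\tilN(\nabla u)\|_p$. Let $g$ be a weak limit. Identity \rref{variationalderivative} follows by applying the divergence theorem on $\{\tau>t\}$, which is legitimate since $\mathcal{L}u=0$ there. This gives
$$\int_{\tau>t}(A\nabla u\nabla\Phi+Vu\Phi)=\int g_t\,\Phi(\cdot,t),$$
and we let $t\to0$. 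The left side converges because $\nabla_V u\in L^1_{loc}$ up to the boundary, which follows from $\tilN(\nabla_V u)\in L^p$, $p>\frac n{n+1}$, via \cite[Lemma A.2]{MethodOfLayerPotential}.

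For $p\le1$, the bound $\|g\|_{H^p_\mathcal{L}}\lesssim\|\tilN(\nabla_V u)\|_p$ is the main obstacle, because the zeroth-order term prevents cancellation. We plan to bound $\mathcal{P}_\rho g$ directly. For $\varphi_s$ with $s<\rho(x)$, take $\Phi(y,\tau)=\varphi_s(x-y)\eta(\tau/s)$. Then \rref{variationalderivative} gives
$$|\langle g,\varphi_s\rangle|\le\int|A\nabla u\nabla\Phi|+\int V|u||\Phi|.$$
The first integral is at most $C\,\mathcal{M}(\tilN(\nabla u)^r)^{1/r}(x)$ by the Whitney argument. For the second, use Hölder exactly as in the estimate of $I_1$ in the Rellich theorem: with $V\in\mathcal B_q$, $q\ge\frac{n+1}{2}$, it is bounded by
$$\Big(\fint\tilN(V^{1/2}u)^{r}\Big)^{1/r}\Big(s^2\fint_{Q_s(x)}V\Big)^{1/2},$$
and the last factor is $\le C$ by \rref{zeta} since $s<\rho(x)$. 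Taking $r<p$ close to $p$ (this forces $p>\frac{n}{n+\delta}$, with $\delta$ as in that proof) and applying the maximal theorem in $L^{p/r}$ yields the bound, and also the $\mathcal{S}'$ convergence of $g_t$ to $g$.
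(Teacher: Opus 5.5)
There is a genuine gap at the step your whole treatment of $u_0$ rests on. You assert the Whitney oscillation bound $|u(X_1)-u(X_2)|\le Ct\,\tilN(\nabla u)(x)$, but what you cite does not prove it.

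Lemma \ref{basiclemma} only gives $L^{2+\tilde\varepsilon}$ control of $\nabla_V u$ on horizontal slices. For $n\ge 3$ that carries no pointwise information. Pointwise control has to come from the De Giorgi--Nash--Moser bounds \rref{degiorginash}, \rref{moser}, \rref{boundedness with decay}. These estimate $u$ by $(\fint|u|^2)^{1/2}$, not $u-c$ by $(\fint|u-c|^2)^{1/2}$. Constants do not solve $\mathcal{L}u=0$, so the usual chain ``$\sup$-oscillation $\lesssim$ $L^2$-oscillation $\lesssim r\times$ gradient'' is unavailable.

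Overcoming exactly this is the content of the paper's proof of \rref{local boundedness of u}, which splits into two cases:
\begin{itemize}
\item If $4r^2\fint_{Q_{2r}}V>1$, the local Fefferman--Phong inequality \rref{localfeffermanphong} bounds $\sup_{Q_r}|u|$ itself by $Cr(\fint_{Q_{2r}}|\nabla_V u|^2)^{1/2}$.
\item If $4r^2\fint_{Q_{2r}}V\le 1$, one represents $(u-C_Q)\psi$ through the fundamental solution. The extra term $|C_Q|\int_{Q_{2r}}|\Gamma(X'|\xi,\tau)|V(\xi)\,d\xi d\tau$, which comes from $\mathcal{L}C_Q=VC_Q$, is absorbed into $Cr(\fint|\nabla_V u|^2)^{1/2}$, again by Fefferman--Phong.
\end{itemize}
Both cases use the $V^{1/2}u$ component, so the right-hand side must be $\tilN(\nabla_V u)(x)$; your version with $\tilN(\nabla u)$ alone is unjustified. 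Once \rref{local boundedness of u} is available, plain dyadic telescoping inside the cone gives the nontangential limit wherever $\tilN(\nabla_V u)(x)<\infty$. The detour through $\calN(\int_0^t|\partial_\tau u|)$ is not needed.

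Three further points.

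First, the pointwise claim $|h_t|\le C\tilN(\nabla u)$ is false for bounded measurable coefficients. $h_t(y)$ averages $\nbltg u$ only along a vertical segment, and $\nabla u$ need not be locally bounded. What is true is the $L^p$ bound from Lemma \ref{basiclemma} plus H\"older on cubes of side $t$; this is why (i) stops at $p<2+\varepsilon$. You use this correctly for $g_t$.

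Second, for $p\le 1$ your route to $\nbltg u_0\in H^p$ differs from the paper's. The paper never estimates $h_t$ in $H^p$. Letting $t\to 0$ in \rref{local boundedness of u} gives $|u_0(x_1)-u_0(x_2)|\le C(\tilN(\nabla_V u)(x_1)+\tilN(\nabla_V u)(x_2))|x_1-x_2|$. The pointwise characterization of Hardy--Sobolev spaces \cite{HardySobolevPointwise} then yields $\nbltg u_0\in H^p$ at once. Convergence in $\mathcal{S}'$ comes from \rref{weakconvergence1} together with $\|u(\cdot,t)\|_{np/(n-p)}\le C$.

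A uniform grand maximal bound for $h_t$ followed by weak-$\ast$ compactness can also be made to work, and it avoids that black box. However, the roles of the two cases are interchanged in your sketch:
\begin{itemize}
\item For $s\gg t$, a direct estimate only yields an $L^1$ average of $\tilN$, which is useless for $r<1$. You must put the derivative on $\phi_s$ and subtract $u(x,2s)$. You then control the box average $\fint_{Q_{2s}(x,0)}|\partial_\tau u|$ by \cite[Lemma A.2]{MethodOfLayerPotential}, which is where $r\ge\frac{n}{n+1}$ enters, and use \rref{local boundedness of u} at height $2s$.
\item For $s\ll t$, no pointwise bound is available. One uses local $L^2$ bounds on cubes of side $t$ and the subadditivity of $\|\cdot\|_p^p$.
\end{itemize}

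Third, your treatment of $g$ coincides with the paper's $J_1$, $J_2$ argument: testing with $\varphi_s(x-\cdot)\eta(\tau/s)$ for $s<\rho(x)$, H\"older against $V^q$, and \rref{zeta}. As in the $s\gg t$ case, $J_1$ needs the box estimate, not Whitney averages alone.
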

\begin{proof}
    First we need to prove that $u(x)$ is locally Lipschitz continuous in the cone region, namely
    \begin{align}\label{local boundedness of u}
        |u(x_1,t_1)-u(x_2,t_2)|\le C \tilN(\nabla_V u)(x)\brackets{|x_1-x_2|+|t_1-t_2|}
    \end{align}
    for $|x_j-x|<t_j,\,\,j=1,2$. Write $X_j=(x_j,t_j) $, $X^\prime=(x^\prime,t^\prime)$ and set a Whitney cube $Q_r=Q_{r}(X_1)$ where $r=\theta t_1$, and $Q_r=Q_r(x_1)$.  As usual we consider two different case for $r$. 

    \begin{itemize}
        \item [Case 1.] $4r^2\fint_{Q_{2r}} Vdx> 1 $. Taking $C_Q=\Avg{Q_{2r}}u$, by Poincar\'e inequality and \rref{localfeffermanphong} we have
\begin{align*}
    \sup_{Q_{r}}|u|\le& C\brackets{\fint_{Q_{2r}} |u-C_Q|^2}^\frac{1}{2}+C|C_Q|\\
    \le &Cr\brackets{\fint _{Q_{2r}}|\nabla u|^2}^\frac{1}{2}+Cr\brackets{\fint_{Q_{2r}}|\nabla_V u|^2}^\frac{1}{2}\\
    \le &Cr \tilN(\nabla_V u)(x).
\end{align*}
        
        \item [Case 2.]$ 4r^2\fint_{Q_{2r}} Vdx\le  1$. To demonstrate \rref{local boundedness of u} it suffices to show that $ |u(x^\prime,t^\prime)-C_Q|\le  Cr\tilN(\nabla_V u)(x) $
    for all $(x^\prime,t^\prime)\in Q_r$ and a suitable constant $C_Q$.

    Take a bump function $\psi\in C^\infty_c(\rnp) $, $\supp\psi\subset Q_{2r}$ and $\psi|_{Q_{\frac{3}{2}r}}=1$ with $|\nabla\psi|\le \frac{C}{r}$, we have
    \begin{align*}
        &\left|(u(X^\prime)-C_Q)\psi(X^\prime)\rabs\\=&\labs\int_{\rnp_+}\left(A(\xi) \nabla_\xi\Gamma(X^\prime |\xi,\tau) \nabla\psi(\xi,\tau)\cdot(u(\xi,\tau)-C_Q)\right.\right.\\&\left.\left.-\Gamma(X^\prime|\xi,\tau)\brackets{A\nabla u \nabla\psi+C_QV(\xi)\psi}\right) d\xi d\tau\right|\\
        \le & \frac{C}{r}\int_{Q_{2r}}|\nabla_\xi\Gamma(X^\prime |\xi,\tau) |\left|u(\xi,\tau)-C_Q\right|d\xi d\tau+\frac{C}{r}\int_{Q_{2r}}|\Gamma(X^\prime |\xi,\tau)||\nabla u|d\xi d\tau\\&+C|C_Q|\int_{Q_{2r}}|\Gamma(X^\prime |\xi,\tau)| V(\xi) d\xi d\tau\\
        \defeq&I_1+I_2+I_3.
    \end{align*}

    Taking $C_Q=\Avg{Q_{2r}}u$, by Poincar\'e inequality we have
    \begin{align*}
        I_1\le &Cr^{n+1}\brackets{\fint_{Q_{2r}}|\nabla u|^2 }^\frac{1}{2}\brackets{\fint_{Q_{2r}\setminus Q_{\frac{3}{2}r}}|\nabla_\xi \Gamma(X^\prime|\xi,\tau)|^2 d\xi d\tau}^\frac{1}{2}\\
        \le &Cr^{n}\brackets{\fint_{Q_{2r}\setminus Q_{\frac{3}{2}r}}| \Gamma(X^\prime|\xi,\tau)|^2 d\xi d\tau}^\frac{1}{2}\brackets{\fint_{Q_{2r}}|\nabla u|^2 }^\frac{1}{2}\\
        \le &Cr\brackets{\fint_{Q_{2r}}|\nabla u|^2 }^\frac{1}{2}.
    \end{align*}

    For $I_2$ we have 
    \begin{align*}
        I_2 \le &Cr^{n}\brackets{\fint_{Q_{2r}}|\nabla u|^2 }^\frac{1}{2}\brackets{\fint_{Q_{2r}\setminus Q_{\frac{3}{2}r}}| \Gamma(X^\prime|\xi,\tau)|^2 d\xi d\tau}^\frac{1}{2}\\
        \le &Cr\brackets{\fint_{Q_{2r}}|\nabla u|^2 }^\frac{1}{2},
    \end{align*}

    And the term $I_3$ can be dealt with by using \rref{localfeffermanphong}, namely
    \begin{align*}
        I_3\le& C|C_Q| \int_{Q_{2r}}\frac{V(\xi)}{|\xi-x^\prime|^{n-2}}d\xi\\\le& Cr^{2-n}\int_{Q_{2r}}V  d\xi\brackets{\fint_{Q_{2r}}|u|^2}^\frac{1}{2}
        \\\le &Cr\brackets{r^{1-n}\int_{Q_{2r}}V  d\xi d\tau}^\frac{1}{2}\brackets{\fint_{Q_{2r}}Vd\xi \cdot\fint_{Q_{2r}}|u|^2}^\frac{1}{2}
        \\
        \le& Cr\brackets{\fint_{Q_{2r}}|\nabla_V u|^2}^\frac{1}{2}.
    \end{align*}
    \end{itemize}
    
    By \rref{local boundedness of u} we can meaningfully define $u(x,0)$. Its derivative $\nbltg u(x,0)\in L^p(\rn)$ exists as an element in $H^p(\rn)$ as a consequence of \cite{HardySobolevPointwise}, since $u(x,0)$ satisfies \begin{align*}
        |u(x_1,0)-u(x_2,0)|\le C\left(\tilN(\nabla_V u)(x_1)+\tilN(\nabla_V u)(x_2)\right)|x_1-x_2|.
    \end{align*}

    In addition, we notice that
    \begin{align}
        &\int_{\rn} \fint_{t}^{2t} \left(\nbltg ,V^\frac{1}{2}\right)(u(\xi,\tau)-u(\xi,0))h(\xi)d\tau d\xi \nonumber\\=&\int_{\rn} \fint_{t}^{2t} (u(\xi,\tau)-u(\xi,0))\left(-\DIV ,V^\frac{1}{2}\right)h(\xi)d\tau d\xi.\label{weakconvergence1}
    \end{align}

    For $1<p<2+\varepsilon$ it is easy to see that $\lnorm \fint_t^{2t} \nabla_V u d\tau\rnorm_p\le \lnorm \tilN (\nabla_V u)\rnorm_p<\infty$, by invoking the H\"older inequality as well as the reverse H\"older inequality for $|\nabla_V u|$. Thus each sequence $t_k$ has a subsequence $t_{k_l}$ such that $\fint_t^{2t} \nabla_V u d\tau $ converges weakly to some $g\in L^p(\rn)$. Since the right hand side of \rref{weakconvergence1} tends to $0$ when $t\rightarrow 0$ for any fixed $h\in C_c^\infty$, it must be true that $ \fint_t^{2t} \left(\nbltg,V^\frac{1}{2}\right) u d\tau\rightharpoonup \left(\nbltg,V^\frac{1}{2}\right) u(x,0)$ in $L^p$.

    For $\frac{n}{n+1}<p\le 1$, we can see $ \fint_t^{2t} \left(\nbltg,V^\frac{1}{2}\right) u d\tau\rightharpoonup \left(\nbltg,V^\frac{1}{2}\right) u(x,0)$ in the sense of tempered distributions by combining \rref{weakconvergence1} with  $ \|u(\cdot,t)\|_{\frac{np}{n-p}}\le C  $ as a conseqence of \rref{local boundedness of u}.% only need to prove that $ \nbltg u(x,0)\in H^p$.

    Next we consider the conormal derivative when $\frac{n}{n+1}\frac{2q}{2q-1}<p<2+\varepsilon$. For $t\ge 0$ we define a linear functional on $\mathcal{S}$, i.e. a tempered distribution, as 
    \begin{align*}
        F_t(\phi)=\int_t^\infty\int_{\rn} \left(A(\xi)\nabla u(\xi,\tau)\nabla\Phi(\xi,\tau)+V(\xi) u\Phi \right)d\xi d\tau,
    \end{align*}
    where $\Phi(x,t)\in \calS(\rnp)$ such that $\Phi(x,0)=\phi(x)$. By the weak solution property and \rref{integrability of V} we can know that $F_t$ is well-defined. For fixed $\phi$, as $t\rightarrow0$ we can see $F_t \rightarrow F_0$. Indeed, we observe that $\nabla_V u\in L^{p\frac{n+1}{n}}\left(\rnp_+\right)$ for $\frac{n}{n+\delta}<p\le 1$ and $\nabla_V u\in L^p(\rn\times(0,T))$ for any $T>0$ for $1<p<2+\varepsilon$, also $V^{\frac{1}{2}}(1+|x|m_V(0))^{-N}\in L^p$ for every $1\le p\le n+1$ for sufficiently large $N$ according to \rref{integrability of V}, then by taking a specific $\Phi(x,t)=\phi(x)\eta(t)$ with $\eta(t)\in C_c^\infty(\reals)$ and $\eta|_{0\le t\le 1}=1$, we can prove the convergence. Thus the weak convergence when $p>1$ and convergence in distributions when $\frac{n}{n+1}\frac{2q}{2q-1}<p
    \le 1$ can be obtained. Finally we prove that $F_0\in H^p_\mathcal{L}(\rn)$ as a distribution by referring directly to the definition given via $\mathcal{P}_\rho$ as in \rref{definitionofPV}. Consider%$|F_{2t}-F_t|\le Ct$ with $t$ sufficiently small. Thus 
    \begin{align*}
        &\labs \int_{\rn} \varphi_t(x-\xi)\partial_\nu u(\xi,0)dx\rabs\\= &\labs\int_{\rnp_+} \left(A(\xi)\nabla u(\xi,\tau) \nabla \Phi_t(x-\xi,\tau) +V(\xi)u(\xi,\tau)\Phi_t(x-\xi,\tau) \right)d\xi d\tau\rabs\\
        \le &\frac{C}{t^{n+1}}\int_{Q_t(x,0)}|\nabla u(\xi,\tau)|d\xi d\tau+\frac{C}{t^n}\int_{Q_{t}(x,0)} V(\xi)|u(\xi,\tau)|d\xi d\tau\\
    \defeq &J_1+J_2.
    \end{align*}

    For $J_1$, by maximal function inequality we have
    \begin{align}
        J_1\le& C\brackets{ \fint_{Q_t(x,0)}|\nabla u|^\frac{(p-\epsilon)(n+1)}{n}d\xi d\tau}^\frac{n}{(n+1)(p-\epsilon)}\le \frac{C}{t^{\frac{n}{p}}}\brackets{\int_{Q_{ct}(x,0)}\tilN (\nabla_V u)^{p-\epsilon}}^\frac{1}{p-\epsilon}\nonumber\\
        \le &C\brackets{\mathcal{M}\brackets{\tilN (\nabla_V u)^{p-\epsilon}}(x)}^\frac{1}{p-\epsilon}\label{I_1 maximal controlling}
    \end{align}
    for some $\epsilon>0$ such that $p-\epsilon>\frac{n}{n+1}$.

    For $J_2$ we have
    \begin{align}
        J_2\le & \frac{C}{t^n}\brackets{ \int_{Q_t(x,0)}\labs V^\frac{1}{2} u\rabs^{\frac{2q}{2q-1}}d\xi d\tau}^{1-\frac{1}{2q}}\brackets{\int_{Q_t(x,0)}V^q d\xi d\tau}^{\frac{1}{2q}}\nonumber\\
        \le& \frac{C}{t^{(n+1)\brackets{1-\frac{1}{2q}}}}\brackets{\int_{Q_{ct}(x,0)}\tilN (\nabla_V u)^{\frac{2q}{2q-1}\frac{n}{n+1}}}^{\brackets{1-\frac{1}{2q}}\brackets{1+\frac{1}{n}}}\brackets{t^2\fint_{Q_t(x)} Vd\xi }^\frac{1}{2}\nonumber\\
        \le &C\brackets{ \mathcal{M}\brackets{\tilN\left(\nabla_V u\right)^{\frac{2q}{2q-1}\frac{n}{n+1}}}(x)}^{\brackets{1-\frac{1}{2q}}\brackets{1+\frac{1}{n}}}\label{I_2 maximal controlling}
    \end{align}
    since $t<\rho(x)$.
    
    Therefore, choosing $\varepsilon=p-\frac{n}{n+1}\frac{2q}{2q-1}>0$ we have 
    \begin{align*}
        \mathcal{P}_\rho(\partial_\nu u(x,0))\le C\brackets{ \mathcal{M}\brackets{\tilN\left(\nabla_V u\right)^{p-\varepsilon}}(x)}^\frac{1}{p-\varepsilon}
    \end{align*}
    which gives $\lnorm\partial_\nu u(x,0)\rnorm_{ H^p_\mathcal{L}}\le C\lnorm \tilN(\nabla_V u) \rnorm_p$.
    
    Let $\delta=1-\frac{n+1}{2q}>0$, the conclusion (ii) follows.
\end{proof}
\begin{remark}
    Note that the conclusion holds also for coefficient dependent on $t$, as long as $A$ satisfies the uniform elliptic assumption. With slight modification on the proof of $F_t(\phi)\rightarrow 0$ as $t\rightarrow 0$, the conclusion also holds. Moreover, if the coefficient is $t$-independent, $\fint_{t}^{2t}\nbltg u$ can be replaced by $\nbltg u(\cdot,t)$ as the presence of Lemma \ref{basiclemma}.
\end{remark}

%\appendix
\subsection{Regularity of Complex Solutions}\label{Appendix A}

    \begin{theorem}
    Let the complex coefficient $A$ allow the DG-N-M properties \rref{boundednesselliptic} and \rref{holderelliptic} to hold for any weak solution to $ -\DIV(A\nabla w )=0$ and $ -\DIV(A^\ast\nabla w )=0$ in domain $\Omega\in\rn\times\reals$, then for any cube $Q_r=Q_r(X_0)$ whose concentric cube $Q_{\sigma r}=Q_{\sigma r}(X_0)\subset\Omega$ with $\sigma>1$, for any weak solution to $\mathcal{L}u=0$ we have
    \begin{align}
        \sup_{Q_r} |u| &\le {C}_{\sigma} \brackets{\fint_{ Q_{\sigma r}} |u|^2}^\frac{1}{2}\brackets{1+r^2\fint _{Q_{\sigma r}}Vd\xi}^{N_0}\label{degiorginash for schrodinger}\\
         |u(X)-u(Y)| &\le C_\sigma\frac{|X-Y|^\alpha}{r^\alpha} \brackets{\fint_{Q_{\sigma r}} |u|^2}^\frac{1}{2}\brackets{1+r^2\fint _{Q_{\sigma r}}Vd\xi}^{N_0+1}\label{moser for schrodinger}
    \end{align}
    for some constant $N_0>0$ and any $X, Y\in Q_r$, where $\alpha=\min\{\delta_0,\alpha_0\}$. 
\end{theorem}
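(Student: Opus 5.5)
The plan is a perturbation argument: treat $Vu$ as a right-hand side for the operator $-\DIV(A\nabla\cdot)$, for which \rref{boundednesselliptic} and \rref{holderelliptic} are available. Because $V$ lies in the reverse H\"older class with $q>\frac n2$ (and is independent of $t$), the potential is subcritical at small scales. Rescaling to $r=1$ replaces $V$ by $r^2V(r\cdot)$, which has the same $\mathcal{B}_q$ constant; its average over $Q_{\sigma}$ is $\Lambda:=r^2\fint_{Q_{\sigma r}}V$. So it suffices to treat $r=1$ and track the dependence on $\Lambda$ polynomially.

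\textbf{Step 1: local boundedness \rref{degiorginash for schrodinger}.} Fix intermediate cubes $Q_1\subset Q_{s}\subset Q_{s'}\subset Q_\sigma$ and split $u=w+v$ on $Q_{s'}$. Here $w$ solves $-\DIV(A\nabla w)=0$ with $w=u$ on $\partial Q_{s'}$, and $v\in W^{1,2}_0(Q_{s'})$ solves $-\DIV(A\nabla v)=-Vu$. The representation $v(X)=-\int G_{Q_{s'}}(X,Y)V(y)u(Y)\,dY$ uses the Dirichlet Green function of the cube, which satisfies $|G|\lesssim|X-Y|^{1-n}$ by the assumed DG-N-M bounds for $A$ and $A^\ast$ (Gr\"uter--Widman type, valid also in the complex case under those assumptions). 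Since $V$ does not depend on $t$, integrating out $\tau$ gives a kernel $\lesssim|x-y|^{2-n}$. H\"older's inequality with $V\in L^q$, $q>\frac n2$, then yields
\[
\sup_{Q_{s}}|v|\le C\,\|V\|_{L^q(Q_{s'})}\sup_{Q_{s'}}|u|\le C\Lambda\sup_{Q_{s'}}|u|,
\]
where the reverse H\"older inequality bounds the $L^q$ average by the $L^1$ average. \rref{boundednesselliptic} together with energy bounds for $w$ (Caccioppoli with $V\ge0$ for the real part) controls $\sup|w|$ by the $L^2$ average of $u$ plus a term in $\Lambda$. This gives only a weak bootstrap. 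To obtain a polynomial factor $(1+\Lambda)^{N_0}$, I would instead shrink to subcubes of size $\ell\sim(1+\Lambda)^{-1/\delta_0}$. By \rref{zeta}-type scaling, $\ell^2\fint_{Q_\ell}V\lesssim(\ell)^{\delta_0}\Lambda$, which is small, so the $v$-term is absorbed on those small cubes. Covering $Q_1$ by about $\ell^{-(n+1)}$ such cubes and comparing their $L^2$ averages to that on $Q_\sigma$ then loses a factor $\ell^{-(n+1)/2}$, which is polynomial in $1+\Lambda$. An iteration of Moser/Gilbarg--Trudinger type absorbs $\sup_{Q_{s'}}$ on the right (as in the proof of \rref{keyinequality31}).

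\textbf{Step 2: H\"older continuity \rref{moser for schrodinger}.} For $X,Y\in Q_1$ with $|X-Y|=\varrho$, apply the same decomposition on $Q_{2\varrho}(X)$, again at scale at most $\ell$. The $w$-part oscillates by at most $C\varrho^{\alpha_0}$ times its sup, via \rref{holderelliptic}. For the $v$-part, $\sup|v|\lesssim \varrho^{2}\fint_{Q_{2\varrho}}V\cdot\sup|u|\lesssim\varrho^{\delta_0}\Lambda'\sup|u|$, using the Morrey-type decay $\varrho^{2-n/q}$ that follows from $V\in L^q$. A standard Campanato iteration then gives exponent $\min\{\alpha_0,\delta_0\}$, with $\sup|u|$ supplied by Step 1. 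One extra power of $(1+\Lambda)$ appears from the scale restriction, which explains the exponent $N_0+1$.

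\textbf{Main obstacle.} The hardest part will be Step 1's Green-function bound for complex $A$ on the cube and making the absorption quantitative. I would first try to avoid Green functions and use a Moser iteration with test function $|u|^{\beta}\bar u\eta^2$. That fails for complex $A$, however, so the fallback is the Green-function route, justified by the hypothesis that both $A$ and $A^\ast$ have DG-N-M.
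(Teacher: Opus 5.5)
\textbf{The gap is the absorption step in Step 1.} Your bound $\sup_{Q_s}|v|\le C\Lambda\sup_{Q_{s'}}|u|$, and its small-cube version with coefficient $C\ell^{\delta_0}\Lambda$, has $\sup|u|$ on the right-hand side. You propose to remove it with a Gilbarg--Trudinger/Giaquinta absorption lemma, but that lemma applies only to a quantity $f(s)=\sup_{Q_s}|u|$ already known to be finite. In \rref{keyinequality31} the absorbed quantity is an $L^{2(n+1)/(n-1)}$ norm, which is finite a priori by Sobolev. Here, finiteness of $\sup|u|$ is exactly the local boundedness you are trying to prove. As you note yourself, for complex $A$ there is no Moser or truncation argument to supply it, even qualitatively. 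So the argument is circular: shrinking to cubes of size $\ell\sim(1+\Lambda)^{-1/\delta_0}$ makes the coefficient small, but that does not help when both sides may be $+\infty$.

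\textbf{What is missing is a bootstrap that starts from $L^2$.} Treating $Vu$ as a source for $-\DIV(A\nabla\cdot)$ is also the paper's idea; the difference is where the argument starts. The only a priori information is $u\in L^2_{\rm loc}$, and integrability has to be raised in finitely many steps. The paper writes $u\psi$ through the global fundamental solution $\Gamma_0$ and a cutoff $\psi$, so no Dirichlet Green function of a cube is needed.
\begin{itemize}
\item The two terms carrying $\nabla\psi$ live on an annulus away from $X$, and Caccioppoli bounds them by $(\fint|u|^2)^{1/2}$.
\item The term $\int\Gamma_0Vu\psi$ is a fractional integral of order $2$ in $\rnp$. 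It maps $u\in L^p$ to $L^{p_2}$ with $\frac1{p_2}=\frac1p+\frac1q-\frac2{n+1}$, at the cost of one factor $r^2\fint V$.
\item After finitely many steps one reaches $L^\infty$, and the number of steps is $N_0$. No absorption or small-scale covering is needed.
\end{itemize}
This step needs $q>\frac{n+1}{2}$, which holds under the paper's standing assumption together with self-improvement of $\mathcal{B}_q$. So the range $q>\frac n2$ that your integration in $\tau$ suggested is not automatic once the bootstrap is required.

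\textbf{A smaller point in Step 2.} A Campanato iteration with decay $(\varrho/R)^{\alpha_0}$ and error $R^{\delta_0}$ gives exactly $\delta_0$ when $\delta_0<\alpha_0$. When $\alpha_0\le\delta_0$ it gives only exponents strictly below $\alpha_0$, so you do not reach the stated $\alpha=\min\{\alpha_0,\delta_0\}$ in that case. The paper gets $\alpha$ directly by differencing the representation formula in $X$, once $\sup|u|$ is known:
\begin{itemize}
\item away from $X$ (terms $W_1,W_2,W_5$) it uses H\"older continuity of $\Gamma_0$;
\item near $X$ (term $W_4$) it uses the $t$-independence of $V$ to get the bound $\lesssim(h/r)^{\delta_0}r^2\fint_{Q_{3r}}V$.
\end{itemize}
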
 
\begin{proof}
    We follow the method of \cite[Lemma 3.3]{GlobalWeightedHigherOrder}. Set a point $X_0=(x_0,t_0)$ and briefly write $Q_r(X_0)=Q_r$. Take a bump function $\psi\in C^\infty_c(\rnp) $, $\supp\psi\subset Q_{(3+ \sigma )r/4 }(x_0,t_0)$ and $\psi|_{Q_{(7+{\sigma})r/{8}}}=1$ with $|\nabla\psi|\le \frac{C}{(\sigma-1)r}$, concerning $u\psi =L^{-1}_0(-\DIV\nabla (u\psi))$, we have
    \begin{align*}
        &\left| u(X)\psi(X)\rabs\\=&\labs\int_{\rnp_+}\left(A(\xi) \nabla_\xi\Gamma_0(X |\xi,\tau) \nabla\psi(\xi,\tau)\cdot u(\xi,\tau)\right.\right.\\&\left.\left.-\Gamma_0(X|\xi,\tau)\brackets{A\nabla u \nabla\psi+V(\xi)u\psi}\right) d\xi d\tau\right|\\
        \le & \frac{C}{ r}\int_{Q_{(3+ \sigma )r/4}\setminus Q_{(7+ \sigma )r/8}}|\nabla_\xi\Gamma_0(X|\xi,\tau) ||u(\xi,\tau)|d\xi d\tau\\&+\frac{C}{\sigma r}\int_{Q_{(3+ \sigma )r/4}\setminus Q_{(7+ \sigma )r/8}}|\Gamma_0(X|\xi,\tau)||\nabla u|d\xi d\tau\\&+C\int_{Q_{(3+ \sigma )r/4}}|\Gamma_0(X|\xi,\tau)||u(\xi,\tau)| V(\xi) d\xi d\tau\\
        \defeq&U_1(X)+U_2(X)+U_3(X).
    \end{align*}

    By Caccioppoli's inequality, we have
    \begin{align}
        U_1(X)\le& \frac{C}{ r^2}\brackets{\int_{Q_{(1+ \sigma )r/2}\setminus Q_{(15+ \sigma )r/16}}| \Gamma_0(X|\xi,\tau)|^2 d\xi d\tau}^\frac{1}{2}\brackets{\int_{Q_{(3+ \sigma )r/4}}|u|^2}^{\frac{1}{2}}\nonumber\\
        \le & C\brackets{\fint_{Q_{(3+ \sigma )r/4}}|u|^2}^\frac{1}{2}.\label{u1bounded}
    \end{align}
    
    Similarly, we have
    \begin{align}\label{u2bounded}
        U_2(X)\le  C\brackets{\fint_{Q_{(3+ \sigma )r/4}}|u|^2}^\frac{1}{2}.
    \end{align}

    To handle the term $U_3$, first we notice that when $u\in L^p_{\textup{loc}}$ where $p>\frac{q(n+1)}{2q-n-1}$, by H\"older inequality $Vu\in L^{p_1}_{\textup{loc}}$ where $\frac{1}{p_1}=\frac{1}{p}+\frac{1}{q}<\frac{2}{n+1}$. Thus we have
    \begin{align*}
        U_3(X)\le& C r^{n+1}\brackets{\fint_{Q_{(3+\sigma)r/4}} (V|u|)^{p_1}}^\frac{1}{p_1}\brackets{\fint_{Q_{2 r}}\frac{d\xi d\tau}{(|\xi|+|\tau|)^{(n-1)\frac{p_1}{p_1-1}}}}^{1-\frac{1}{p_1}} \\
        \le& Cr^2\brackets{\fint _{Q_{(3+ \sigma )r/4}(x_0)}Vd\xi}\brackets{\fint_{Q_{(3+ \sigma )r/4}}|u|^p}^\frac{1}{p}. 
    \end{align*}

    If $p<\frac{q(n+1)}{2q-n-1}$, by fractional integral formula we have

    \begin{align}
        \brackets{\fint_{Q_{(3+\sigma)r/4}}U_3(X)^{p_2} dX}^\frac{1}{p_2}\le &Cr^{-{\frac{n+1}{p_2}}+\frac{n+1}{p_1}}\brackets{\fint_{Q_{(3+\sigma)r/4}}(V|u|)^{p_1}}^\frac{1}{p_1} \nonumber\\\le & Cr^2\fint_{Q_{(3+\sigma)r/4}(x_0)}Vd\xi\brackets{\fint_{Q_{(3+\sigma)r/4}} |u|^p}^\frac{1}{p},\label{u3iteration}
    \end{align}

    where $\frac{1}{p_2}=\frac{1}{p_1}-\frac{2}{n+1}=\frac{1}{p}+\frac{1}{q}-\frac{2}{n+1}<\frac{1}{p}$.

    Thus we can iterate \rref{u3iteration} together with \rref{u1bounded} and \rref{u2bounded} finite times, elevating $p$ from $2$ to $\infty$ to get \rref{degiorginash for schrodinger}, provided that $\frac{q(n+1)}{4q-2n-2} $ is not an integer. Otherwise we can also adjust $q$ slightly to reach $p=\infty$, thus \rref{degiorginash for schrodinger} holds. 
    
    For the H\"older continuity of $u$, by similar method we have

    \begin{align*}
        &\labs u(X)\psi(X)-u(X_0)\psi(X_0)\rabs\\
        \le & \frac{C}{ r}\int_{Q_{(3+ \sigma )r/4}\setminus Q_{(7+ \sigma )r/8}}\left|\nabla_\xi\left(\Gamma_0(X|\xi,\tau)-\Gamma_0(X_0|\xi,\tau)\right)\right||u(\xi,\tau)|d\xi d\tau\\& +\frac{C}{ r}\int_{Q_{(3+ \sigma )r/4}\setminus Q_{(7+ \sigma )r/8}}|\Gamma_0(X|\xi,\tau)-\Gamma_0(X_0|\xi,\tau)||\nabla u|d\xi d\tau\\&+C\int_{Q_{(3+ \sigma )r/4}}|\Gamma_0(X|\xi,\tau)-\Gamma_0(X_0|\xi,\tau)||u(\xi,\tau)| V(\xi) d\xi d\tau\\
        \defeq&W_1(X)+W_2(X)+W_3(X).
    \end{align*}

    By Caccioppoli's inequality and \rref{holderelliptic} we know that
    \begin{align*}
        W_j(X)\le& \frac{C}{ r^2}\brackets{\fint_{Q_{(3+ \sigma )r/4}\setminus Q_{(7+ \sigma )r/8}}\labs \Gamma_0(X|\xi,\tau)-\Gamma_0(X_0|\xi,\tau)\rabs^2 d\xi d\tau}^\frac{1}{2}\brackets{\fint_{Q_{(3+ \sigma )r/4}} |u|^2}^\frac{1}{2}\\
        \le & C\brackets{\frac{|X-X_0|}{r}}^{\alpha_0} \brackets{\fint_{Q_{(3+ \sigma )r/4}} |u|^2}^\frac{1}{2}
    \end{align*}
    for $j=1$ or $2$. 

    The trickiest part is $W_3$ and we divide it into two parts. Let $h=|X-X_0|<r$, we write

    \begin{align*}
        W_3(X)\le& C\left(\int_{Q_{(1+{\sigma})h/2}}+\int_{Q_{(1+\sigma)r/2}\setminus Q_{(1+\sigma)h/2}}\right)|\Gamma_0(X|\xi,\tau)-\Gamma_0(X_0|\xi,\tau)||u(\xi,\tau)| V(\xi) d\xi d\tau \\
        \defeq&W_4(X)+W_5(X).
    \end{align*}
        
    Choose $\alpha_1=\min\left\{\alpha_0,\delta_0\right\}$ so that $V\in \mathcal{B}_{\frac{n}{2-\alpha_1}}$. Then by the H\"older continuity of $\Gamma_0$, the term $W_5(X)$ can be controlled by
    \begin{align*}
        W_5(X)\le &Ch^{\alpha_1}\int_{Q_{3r}}\frac{V(\xi)}{(|\xi-x_0|+|\tau-t_0|)^{n-1+\alpha_1}}d\xi d\tau\cdot\sup_{Q_{(1+{\sigma})r/2}}|u|\\
        \le &C\brackets{\frac{h}{r}}^{\alpha_1} r^2 \fint_{Q_{3r}}Vd\xi \brackets{\fint_{Q_{\sigma r}}|u|^2}^\frac{1}{2},
    \end{align*}
    where we have used the previous result \rref{degiorginash for schrodinger}.
    
    Finally, the term $W_4$ can be converted into
    \begin{align*}
        W_4(X)\le& C\int_{Q_{3h}}\frac{V(\xi)
        }{(|\xi-x_0|+|\tau-t_0|)^{n-1}}d\xi d\tau\cdot\sup_{Q_{\brackets{(1+{\sigma})}r/{2}}}|u|\\
        \le &C\brackets{\frac{h}{r}}^{\delta_0}r^2\fint_{Q_{3r}}V(\xi)d\xi\brackets{\fint_{Q_{\sigma r}}|u|^2}^\frac{1}{2},
    \end{align*}
    hence the proof of \rref{moser for schrodinger} finishes.
\end{proof}

{\bf Acknowledgment:}\quad 
The  research was supported  by the NNSF
(12471089)  of China.\\

{\bf Declaration:}\quad  On behalf of all authors, the corresponding author states that there is no conflict of interest.

\bibliographystyle{plain}
\bibliography{schrodingerbib}

\medskip

LMAM, School of Mathematical Sciences,
 Peking University, Beijing, 100871,
 P. R. China

 Lin Tang,\quad
 E-mail address:  tanglin@math.pku.edu.cn

Botian Xiao,\quad
 E-mail address:  xbt1828@163.com

\end{document}